%
%
%
%
%
%
%
%
\documentclass[10pt,a4paper,twoside]{article}

\usepackage[a4paper, left=3cm, right=3cm, top = 3cm, bottom = 3cm ]{geometry}
\usepackage{latexsym,array,delarray,amsthm,amssymb,psfrag, dsfont}
\usepackage{amsmath,amscd}
\usepackage{amsthm}
\usepackage[all]{xy}
\usepackage{seqsplit}
\usepackage{enumerate}

\usepackage{stmaryrd}
\usepackage{amsmath}
\usepackage{amsfonts}
\usepackage{mathtools}
\usepackage{txfonts}
\usepackage[usenames,dvipsnames]{xcolor}
\usepackage{hyperref}
\hypersetup{colorlinks=true,citecolor=NavyBlue,linkcolor=Brown,urlcolor=Orange}

\usepackage{tikz-cd}

\usepackage[utf8]{inputenc}
\usepackage{graphicx}
\usetikzlibrary{matrix, calc, arrows}
\usepackage{scalerel}

\newcommand\reallywidehat[1]{\arraycolsep=0pt\relax%
\begin{array}{c}
\stretchto{
  \scaleto{
    \scalerel*[\widthof{\ensuremath{#1}}]{\kern-.5pt\bigwedge\kern-.5pt}
    {\rule[-\textheight/2]{1ex}{\textheight}} 
  }{\textheight} %
}{0.5ex}\\           
#1\\                 
\rule{-1ex}{0ex}
\end{array}
}
\DeclareMathOperator{\age}{age}
\DeclareMathOperator{\CH}{CH}
\DeclareMathOperator{\CHM}{CHM}

\DeclareMathOperator{\DM}{DM}
\newcommand{\h}{\ensuremath\mathfrak{h}}
\DeclareMathOperator{\Hom}{Hom}
\DeclareMathOperator{\orb}{orb}
\DeclareMathOperator{\rk}{rk}

\newtheorem{theorem}{Theorem}[section]
\newtheorem{conj}[theorem]{Conjecture}
\newtheorem{lemma}[theorem]{Lemma}
\newtheorem{proposition}[theorem]{Proposition}

\theoremstyle{definition}
\newtheorem{definition}[theorem]{Definition}

\theoremstyle{remark}
\newtheorem{remark}[theorem]{Remark}
\newtheorem*{theorem*}{Theorem}

\numberwithin{equation}{section}



%
%
%
%
\def\YEAR{\year}\newcount\VOL\VOL=\YEAR\advance\VOL by-1995
\def\firstpage{1}\def\lastpage{1000}
\def\received{}\def\revised{}
\def\communicated{}

\makeatletter
\def\magnification{\afterassignment\m@g\count@}
\def\m@g{\mag=\count@\hsize6.5truein\vsize8.9truein\dimen\footins8truein}
\makeatother

\oddsidemargin1.91cm\evensidemargin1.91cm\voffset1.4cm

\textwidth12.0cm\textheight19.0cm

\font\eightrm=cmr8
\font\caps=cmcsc10                    
\font\Caps=cmcsc10 scaled \magstep1   
\font\scaps=cmcsc8

%


\pagestyle{myheadings}
\pagenumbering{arabic}
\setcounter{page}{\firstpage}

\makeatletter
\setlength\topmargin {14\p@}
\setlength\headsep   {15\p@}  
\setlength\footskip  {25\p@}  
\setlength\parindent {20\p@} 
\@specialpagefalse\headheight=8.5pt
\def\DocMath{{\def\th{\thinspace}\scaps Documenta Math.}}
\renewcommand{\@oddfoot}{\hfill\scaps Documenta Mathematica 
    \number\VOL\  (\number\YEAR) \number\firstpage--\lastpage\hfill}
\renewcommand{\@evenfoot}{\ifnum\thepage>\lastpage\hfill\scaps
    Documenta Mathematica \number\VOL\  (\number\YEAR)\hfill\else\@oddfoot\fi}%

\renewcommand{\@evenhead}{%
    \ifnum\thepage>\lastpage\rlap{\thepage}\hfill%
    \else\rlap{\thepage}\slshape\leftmark\hfill{\caps\SAuthor}\hfill\fi}%
\renewcommand{\@oddhead}{%
    \ifnum\thepage=\firstpage{\DocMath\hfill\llap{\thepage}}%
    \else{\slshape\rightmark}\hfill{\caps\STitle}\hfill\llap{\thepage}\fi}%
\makeatother

\def\TSkip{\bigskip}
\newbox\TheTitle{\obeylines\gdef\GetTitle #1
\ShortTitle  #2
\SubTitle    #3
\Author      #4
\ShortAuthor #5
\EndTitle
{\setbox\TheTitle=\vbox{\baselineskip=20pt\let\par=\cr\obeylines%
\halign{\centerline{\Caps##}\cr\noalign{\medskip}\cr#1\cr}}%
	\copy\TheTitle\TSkip\TSkip%
\def\next{#2}\ifx\next\empty\gdef\STitle{#1}\else\gdef\STitle{#2}\fi%
\def\next{#3}\ifx\next\empty%
    \else\setbox\TheTitle=\vbox{\baselineskip=20pt\let\par=\cr\obeylines%
    \halign{\centerline{\caps##} #3\cr}}\copy\TheTitle\TSkip\TSkip\fi%
\centerline{\caps #4}\TSkip\TSkip%
\def\next{#5}\ifx\next\empty\gdef\SAuthor{#4}\else\gdef\SAuthor{#5}\fi%
\ifx\received\empty\relax
    \else\centerline{\eightrm Received: \received}\fi%
\ifx\revised\empty\TSkip%
    \else\centerline{\eightrm Revised: \revised}\TSkip\fi%
\ifx\communicated\empty\relax
    \else\centerline{\eightrm Communicated by \communicated}\fi\TSkip\TSkip%
\catcode'015=5}}\def\Title{\obeylines\GetTitle}
\def\Abstract{\begingroup\narrower
    \parskip=\medskipamount\parindent=0pt{\caps Abstract. }}
\def\EndAbstract{\par\endgroup\TSkip}

\long\def\MSC#1\EndMSC{\def\arg{#1}\ifx\arg\empty\relax\else
     {\par\narrower\noindent%
     2010 Mathematics Subject Classification: #1\par}\fi}

\long\def\KEY#1\EndKEY{\def\arg{#1}\ifx\arg\empty\relax\else
	{\par\narrower\noindent Keywords and Phrases: #1\par}\fi\TSkip}

\newbox\TheAdd\def\Addresses{\vfill\copy\TheAdd\vfill
    \ifodd\number\lastpage\vfill\eject\phantom{.}\vfill\eject\fi}
{\obeylines\gdef\GetAddress #1
\Address #2 
\Address #3
\Address #4
\EndAddress
{\def\xs{4.3truecm}\parindent=0pt
\setbox0=\vtop{{\obeylines\hsize=\xs#1\par}}\def\next{#2}
\ifx\next\empty 
     \setbox\TheAdd=\hbox to\hsize{\hfill\copy0\hfill}
\else\setbox1=\vtop{{\obeylines\hsize=\xs#2\par}}\def\next{#3}
\ifx\next\empty 
     \setbox\TheAdd=\hbox to\hsize{\hfill\copy0\hfill\copy1\hfill}
\else\setbox2=\vtop{{\obeylines\hsize=\xs#3\par}}\def\next{#4}
\ifx\next\empty\ 
     \setbox\TheAdd=\vtop{\hbox to\hsize{\hfill\copy0\hfill\copy1\hfill}
                \vskip20pt\hbox to\hsize{\hfill\copy2\hfill}}
\else\setbox3=\vtop{{\obeylines\hsize=\xs#4\par}}
     \setbox\TheAdd=\vtop{\hbox to\hsize{\hfill\copy0\hfill\copy1\hfill}
	        \vskip20pt\hbox to\hsize{\hfill\copy2\hfill\copy3\hfill}}
\fi\fi\fi\catcode'015=5}}\gdef\Address{\obeylines\GetAddress}

\hfuzz=0.1pt\tolerance=2000\emergencystretch=20pt\overfullrule=5pt




\def\LOCAL{\jobname.files}

\begin{document}
\Title
Orbifold products for higher K-theory and motivic cohomology
\ShortTitle 
Orbifold products for K-theory and motivic cohomology
\SubTitle   
\Author 
Lie Fu and Manh Toan Nguyen
\ShortAuthor 
\EndTitle
\Abstract 
Due to the work of many authors in the last decades, given an algebraic orbifold (smooth proper Deligne--Mumford stack with trivial generic stabilizer), one can construct its orbifold Chow ring and orbifold Grothendieck ring, and relate them by the orbifold Chern character map, generalizing the fundamental work of Chen--Ruan on orbifold cohomology. In this paper, we extend this theory naturally to higher Chow groups and higher algebraic $K$-theory, mainly following the work of Jarvis--Kaufmann--Kimura and Edidin--Jarvis--Kimura.
\EndAbstract
\MSC 
19E08, 19E15, 14C15, 55N32.
\EndMSC
\KEY 
Orbifold cohomology, K-theory, motivic cohomology, Chow rings, hyper-K\"ahler resolution.
\EndKEY
\Address 
Institut Camille Jordan
Universit\'e Lyon 1
France\\
\&\\
IMAPP
Radboud University
Netherlands
\Address
Institut für Mathematik
Universität Osnabrück
Germany
\Address
\Address
\EndAddress
\setcounter{tocdepth}{1}
\tableofcontents
\section{Introduction}

In their seminal papers \cite{MR1950941} and \cite{MR2104605}, Chen and Ruan constructed the \emph{orbifold cohomology} theory for complex orbifolds. More precisely, given an orbifold $\mathcal{X}$, there is a rationally graded, associative and super-commutative algebra whose underlying vector space is the cohomology of the \emph{inertia orbifold} of $\mathcal{X}$. The highly non-trivial multiplicative structure of the orbifold cohomology ring is  defined using orbifold Gromov--Witten theory, in particular, the construction of the virtual fundamental class of some moduli stack of the so-called \emph{ghost} stable maps, which are stable maps (from orbifoldal curves) of degree 0, thus invisible if the orbifold $\mathcal{X}$ is a manifold.

This striking new theory attracted a lot of interests and was  revisited repeatedly by various mathematicians. In this paper, we restrict our attention to \emph{algebraic orbifolds}, namely, smooth Deligne--Mumford stacks with trivial generic stabilizer and projective coarse moduli space.  On one hand, Fantechi--G\"ottsche \cite{FG1} and Lehn--Sorger \cite{LS} gave a simplification of the construction of the orbifold cohomology in the case of global quotients of projective complex manifolds by finite groups\,; on the other hand, Abramovich--Graber--Vistoli \cite{AGV1} \cite{MR2450211}, based on \cite{MR1862797}, provided a general algebro-geometric construction (\emph{i.e.} in the language of stacks) of the orbifold cohomology ring and actually the orbifold Chow ring \footnote{The work \cite{AGV1} takes care of the general Gromov--Witten theory of smooth Deligne--Mumford stacks, in particular, their quantum cohomology / Chow ring. The case of orbifold cohomology / Chow ring is obtained by simply taking the degree-zero part.}. 

A common feature of the aforementioned works is to construct some obstruction vector bundles by using some moduli space of curves or that of stable maps from curves. In contrast, for global quotients of projective complex manifolds by finite groups, Jarvis--Kaufmann--Kimura \cite{JKK1} furnished a purely combinatorial definition of the \emph{class} of the obstruction vector bundle in the Grothendieck group without appealing to moduli spaces of curves, and hence gave a much more elementary construction of the orbifold theories (cohomology ring, Chow ring, Grothendieck ring, topological K-theory \emph{etc.}). Their construction involves only the fixed loci of the group elements and various normal bundles between them (together with the naturally endowed actions). In a subsequent work, Edidin--Jarvis--Kimura \cite{EJK1} extended the construction in \cite{JKK1} to all smooth Deligne--Mumford stacks which are quotients of projective complex manifolds by linear algebraic groups, using the so-called logarithmic trace and twisted pull-backs (see \S \ref{sect:General}). Let us briefly summarize their results. In the following, we use exclusively rational coefficients\,; $\CH^{*}_{G}(-)$ is the equivariant Chow group of Totaro \cite{Totaro} and Edidin--Graham \cite{EG1}, and $K^{G}_{0}(-):=K_{0}\left([-/G]\right)$ is the equivariant Grothendieck group, namely, the Grothendieck group of the category of $G$-equivariant vector bundles. Here is the main result of \cite{JKK1} and \cite{EJK1}.

\begin{theorem}[Jarvis--Kaufmann--Kimura \cite{JKK1} and  Edidin--Jarvis--Kimura \cite{EJK1}]\label{thm:EJKK}
Let $X$ be a smooth projective complex variety endowed with an action of a linear algebraic group $G$. Denote by $I_{G}(X):=\left\{(g,x)~\vert~gx=x\right\}$ the inertia variety, endowed with a natural $G$-action given by $h.(g,x)=(hgh^{-1}, hx)$ for all $h\in G$ and $(g,x)\in I_{G}(X)$. Assume that the action has finite stabilizer, \emph{i.e.} $I_{G}(X)\to X$ is a finite morphism. Let $\mathcal{X}:=[X/G]$ denote the quotient Deligne--Mumford stack and $I \mathcal{X}: = [I_G(X)/G]$ its inertia stack.
Then 
\begin{enumerate}[$(i)$]
\item On the equivariant Chow group $\CH^{*}_{G}\left(I_{G}(X)\right)$, there is an orbifold product $\star_{c_{\mathbb T}}$, which makes it into a commutative and associative graded ring.
\item The graded ring $\CH^{*}_{G}\left(I_{G}(X)\right)$, endowed with the orbifold product, is independent of the choice of the presentation of the stack $\mathcal{X}$ and coincides with the product defined in Abramovich--Graber--Vistoli \cite{AGV1}. Hence it is called the \emph{orbifold Chow ring} of $\mathcal{X}$ and denoted by $\CH^{*}_{\orb}(\mathcal{X})$. 
\item On the equivariant Grothendieck group $K_{0}^{G}\left(I_{G}(X)\right)=K_{0}(I\mathcal{X})$, there is an orbifold product $\star_{\mathcal{E}_{\mathbb T}}$, which makes it into a commutative and associative ring.
\item The ring $K_{0}^{G}\left(I_{G}(X)\right)$ endowed with the orbifold product is independent of the choice of the presentation of the stack $\mathcal{X}$ and is called the (full) \emph{orbifold Grothendieck ring} of $\mathcal{X}$, denoted by $K^{\orb}_{0}(\mathcal{X})$.
\item There is a natural ring homomorphism with respect to the above orbifold products, called the \emph{orbifold Chern character map}, 
$$\mathfrak{ch}: K_{0}^{G}\left(I_{G}(X)\right)\longrightarrow \CH^{*}_{G}\left(I_{G}(X)\right).$$
It induces an isomorphism 
$$
\mathfrak{ch}: K_0^{G}(I_G(X))^{\wedge} \xrightarrow{\simeq}\CH^*_{G}(I_G(X)),
$$
where the left-hand side is the completion with respect to the augmentation ideal of the representation ring of $G$.
\end{enumerate}
\end{theorem}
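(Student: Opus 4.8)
The plan is to establish the five assertions in four moves: construct the two orbifold products in the global-quotient case after Jarvis--Kaufmann--Kimura, extend everything to a general linear algebraic group after Edidin--Jarvis--Kimura, verify the ring axioms and the comparison statements $(ii)$ and $(iv)$, and finally build the orbifold Chern character and deduce the completion isomorphism. For the finite-group case, write $I_{G}(X)=\coprod_{g\in G}X^{g}$, so $\CH^{*}_{G}(I_{G}(X))=\bigoplus_{g}\CH^{*}_{G}(X^{g})$ and likewise for $K_{0}$. For $g_{1},g_{2}\in G$ put $g_{3}:=(g_{1}g_{2})^{-1}$, $Y:=X^{g_{1}}\cap X^{g_{2}}\cap X^{g_{3}}=X^{\langle g_{1},g_{2}\rangle}$, with the inclusions $e_{1},e_{2}\colon Y\hookrightarrow X^{g_{1}},X^{g_{2}}$ and $e_{12}\colon Y\hookrightarrow X^{g_{1}g_{2}}$ (note $X^{g_{1}g_{2}}=X^{g_{3}}$). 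The central object is the obstruction class $R(g_{1},g_{2})\in K_{0}^{G}(Y)$, defined combinatorially from the normal bundles $N_{X^{g_{i}}/X}|_{Y}$ broken into eigenbundles of the $g_{i}$ together with their fermionic shifts (the logarithmic trace $\overline T_{g}$); concretely $R(g_{1},g_{2}):=\overline T_{g_{1}}|_{Y}\oplus\overline T_{g_{2}}|_{Y}\oplus\overline T_{g_{3}}|_{Y}\ominus\bigl(TX|_{Y}\ominus TY\bigr)$, the precise eigenvalue bookkeeping being that of \cite{JKK1}. The products are then $\alpha\star_{c_{\mathbb T}}\beta:=(e_{12})_{*}\bigl(e_{1}^{*}\alpha\cdot e_{2}^{*}\beta\cdot c_{\mathrm{top}}(R(g_{1},g_{2}))\bigr)$ in the Chow case, and the same formula with the $K$-theoretic Euler class $\lambda_{-1}(R(g_{1},g_{2})^{\vee})$ in the Grothendieck case.

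For a general linear algebraic group $G$ acting with finite stabilizers one cannot split $I_{G}(X)$ into fixed loci of a single order, so I would work directly on $I_{G}(X)$ and its ``double'' $I_{G}^{(2)}(X)=\{(g_{1},g_{2},x)\mid g_{1}x=g_{2}x=x\}$, replacing the eigenbundle decomposition and the $\overline T_{g}$ by the twisted pull-backs and the logarithmic trace recalled in \S\ref{sect:General}, and using Totaro's and Edidin--Graham's equivariant Chow groups (Borel construction via algebraic approximations of $EG$) so that all the above makes sense. The product formula is formally identical, with evaluation and inclusion maps now taken among $I_{G}(X)$ and $I_{G}^{(2)}(X)$, and the obstruction class living over the relevant component of $I_{G}^{(2)}(X)$.

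Commutativity of both products is immediate from the symmetry $R(g_{1},g_{2})\cong R(g_{2},g_{1})$ under swapping $e_{1}\leftrightarrow e_{2}$, together with the fact that $\CH^{*}_{G}$ and $K_{0}^{G}$ are commutative. The grading claim in $(i)$ reduces to $\rk R(g_{1},g_{2})=\age(g_{1})+\age(g_{2})+\age(g_{3})-\operatorname{codim}(Y\hookrightarrow X)$, which follows from the definition of $\overline T_{g}$ and the identity $\age(g)+\age(g^{-1})=\operatorname{codim}(X^{g}\hookrightarrow X)$. Associativity is the technical heart and the step I expect to be the main obstacle: on the triple fixed locus $Z=X^{\langle g_{1},g_{2},g_{3}\rangle}$ one has to prove the $K_{0}$-identity $R(g_{1},g_{2})|_{Z}+R(g_{1}g_{2},g_{3})|_{Z}\ominus N_{1}=R(g_{2},g_{3})|_{Z}+R(g_{1},g_{2}g_{3})|_{Z}\ominus N_{2}$, where $N_{1},N_{2}$ are the excess normal bundles produced by the two ways of composing the Gysin maps (the pairwise fixed loci do not meet transversally inside $Z$); both sides turn out to equal a symmetric four-point class attached to $(g_{1},g_{2},g_{3},(g_{1}g_{2}g_{3})^{-1})$. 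This cocycle computation is carried out by eigenvalue bookkeeping in \cite{JKK1} and through the logarithmic-trace formalism in \cite{EJK1}, and it is exactly where the non-finite case needs genuinely new input. For $(ii)$ and $(iv)$ I would first show presentation-independence: two presentations of $\mathcal{X}$ are linked by a chain of ``no-name'' moves $[X/G]\simeq[(X\times\mathbb{A}^{N})/(G\times\mathrm{GL}_{N})]$ and common refinements, under each of which $I_{G}(X)$, the evaluation maps and the obstruction class transform compatibly (and equivariant Chow groups and $K_{0}$ are themselves presentation-independent). The coincidence with Abramovich--Graber--Vistoli follows once one recalls that $\overline{\mathcal{M}}_{0,3}(\mathcal{X},0)$ is a rigidification of the triple inertia, that the evaluation maps agree, and that a Riemann--Roch / group-cohomology computation on the orbifold $\mathbb{P}^{1}$ with three stacky points identifies the AGV obstruction bundle $R^{1}\pi_{*}f^{*}T\mathcal{X}$ with the combinatorial $R(g_{1},g_{2})$ (this is the other place real work is hidden), whence the two products on $\CH^{*}_{G}(I_{G}(X))$ agree.

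For $(v)$: the ordinary equivariant Chern character $\mathrm{ch}$ is not multiplicative for these products, since it sends $\lambda_{-1}(R(g_{1},g_{2})^{\vee})$ to $c_{\mathrm{top}}(R(g_{1},g_{2}))\cdot\operatorname{Td}(R(g_{1},g_{2}))^{-1}$ rather than to $c_{\mathrm{top}}(R(g_{1},g_{2}))$. I would therefore twist $\mathrm{ch}$ on each inertia component $X^{g}$ by a universal invertible characteristic class $\mathcal{S}_{g}$ of the eigenbundle decomposition of $TX|_{X^{g}}$ (a square root of the relevant Todd-type correction, in the spirit of Toën's Riemann--Roch for Deligne--Mumford stacks, cf. \cite{JKK1,EJK1}), and check that the correction factors $\mathcal{S}_{g_{1}},\mathcal{S}_{g_{2}},\mathcal{S}_{g_{3}}$ together with the $\operatorname{Td}(R(g_{1},g_{2}))$ coming from $\mathrm{ch}$ cancel exactly --- once more an application of the four-point identity on the triple inertia. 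This gives the ring homomorphism $\mathfrak{ch}\colon K_{0}^{G}(I_{G}(X))\to\CH^{*}_{G}(I_{G}(X))$. The last assertion is then formal: the equivariant Riemann--Roch theorem (Edidin--Graham) shows that $\mathrm{ch}$ induces an isomorphism $K_{0}^{G}(I_{G}(X))^{\wedge}\xrightarrow{\simeq}\CH^{*}_{G}(I_{G}(X))$ after completion at the augmentation ideal of the representation ring of $G$, and since each $\mathcal{S}_{g}$ is a unit, so does $\mathfrak{ch}$.
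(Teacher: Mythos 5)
Your proposal follows essentially the same route as the paper, which recalls this theorem from \cite{JKK1} and \cite{EJK1} and reruns the identical argument in \S4--5 for the higher analogues: the combinatorial obstruction class built from logarithmic traces $\Im_g$ with $\mathcal{R}(g_1,g_2)=\Im_{g_1}+\Im_{g_2}+\Im_{(g_1g_2)^{-1}}-[N_{X^{\mathbf g}}X]$, the product as pushforward of an Euler class, associativity via the cocycle identity on the triple fixed locus, presentation independence by the double-filtration/Morita argument, and the Todd-twisted Chern character $\mathfrak{ch}(x_g)=\mathbf{ch}(x_g)\mathbf{td}^{-1}(\Im_g)$ combined with the Atiyah--Segal completion theorem. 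Two small corrections: in your associativity identity the excess normal bundles must be \emph{added} to the same side as the obstruction classes (their Euler classes enter as extra multiplicative factors from the excess intersection formula), i.e. $\mathcal{R}(g_1,g_2)|_Z+\mathcal{R}(g_1g_2,g_3)|_Z+[E_{1,2}]=\mathcal{R}(g_1,g_2g_3)|_Z+\mathcal{R}(g_2,g_3)|_Z+[E_{2,3}]$, both sides equalling the symmetric four-point class as you say; and the multiplicativity of $\mathfrak{ch}$ is verified on the double (not triple) inertia, using the relation $\Im_g+\sigma^*\Im_{g^{-1}}=[N_{X^g}X]$ rather than the four-point identity.
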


The work \cite{JKK1} treated the situation where $G$ is a finite group. In that case, $\CH^{*}_{G}\left(I_{G}(X)\right)$ and $K_{0}^{G}\left(I_{G}(X)\right)^{\wedge}$ are simply the $G$-invariant parts of the larger spaces $\CH^{*}\left(I_{G}(X)\right)$ and $K_{0}\left(I_{G}(X)\right)$ respectively, where the orbifold products are  already defined, giving rise to the so-called \emph{stringy Chow / Grothendieck rings}.

\subsection{Orbifold higher Chow ring and higher K-theory}

The first main purpose of this article is to extend the work of Jarvis--Kaufmann--Kimura \cite{JKK1} and  Edidin--Jarvis--Kimura \cite{EJK1} for Bloch's \emph{higher} Chow groups \cite{Bloch1} (or equivalently, the motivic cohomology \cite{Voevodsky1}) and for Quillen's \emph{higher} algebraic K-theory \cite{DQ1}, by proving the following analogue of Theorem \ref{thm:EJKK}. Similarly as before, $\CH^{*}_{G}(-, \bullet)$ is the rational equivariant higher Chow group of Edidin--Graham \cite{EG1} (see \S\ref{subsect:EquiChow}), and $K^{G}_{\bullet}(-):=K_{\bullet}\left([-/G]\right)$ is the rational equivariant algebraic K-theory (see \S\ref{subsect:EquiK}), namely, Quillen's K-theory \cite{DQ1} of the exact category of $G$-equivariant vector bundles.

\begin{theorem}\label{thm:main1}
Assumptions and notations are as in Theorem \ref{thm:EJKK}.
\begin{enumerate}[$(i)$]
\item On the equivariant higher Chow group $\CH^{*}_{G}\left(I_{G}(X), \bullet\right)$, there is an orbifold product $\star_{c_{\mathbb T}}$, which makes it into a (graded) commutative and associative bigraded ring.
\item The bigraded ring $\CH^{*}_{G}\left(I_{G}(X), \bullet\right)$, endowed with the orbifold product, is independent of the choice of the presentation of the stack $\mathcal{X}$. 
\item On the equivariant algebraic K-theory $K_{\bullet}^{G}\left(I_{G}(X)\right)$, there is an orbifold product $\star_{\mathcal{E}_{\mathbb T}}$, which makes it into a (graded) commutative and associative graded ring.
\item The graded ring $K_{\bullet}^{G}\left(I_{G}(X)\right)$ endowed with the orbifold product is independent of the choice of the presentation of the stack $\mathcal{X}$.
\item There is a natural graded ring homomorphism with respect to the orbifold products, called the \emph{orbifold (higher) Chern character map}, 
$$\mathfrak{ch}: K_{\bullet}^{G}\left(I_{G}(X)\right)\longrightarrow \CH^{*}_{G}\left(I_{G}(X), \bullet\right).$$
It induces an isomorphism 
$$
\mathfrak{ch}: K_\bullet^{G}(I_G(X))^{\wedge} \xrightarrow{\simeq}\CH^*_{G}(I_G(X), \bullet),
$$
where the left-hand side is the completion with respect to the augmentation ideal of the representation ring of $G$.
\end{enumerate}
\end{theorem}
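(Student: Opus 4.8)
The plan is to reduce Theorem~\ref{thm:main1} to Theorem~\ref{thm:EJKK} by upgrading the constructions of Jarvis--Kaufmann--Kimura and Edidin--Jarvis--Kimura from the ``degree-zero'' invariants ($\CH^*_G$, $K_0^G$) to the full graded theories ($\CH^*_G(-,\bullet)$, $K_\bullet^G$), exploiting the fact that both higher Chow groups and higher $K$-theory are \emph{modules} over their degree-zero counterparts. The key observation is that every ingredient in the orbifold product --- the decomposition of $I_G(X)$ into ``twisted sectors'', the excess/obstruction class $c_{\mathbb T}$ (resp.\ $\mathcal{E}_{\mathbb T}$) living in $\CH^*_G$ (resp.\ $K_0^G$) of the appropriate double inertia space, the twisted pull-backs, the logarithmic trace, and the push-forward along the (finite, hence proper, and lci after suitable hypotheses) evaluation maps --- is a morphism or class that already exists at the level of the classical theory. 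What one needs is that these operations extend compatibly to the higher theories.

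\smallskip

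\noindent\textbf{Step 1: Functoriality of equivariant higher Chow groups and higher $K$-theory.} First I would record, in \S\ref{subsect:EquiChow} and \S\ref{subsect:EquiK}, that $\CH^*_G(-,\bullet)$ and $K_\bullet^G(-)$ are contravariant for lci (in particular flat, and in particular regular) morphisms, covariant for proper morphisms, carry a projection formula, satisfy the self-intersection / excess-intersection formula, and admit an action of $\CH^*_G$ (resp.\ $K_0^G$) by cap product (resp.\ multiplication by $K$-theory classes, via the module structure of $K_\bullet$ over $K_0$). All of these are standard: for higher Chow groups they are in Bloch and in Edidin--Graham; for higher $K$-theory they follow from Quillen's localization, d\'evissage, and the projective bundle formula, together with Thomason's equivariant refinements. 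The point is simply to have these tools stated equivariantly and with rational coefficients.

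\smallskip

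\noindent\textbf{Step 2: Define the orbifold products.} The orbifold product on $\CH^*_G(I_G(X),\bullet)$ is defined by the \emph{same} formula as in \cite{JKK1},\cite{EJK1}, namely for $\alpha,\beta$ one sets
\[
\alpha \star_{c_{\mathbb T}} \beta \;=\; (e_3)_*\!\left( e_1^*\alpha \cdot e_2^*\beta \cdot c_{\mathbb T} \right),
\]
where $e_1,e_2,e_3$ are the three evaluation maps from the double inertia space, $c_{\mathbb T}\in\CH^*_G$ is the Euler class of the obstruction bundle constructed in \cite{EJK1} via the logarithmic trace, and the product $\cdot$ is the cap-product module action from Step~1; similarly on $K_\bullet^G$ with $\mathcal{E}_{\mathbb T}\in K_0^G$. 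Because $e_1^*,e_2^*$ are pull-backs along regular (lci) morphisms, $e_3$ is proper (the finiteness hypothesis on the stabilizer guarantees the evaluation maps are finite after restricting to components, hence proper and lci), and $c_{\mathbb T}$ (resp.\ $\mathcal{E}_{\mathbb T}$) is a fixed class in the degree-zero theory, the right-hand side is well defined in the higher theory.

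\smallskip

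\noindent\textbf{Step 3: Associativity and commutativity.} Here is where the main work lies, and it is the step I expect to be the main obstacle. In the classical case, associativity is proved in \cite{JKK1},\cite{EJK1} by an identity among the obstruction classes $c_{\mathbb T}$ (resp.\ $\mathcal{E}_{\mathbb T}$) on a triple inertia space, together with a diagram chase using base change, the projection formula, and the excess-intersection formula on a commutative square of evaluation maps. Every diagram, every morphism, and every class in that argument lives in the classical theory; the only manipulations performed on the \emph{variable} classes $\alpha,\beta,\gamma$ are: pull-back along lci maps, push-forward along proper maps, base change, projection formula, and multiplication by fixed classes from the degree-zero theory. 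Since, by Step~1, all of these operations are available verbatim in the higher setting and satisfy the same formal identities (this is exactly the statement that higher Chow / higher $K$-theory is an oriented cohomology-like theory, or a module spectrum over the degree-zero one), the associativity and (graded-)commutativity proofs transcribe line-by-line. The only genuinely new bookkeeping is the cohomological/simplicial degree $\bullet$: one must check that the Koszul sign in graded-commutativity is consistent (it is, because the simplicial degree is additive under the cup product on higher Chow groups and under the product on higher $K$-theory, and the twisted-sector shifts $\age$ sit only in the Chow-degree, not the simplicial degree). I would present this as: ``the proof of \cite[Thm.~X]{EJK1} applies mutatis mutandis, the only modification being to carry the extra simplicial grading, which is inert under all operations used.''

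\smallskip

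\noindent\textbf{Step 4: The Chern character and the completion isomorphism.} For $(v)$ I would define $\mathfrak{ch}$ sector-by-sector as the classical orbifold Chern character twisted by the appropriate ``$\mathcal{S}$-class'' (the class $\operatorname{ch}$ composed with multiplication by a fixed unit coming from the eigenbundle decomposition of the normal bundle of a twisted sector), using that the ordinary higher Chern character $K_\bullet^G(-)\to\CH^*_G(-,\bullet)$ is a ring homomorphism --- this is classical for higher $K$-theory (Bloch, Gillet, Soul\'e, Levine) and equivariantly for Thomason-type theories --- and is natural for pull-back and satisfies the Grothendieck--Riemann--Roch formula for proper push-forward along lci maps, which is precisely what makes $\mathfrak{ch}$ multiplicative for the orbifold products given the definitions in Step~2. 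That $\mathfrak{ch}$ is multiplicative then follows formally from GRR applied to $e_3$ together with the compatibility $\operatorname{ch}(\mathcal{E}_{\mathbb T})\cdot\mathrm{Td}(\cdots) = $ (the relevant correction) $\cdot\,c_{\mathbb T}$, which is exactly the identity already established in \cite{JKK1},\cite{EJK1} in the degree-zero theory and which does not involve the variable classes at all. Finally, the completion isomorphism $\mathfrak{ch}\colon K_\bullet^G(I_G(X))^\wedge \xrightarrow{\ \simeq\ }\CH^*_G(I_G(X),\bullet)$ is \emph{not} new content but a known statement about ordinary equivariant higher $K$-theory versus higher Chow groups (the equivariant Riemann--Roch / Atiyah--Segal completion theorem in the higher setting, due to Edidin--Graham and Thomason, with rational coefficients applied to the smooth variety $I_G(X)$ with its $G$-action and finite stabilizers); I would cite it and note that, since both sides carry the orbifold products defined from the \emph{same} classes $\mathcal{E}_{\mathbb T}$, $c_{\mathbb T}$ and the \emph{same} evaluation maps, the underlying-additive-isomorphism statement upgrades to a ring isomorphism once multiplicativity of $\mathfrak{ch}$ (just proved) is known. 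I expect the only subtlety to be checking that completion commutes with the push-forward $ (e_3)_*$ appearing in the product --- this holds because $e_3$ is finite, so $(e_3)_*$ is already defined before completion and the completion is exact on finitely generated modules over the (Noetherian) representation ring.
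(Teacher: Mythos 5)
Your proposal follows essentially the same route as the paper: define the products by the Edidin--Jarvis--Kimura formula with the twisted-pullback obstruction class, transcribe the associativity and commutativity arguments of \cite{JKK1} and \cite{EJK1} verbatim using the higher (equivariant) projection and excess-intersection formulas, and obtain the multiplicativity of the orbifold Chern character and the completion isomorphism from equivariant Grothendieck--Riemann--Roch. The only caveats are bibliographic rather than mathematical: the excess-intersection formula for higher Chow groups is not in Bloch but is deduced in the paper from D\'eglise's Gysin-triangle results via the comparison with motivic cohomology, and the higher equivariant Atiyah--Segal completion theorem invoked is Krishna's, not Edidin--Graham's or Thomason's.
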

Here the ring $\CH^{*}_{G}\left(I_{G}(X), \bullet\right)$ (\emph{resp.} $K_{\bullet}^{G}\left(I_{G}(X)\right)$) is called the \emph{orbifold higher Chow ring} (\emph{resp.} \emph{orbifold K-theory}) of the stack $\mathcal{X}$ and denoted by $\CH^{*}_{\orb}(\mathcal{X}, \bullet)$ (\emph{resp.} $K^{\orb}_{\bullet}(\mathcal{X})$).
As in \cite{JKK1}, when $G$ is finite, the orbifold products on $\CH^{*}_{G}\left(I_{G}(X), \bullet\right)$ and $K_{\bullet}^{G}\left(I_{G}(X)\right)^{\wedge}$ extend to the larger spaces $\CH^{*}\left(I_{G}(X), \bullet\right)$ and $K_{\bullet}\left(I_{G}(X)\right)$ respectively, giving rise to \emph{stringy motivic cohomology} and \emph{stringy K-theory}, see \S \ref{sect:FiniteGroupQuotient} for the details.

\subsection{Orbifold motives}
In the first author's joint work with Zhiyu Tian and Charles Vial \cite{MHRCKummer}, for a global quotient of a smooth projective variety by a finite group, its \emph{orbifold Chow motive} is constructed, following the strategy of \cite{JKK1}, as a commutative algebra object in the category of rational Chow motives $\CHM_{\mathbb Q}$.

We construct here for any Deligne--Mumford stack $\mathcal{X}$ which is the global quotient of a smooth projective variety by a linear algebraic group, an algebra object $M_{\orb}(\mathcal{X})$ in the category of mixed motives with rational coefficients $\DM_{\mathbb Q}$. Precisely, it is the motive of its inertia stack $M(\mathcal{I}_\mathcal{X})$ together with an algebra structure given by the orbifold product. See \S \ref{subsect:OrbMot} for some details.

\subsection{Hyper-K\"ahler resolution conjectures}
Inspired by the topological string theory, one of the most important motivations (proposed by Ruan \cite{MR2234886}) to introduce these orbifold theories of a Deligne--Mumford stack is to relate it to the corresponding ``ordinary'' theories of the (crepant) resolutions of the singular coarse moduli space. More precisely, we have the following series of Hyper-K\"ahler Resolution Conjectures (HRC). Recall that a projective manifold is called \emph{hyper-K\"ahler} if it admits a holomorphic symplectic 2-form\footnote{The manifolds thus defined should rather be called \emph{holomorphic symplectic}, and hyper-K\"ahler varieties (also known as \emph{irreducible holomorphic symplectic varieties}) in the literature are the simply-connected holomorphic symplectic varieties such that symplectic form is unique up to scalar, see \cite{MR730926}, \cite{MR1664696}, \cite{MR1963559} \emph{etc.} However, in this paper we will abuse slightly the language and call holomorphic symplectic varieties hyper-K\"ahler.}.
\begin{conj}[Hyper-K\"ahler Resolution Conjectures]\label{conj:HRC}
Let $\mathcal{X}$ be a smooth algebraic orbifold, with coarse moduli space $|\mathcal{X}|$. Suppose there is a crepant resolution $Y\to |\mathcal{X}|$ with $Y$ being hyper-K\"ahler, then we have 
\begin{enumerate}[$(i)$]
\item \textit{(Cohomological HRC \cite{MR2234886})} an isomorphism of graded commutative $\mathbb{C}$-algebras\,: $$H^*(Y, \mathbb{C})\simeq H^*_{\orb}(\mathcal{X},\mathbb{C}).$$
\item \textit{(K-theoretic HRC \cite{JKK1})} an isomorphism of commutative $\mathbb{C}$-algebras\,: $$K_{0}(Y)_{\mathbb{C}}\simeq {K^{\orb}_{0}}(\mathcal{X})_{\mathbb{C}}^{\wedge}.$$
\item \textit{(Chow-theoretic HRC \cite{MHRCKummer})} an isomorphism of commutative graded $\mathbb{C}$-algebras\,: $$\CH^{*}(Y)_{\mathbb{C}}\simeq \CH_{\orb}^{*}(\mathcal{X})_{\mathbb{C}}.$$
\item \textit{(Motivic HRC \cite{MHRCKummer})} an isomorphism of commutative algebra objects in the category of complex mixed motives $\DM_{\mathbb{C}}$\,: $$M(Y)\simeq M_{\orb}(\mathcal{X}).$$
\end{enumerate}
\end{conj}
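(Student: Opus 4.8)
\emph{The plan} is to prove the motivic statement $(iv)$ and to deduce $(i)$, $(ii)$, $(iii)$ from it (we assume throughout that $\mathcal X$ is presented as a global quotient of a smooth projective variety by a linear algebraic group, so that the orbifold invariants of the previous sections, and in particular $M_{\orb}(\mathcal X)$, are available). For the deductions: Betti realization $\DM_{\mathbb C}\to D^{b}(\mathbb C)$ is a monoidal functor, sending the algebra object $M(Y)$ to the cochain complex of $Y$ with its cup product and the algebra object $M_{\orb}(\mathcal X)$ to a complex computing $H^{*}_{\orb}(\mathcal X,\mathbb C)$ with the Chen--Ruan product --- the latter by the very construction of $M_{\orb}(\mathcal X)$ in \S\ref{subsect:OrbMot}, whose defining correspondences, together with the top Chern class of the obstruction bundle, realize exactly to the maps and the Euler class entering the definition of the Chen--Ruan product. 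Hence $(iv)\Rightarrow(i)$. Applying instead the lax-monoidal functor $M\mapsto\bigoplus_{i}\Hom_{\DM_{\mathbb C}}\!\bigl(\1,M(i)[2i]\bigr)$, which sends $M(Y)$ to $\CH^{*}(Y)_{\mathbb C}$ with intersection product and $M_{\orb}(\mathcal X)$ to $\CH^{*}_{\orb}(\mathcal X)_{\mathbb C}$ with the orbifold product, gives $(iv)\Rightarrow(iii)$. Finally, $(iii)$ combined with the orbifold Chern character isomorphism $K^{\orb}_{0}(\mathcal X)^{\wedge}_{\mathbb C}\xrightarrow{\ \simeq\ }\CH^{*}_{\orb}(\mathcal X)_{\mathbb C}$ of Theorem \ref{thm:EJKK}$(v)$ (equivalently Theorem \ref{thm:main1}$(v)$ with $\bullet=0$) and the classical Chern character ring isomorphism $K_{0}(Y)_{\mathbb C}\xrightarrow{\ \simeq\ }\CH^{*}(Y)_{\mathbb C}$, valid for the smooth projective $Y$, yields $(iii)\Rightarrow(ii)$. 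So everything reduces to $(iv)$.

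\emph{Additive part of $(iv)$.} Let $\pi\colon Y\to|\mathcal X|$ be the crepant resolution. Since $Y$ is holomorphic symplectic and $\pi$ is crepant, $|\mathcal X|$ is a symplectic variety, so the ages of the group elements $g$ are integers and Tate twists by the age make sense; moreover $\pi$ is semismall. Stratifying $|\mathcal X|$ by its symplectic leaves and applying the motivic decomposition theorem for semismall morphisms (in the spirit of de Cataldo--Migliorini, in its motivic form), I would obtain a decomposition of $M(Y)$ into summands indexed by the strata, the summand attached to a stratum $S$ being built from the motive of the closure $\overline S$ twisted by a power of the Lefschetz motive $\mathbb L$ equal to half the dimension of the generic fibre of $\pi$ over $S$; comparing this with the structure of the inertia stack identifies it with the sector decomposition $M(\mathcal I_{\mathcal X})=\bigoplus_{[g]}M\!\bigl(\overline{\mathcal X^{g}}\bigr)(\age(g))$, hence an isomorphism of motives $M(Y)\simeq M_{\orb}(\mathcal X)$ (forgetting algebra structures). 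An equivalent route is the motivic McKay correspondence of Batyrev and Denef--Loeser: the stringy motive of $|\mathcal X|$ is independent of the chosen crepant resolution, so it equals $M(Y)$, while its orbifold expression is $\sum_{[g]}M\!\bigl(\overline{\mathcal X^{g}}\bigr)\,\mathbb L^{\age(g)}$.

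\emph{Multiplicative part of $(iv)$.} The substantial step is to upgrade this to an isomorphism of algebra objects, matching the intersection product on $M(Y)$ (the small-diagonal correspondence) with the orbifold product on $M(\mathcal I_{\mathcal X})$ (the sector correspondences twisted by the obstruction bundles $\mathcal R$). Here the hyper-K\"ahler hypothesis is essential, following Ruan's heuristic and its incarnations for Hilbert schemes (Lehn--Sorger) and for generalized Kummer varieties: the holomorphic symplectic form identifies, on each triple sector $\mathcal X^{g_{1}}\cap\mathcal X^{g_{2}}$, the obstruction bundle with ``half'' of the relevant excess normal bundle, so its top Chern class is forced to coincide with the excess-intersection correction produced on $Y$ by the exceptional locus of $\pi$. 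Concretely, I would argue \'etale-locally on $|\mathcal X|$: near the generic point of a codimension-two symplectic stratum the local model is a du Val surface singularity $\mathbb C^{2}/\Gamma$ with $\Gamma\subset SL_{2}(\mathbb C)$, where the required identity is the classical two-dimensional McKay correspondence (twisted sectors are points, and the orbifold product reproduces the $ADE$ intersection form on the exceptional $\mathbb P^{1}$'s); for higher-codimension strata one uses the (formal or analytic) local normal form of a symplectic singularity admitting a symplectic resolution to reduce to products of such basic models, checks $(iv)$ on each model, and then globalises by a compatibility (``product formula'') argument patching the local comparisons, exactly as is done in the symmetric-product and generalized-Kummer cases.

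\emph{Main obstacle.} The hard part is precisely this multiplicative, local-to-global step: there is at present no classification of symplectic singularities admitting a symplectic resolution that is uniform enough to carry out the reduction to basic local models and the subsequent patching in complete generality, and the way the obstruction-bundle contributions coming from symplectic strata of different codimensions fit together --- which is the entire content of the conjecture --- is understood only in the families for which an explicit combinatorial or Nakajima-type model is available. Even the additive decomposition in $\DM_{\mathbb C}$, as opposed to its Hodge or $\ell$-adic realization, is delicate, since it requires controlling motivically the local systems attached to the strata of $\pi$.
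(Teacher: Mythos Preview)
The statement you are attempting to prove is presented in the paper as a \emph{conjecture}, not as a theorem; the paper makes no claim to establish it in general. What the paper does prove is precisely the chain of implications you record in your first paragraph: Lemma~\ref{lemma:KChowEquiv} gives $(iii)\Leftrightarrow(ii)$ via the orbifold Chern character, and Proposition~\ref{prop:MotiveImpliesChow} gives $(iv)\Rightarrow(iii)$ (and its strengthened version) by applying $\Hom_{\DM}\bigl(-,\mathbb C(*)[2*-\bullet]\bigr)$; the implication $(iv)\Rightarrow(i)$ is attributed to \cite{MHRCKummer}. Your derivation of these implications is correct and matches the paper's arguments essentially verbatim.

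Your attempt at $(iv)$ itself, however, goes well beyond anything the paper does or claims. The paper establishes $(iv)$ only in the four specific families listed in Theorem~\ref{thm:main2} --- Hilbert schemes of abelian and K3 surfaces, generalized Kummer varieties, and two-dimensional orbifolds with isolated stacky points --- and it does so not by any argument resembling your ``additive part plus multiplicative part'' sketch, but simply by invoking the earlier papers \cite{MHRCKummer}, \cite{MHRCK3}, \cite{McKaySurface}, where $(iv)$ is verified case by case using the explicit combinatorics of symmetric products and the Lehn--Sorger/Nakajima description of Hilbert schemes. Your own ``Main obstacle'' paragraph correctly diagnoses why your general strategy cannot currently be completed: there is no usable classification of symplectic singularities admitting symplectic resolutions, a motivic decomposition theorem of the required strength is not available, and the local-to-global patching of obstruction-bundle contributions across strata of different codimensions is exactly the unresolved content of the conjecture. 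In short, you have outlined a plausible attack on an open problem, not a proof, and the paper does not pretend otherwise.
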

Note that in Conjecture \ref{conj:HRC}, $(ii)$ and $(iii)$ are equivalent by the Chern character map defined in \cite{JKK1} (see $(v)$ of Theorem \ref{thm:EJKK}), and the motivic version $(iv)$ implies the others \cite{MHRCKummer}.

With orbifold higher Chow rings and orbifold higher algebraic K-theory being defined in this paper (Theorem \ref{thm:main1}), we now propose to complete Conjecture \ref{conj:HRC} by including the ``higher'' invariants in $(ii)$ and $(iii)$\,:

\begin{conj}[Hyper-K\"ahler Resolution Conjectures: strengthened] \label{conj:HRC+}
Hypotheses and conclusions are as in Conjecture \ref{conj:HRC}, except that $(ii)$ and $(iii)$ are respectively replaced by\\
$(ii)^{+}$ (K-theoretic HRC) an isomorphism of commutative graded $\mathbb{C}$-algebras\,: $$K_{\bullet}(Y)_{\mathbb{C}}\simeq K^{\orb}_{\bullet}(\mathcal{X})^{\wedge}_{\mathbb{C}}.$$
$(iii)^{+}$ \textit{(Chow-theoretic HRC)} an isomorphism of commutative bigraded $\mathbb{C}$-algebras\,: $$\CH^{*}(Y, \bullet)_{\mathbb{C}}\simeq \CH_{\orb}^{*}(\mathcal{X}, \bullet)_{\mathbb{C}}.$$
\end{conj}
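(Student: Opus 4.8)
Although Conjecture~\ref{conj:HRC+} is, like Conjecture~\ref{conj:HRC}, open in general, it reduces to the motivic statement Conjecture~\ref{conj:HRC}$(iv)$ exactly as the earlier conjecture did, and is therefore a theorem in every case where $(iv)$ is known --- for instance for the Hilbert schemes of points on an abelian or K3 surface and for the generalized Kummer varieties, by \cite{MHRCKummer}. The engine of this reduction is corepresentability: for a smooth projective variety $Y$ one has $\CH^{p}(Y,q)_{\mathbb C}\cong\Hom_{\DM_{\mathbb C}}\bigl(M(Y),\mathbb C(p)[2p-q]\bigr)$, and the same formula with $M(\mathcal{I}_\mathcal{X})$ in place of $M(Y)$ computes the orbifold higher Chow groups $\CH^{p}_{\orb}(\mathcal X,q)_{\mathbb C}$. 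The point I would pin down first is that the ring structure these $\Hom$-groups inherit from the algebra object $M_{\orb}(\mathcal X)$ coincides with the orbifold product of Theorem~\ref{thm:main1}$(i)$: both are induced by the same obstruction-bundle-twisted correspondence on the inertia stack that defines the orbifold product, so this is part of the construction recalled in \S\ref{subsect:OrbMot} rather than a fresh computation.

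Granting this, $(iii)^+$ is immediate: apply $\Hom_{\DM_{\mathbb C}}\bigl(-,\mathbb C(p)[2p-q]\bigr)$ to the isomorphism of algebra objects $M(Y)\simeq M_{\orb}(\mathcal X)$ furnished by $(iv)$, and sum over the bidegree $(p,q)$; functoriality of $\Hom$ with respect to the algebra structures turns the resulting bijection into a bigraded $\mathbb C$-algebra isomorphism $\CH^{*}(Y,\bullet)_{\mathbb C}\simeq\CH^{*}_{\orb}(\mathcal X,\bullet)_{\mathbb C}$. For $(ii)^+$ I would combine this with the Adams (weight) decomposition of rational higher $K$-theory, $K_{n}(-)_{\mathbb C}\cong\bigoplus_{p}\CH^{p}(-,2p-n)_{\mathbb C}$, and with the orbifold higher Chern character of Theorem~\ref{thm:main1}$(v)$, which already identifies $K^{\orb}_{\bullet}(\mathcal X)^{\wedge}_{\mathbb C}$ with $\CH^{*}_{\orb}(\mathcal X,\bullet)_{\mathbb C}$ as graded rings; chaining these identifications with the classical Chern character $K_{\bullet}(Y)_{\mathbb C}\xrightarrow{\sim}\bigoplus_{p}\CH^{p}(Y,2p-\bullet)_{\mathbb C}$ yields $K_{\bullet}(Y)_{\mathbb C}\simeq K^{\orb}_{\bullet}(\mathcal X)^{\wedge}_{\mathbb C}$. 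The completion on the orbifold side is forced by Theorem~\ref{thm:main1}$(v)$ and is the exact analogue of the completion already present in Conjecture~\ref{conj:HRC}$(ii)$; no completion is needed on $Y$, which is an honest variety.

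The main obstacle is the input, not the deduction: $(iv)$ is established only in the handful of situations where a hyper-K\"ahler crepant resolution exists and arises from a global quotient, so the argument above proves $(ii)^+$ and $(iii)^+$ unconditionally only there, and for a general such resolution the two statements remain as genuinely open as $(iv)$ itself. A minor technical wrinkle is that the step computing higher $K$-theory runs through the Adams decomposition, which exists only after $\otimes\,\mathbb Q$, and that the $K$-theory product is filtered rather than split; both issues are handled exactly as in the $\bullet=0$ case recalled just after Conjecture~\ref{conj:HRC}, where $(ii)$ and $(iii)$ are already seen to be equivalent via the classical Chern character of Theorem~\ref{thm:EJKK}$(v)$. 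Finally, enlarging coefficients from $\mathbb Q$ to $\mathbb C$ does not disturb corepresentability of higher Chow groups by motives, since rational motivic cohomology is simply tensored up.
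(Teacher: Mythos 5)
Your proposal matches the paper's treatment: the paper likewise leaves the conjecture open in general, proves $(ii)^{+}\Leftrightarrow(iii)^{+}$ via the orbifold Chern character of Theorem~\ref{thm:main1}$(v)$ together with the classical Chern character isomorphism of Theorem~\ref{Riemann--Roch1} (Lemma~\ref{lemma:KChowEquiv}), proves $(iv)\Rightarrow(iii)^{+}$ by applying $\Hom_{\DM}(-,\mathbb{C}(*)[2*-\bullet])$ to the algebra-object isomorphism $M(Y)\simeq M_{\orb}(\mathcal{X})$ (Proposition~\ref{prop:MotiveImpliesChow}), and then deduces the known cases from the motivic HRC results of \cite{MHRCKummer}, \cite{MHRCK3}, \cite{McKaySurface}. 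Your detour through the Adams weight decomposition is an inessential variant of the paper's direct use of the multiplicative Chern character isomorphism.
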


On one hand, by Theorem \ref{thm:main1} on the orbifold (higher) Chern character map, $(iii)^{+}$ is equivalent to $(ii)^{+}$ (Lemma \ref{lemma:KChowEquiv})\,; on the other hand, we will show in Proposition \ref{prop:MotiveImpliesChow} that the implication $(iv)\Longrightarrow (iii)^{+}$ holds. As a consequence, we can improve the first author's previous joint works with Tian and Vial \cite{MHRCKummer}, \cite{MHRCK3} and \cite{McKaySurface} by including the higher K-theory and higher Chow groups, thus confirming the (strengthened) hyper-K\"ahler resolution conjecture in several interesting cases\,:

\begin{theorem}\label{thm:main2}
Conjecture \ref{conj:HRC+} holds in the following cases, where $n\in \mathbb{N}$, $A$ is an abelian surface and $S$ is a projective K3 surface.
\begin{enumerate}[$(i)$]
\item $\mathcal{X}=[A^{n}/\mathfrak{S}_{n}]$, $Y=A^{[n]}$ the $n$-th Hilbert scheme of points of $A$ and the resolution is the Hilbert--Chow morphism.
\item $\mathcal{X}=[A^{n+1}_{0}/\mathfrak{S}_{n+1}]$, $Y=K_{n}(A)$ the $n$-th generalized Kummer variety and the resolution is the restriction of the Hilbert--Chow morphism, where $A^{n+1}_0$ denotes the kernel of the summation map $A^{n+1}\to A$, endowed with the natural action of $\mathfrak{S}_{n+1}$.
\item $\mathcal{X}=[S^{n}/\mathfrak{S}_{n}]$, $Y=S^{[n]}$ the $n$-th Hilbert scheme of points of $S$  and the resolution is the Hilbert--Chow morphism.
\item $\mathcal{X}$ is a  2-dimensional algebraic orbifold with isolated stacky points and $Y$ is the minimal resolution of $|\mathcal{X}|$.
\end{enumerate}
\end{theorem}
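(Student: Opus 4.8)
The plan is to obtain Theorem~\ref{thm:main2} as a consequence of the \emph{motivic} hyper-K\"ahler resolution conjecture --- part $(iv)$ of Conjecture~\ref{conj:HRC} --- which is already known in each of the four situations. The reduction rests on the chain of implications already at our disposal in this paper: Proposition~\ref{prop:MotiveImpliesChow} gives $(iv)\Rightarrow(iii)^{+}$, while the orbifold higher Chern character isomorphism of Theorem~\ref{thm:main1}$(v)$, combined with the higher Riemann--Roch isomorphism $K_{\bullet}(Y)_{\mathbb Q}\xrightarrow{\ \sim\ }\CH^{*}(Y,\bullet)_{\mathbb Q}$ on the smooth side, yields $(iii)^{+}\Leftrightarrow(ii)^{+}$ --- this being exactly Lemma~\ref{lemma:KChowEquiv}. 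Hence it suffices to produce, in each of the cases (i)--(iv), an isomorphism of commutative algebra objects $M(Y)\simeq M_{\orb}(\mathcal{X})$ in $\DM_{\mathbb C}$.

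For (i) and (ii) one invokes \cite{MHRCKummer}, and for (iii) the paper \cite{MHRCK3}: in each of these cases one has an isomorphism of commutative algebra objects in $\CHM_{\mathbb Q}$ between the Chow motive $\mathfrak{h}(Y)$ of the hyper-K\"ahler resolution and the orbifold Chow motive of $\mathcal{X}$, the latter assembled --- in the style of \cite{JKK1} --- from the fixed loci $X^{g}$ of the group elements, their age shifts, and the Jarvis--Kaufmann--Kimura product. Applying the natural symmetric monoidal functor $\CHM_{\mathbb Q}\to\DM_{\mathbb Q}$ and extending scalars from $\mathbb Q$ to $\mathbb C$ then produces the desired isomorphism in $\DM_{\mathbb C}$, \emph{provided} one knows that this functor carries the orbifold Chow motive of $\mathcal{X}$ to the object $M_{\orb}(\mathcal{X})$ of \S\ref{subsect:OrbMot}, algebra structures included.

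Case (iv) is handled the same way through \cite{McKaySurface}: there $\mathcal{X}$ is Zariski-locally of the form $[\mathbb{C}^{2}/\Gamma]$ with $\Gamma$ finite and $Y\to|\mathcal{X}|$ the minimal (crepant) resolution, and the motivic multiplicative McKay correspondence for such surfaces supplies the required algebra isomorphism in $\CHM_{\mathbb Q}$; alternatively one could argue directly, the twisted sectors over the finitely many stacky points being $0$-dimensional, so that their contributions match --- sector by sector --- the classes and the intersection pairing of the exceptional $(-2)$-curves inside $\CH^{*}(Y,\bullet)_{\mathbb C}$, exactly as in the $\bullet=0$ case. The single point in this whole argument that is not purely formal --- and hence the natural candidate for the ``hard part'' --- is the compatibility flagged above: that the embedding $\CHM_{\mathbb Q}\hookrightarrow\DM_{\mathbb Q}$ transports the Chow-motivic orbifold algebra structures of the cited works onto the $\DM$-algebra structure on $M_{\orb}(\mathcal{X})$ defined here. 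This ought to hold because both orbifold products are built from the same cycle-theoretic ingredients --- the twisted pull-backs and Gysin push-forwards along the evaluation and fixed-locus diagrams of the inertia stack, twisted by the class of the obstruction bundle $\mathcal{E}_{\mathbb{T}}$ --- all of which are transported compatibly by $\CHM_{\mathbb Q}\to\DM_{\mathbb Q}$; once this identification is in place (most naturally as part of \S\ref{subsect:OrbMot}), Theorem~\ref{thm:main2} follows by collecting the four inputs and running $(iv)\Rightarrow(iii)^{+}\Leftrightarrow(ii)^{+}$.
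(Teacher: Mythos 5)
Your proposal is correct and follows exactly the paper's own route: quote the motivic HRC from \cite{MHRCKummer}, \cite{MHRCK3}, \cite{McKaySurface}, pass through the embedding $\CHM\hookrightarrow\DM$, and apply Proposition~\ref{prop:MotiveImpliesChow} together with Lemma~\ref{lemma:KChowEquiv}. The compatibility you flag as the one non-formal point --- that $\iota$ carries $\h_{\orb}(\mathcal{X})$ with its JKK algebra structure to $M_{\orb}(\mathcal{X})$ --- is indeed where the paper places it, namely as part of the construction in \S\ref{subsect:OrbMot}.
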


The cohomological hyper-K\"ahler resolution conjecture was proved in the cases $(i)$ and $(iii)$ by Fantechi--G\"ottsche \cite{FG1} and Lehn--Sorger \cite{LS}. Conjecture \ref{conj:HRC} was proved in the cases $(i)$ and $(ii)$ in \cite{MHRCKummer},  in the case $(iii)$ in \cite{MHRCK3} and in the case $(iv)$ in \cite{McKaySurface}.

\subsection{Notation and Convention}
We denote $\mathbf{Sch}_k$ for the category of separated noetherian schemes which are quasi-projective over a field $k$. 
The full subcategory of $\mathbf{Sch}_k$ consisting of smooth varieties will be denoted by $\mathbf{Sm}_k$. 
Chow groups and K-theory are with rational coefficients. 

If $X \to Y$ is a smooth morphism in $\mathbf{Sch}_k$, the relative tangent bundle will be denoted by $T_{X/Y}$. When $Y = \operatorname{Spec} (k)$, we write simply $TX$ or $T_X$. 

\bigskip
\noindent \textbf{Acknowledgement\,:} This work was started when both authors were participating the 2017 Trimester \emph{K-theory and related fields} at the Hausdorff Research Institute for Mathematics in Bonn. We thank the institute for the hospitality and the exceptional working condition.

 Lie Fu is supported by ECOVA (ANR-15-CE40-0002), HodgeFun (ANR-16-CE40-0011), LABEX MILYON (ANR-10-LABEX-0070) of Universit\'e de Lyon and \emph{Projet Inter-Laboratoire} 2017, 2018, 2019 by F\'ed\'eration de Recherche en Math\'ematiques Rh\^one-Alpes/Auvergne CNRS 3490. Manh Toan Nguyen is supported by Universität Osnabrück, the DFG - GK 1916 \textit{Kombinatorische Strukturen in der Geometrie}, and the DFG - SPP 1786 \textit{Homotopy Theory and Algebraic Geometry}.

\section{Preliminaries on K-theory and motivic cohomology}
In this section, some fundamental results in algebraic K-theory and motivic cohomology are collected for later use.

\subsection{Algebraic K-theory}\label{subsect:Ktheory}

For any $X \in \mathbf{Sch}_k$, let $K(X)$ be the connected $K$-theory spectrum of the exact category of vector bundles on $X$ in the sense of Quillen \cite{DQ1}. 
This is homotopy equivalent to the Thomason--Trobaugh's connected $K$-theory spectrum of the complicial bi-Waldhausen category of perfect complexes on $X$ \cite[Proposition 3.10]{TT1}.
We will allow ourselves to identity these two constructions. 
The $i$-th $K$-group of $X$, denoted by $K_i(X)$, is by definition the $i$-th homotopy group of $K(X)$. In particular, $K_0(X)$ is the Grothendieck group of the category of vector bundles on $X$.
We set
$$
K_{\bullet}(X):= \bigoplus_{i} K_i(X).
$$

The assignment $X \mapsto K(X)$ (and hence $X \mapsto K_{\bullet}(X)$) is a contravariant functor on $\mathbf{Sch}_k$ and 
a covariant functor on the category of quasi-projective schemes of finite type over $k$ with proper maps of finite Tor-dimension \cite[3.14 and 3.16.2 - 3.16.6]{TT1}. 
The tensor product of vector bundles over $\mathcal{O}_X$ induces a pairing
$$
\otimes: K(X) \wedge K(X) \to K(X)
$$
which is commutative and associative up to ``coherent homotopy''. This makes $K_{\bullet}(X)$ a graded commutative ring with unit $[\mathcal{O}_X] \in K_0(X)$. We will use the notation '$\cup$' for this product.

\begin{proposition}[Projection formula {\cite[Proposition 2.10]{DQ1}, \cite[Proposition 3.17]{TT1}}] \label{Projection formula}
Suppose that $X, Y \in \mathbf{Sch}_k$ and $f: X \to Y$ is proper morphism of finite Tor-dimension. The following diagram 
\[
\begin{tikzpicture}[commutative diagrams/every diagram]
\node (P0) at (90:2cm) {$K(Y) \wedge K(X)$};
\node (P1) at (90+72:1.5cm) {$K(X) \wedge K(X)$} ;
\node (P2) at (90+2*72:1.5cm) {\makebox[5ex][r]{$K(X)$}};
\node (P3) at (90+3*72:1.5cm) {\makebox[5ex][l]{$K(Y)$}};
\node (P4) at (90+4*72:1.5cm) {$K(Y) \wedge K(Y)$};
\path[commutative diagrams/.cd, every arrow, every label]
(P0) edge node[swap] {$f^* \wedge 1$} (P1)
(P1) edge node[swap] {$\otimes$} (P2)
(P2) edge node {$f_*$} (P3)
(P4) edge node {$\otimes$} (P3)
(P0) edge node {$1 \wedge f_*$} (P4);
\end{tikzpicture}
\]
is commutative up to canonically chosen homotopy. 

In particular, for any $x \in K_{\bullet}(X)$ and $y \in K_{\bullet}(Y)$, we have
\begin{equation} \label{ProjForm1}
f_{*}(x \cup f^*y) = f_{*}x \cup y
\end{equation}
in $K_{\bullet}(Y)$.
\end{proposition}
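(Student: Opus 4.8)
The plan is to derive the commutative diagram at the level of $K$-theory spectra from the corresponding statement about exact categories (or complicial bi-Waldhausen categories of perfect complexes), and then read off the formula \eqref{ProjForm1} on homotopy groups. Working with perfect complexes à la Thomason--Trobaugh is cleanest here, since $f_*=Rf_*$ is then honestly defined on the category of perfect complexes (using that $f$ is proper of finite Tor-dimension, so $Rf_*$ preserves perfection), $f^*=Lf^*$ is the left-derived pullback, and $\otimes=\otimes^{\mathbf{L}}_{\mathcal O}$ is the derived tensor product, all of which are exact functors of Waldhausen categories and hence induce maps of spectra. So the first step is to recall these functorialities, citing \cite[3.14, 3.16.2--3.16.6]{TT1} for the spectrum-level maps $f^*$ and $f_*$, and \cite[3.15]{TT1} for the pairing $\otimes$.

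The key step is then the \emph{projection formula at the level of perfect complexes}: for $E$ a perfect complex on $Y$ and $F$ a perfect complex on $X$, there is a natural isomorphism
\[
Rf_*\!\left(F \otimes^{\mathbf{L}}_{\mathcal O_X} Lf^*E\right) \;\simeq\; Rf_*F \otimes^{\mathbf{L}}_{\mathcal O_Y} E
\]
in the derived category of $Y$, compatible with the exact structure. This is the classical derived projection formula (e.g. \cite{TT1}, or SGA 6); one checks it commutes with the Waldhausen structures (cofibrations, weak equivalences) so that it promotes to a homotopy between the two composites in the pentagon. Concretely, the composite $K(Y)\wedge K(X)\xrightarrow{f^*\wedge 1}K(X)\wedge K(X)\xrightarrow{\otimes}K(X)\xrightarrow{f_*}K(Y)$ is induced by $(E,F)\mapsto Rf_*(Lf^*E\otimes^{\mathbf L}F)$, while $K(Y)\wedge K(X)\xrightarrow{1\wedge f_*}K(Y)\wedge K(Y)\xrightarrow{\otimes}K(Y)$ is induced by $(E,F)\mapsto E\otimes^{\mathbf L}Rf_*F$, and the natural isomorphism above (plus commutativity of $\otimes^{\mathbf L}$) provides the required canonical homotopy. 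Passing to $\pi_*$ and using that the smash product of spectra induces the pairing on homotopy groups gives \eqref{ProjForm1}: for $x\in K_\bullet(X)$, $y\in K_\bullet(Y)$, $f_*(x\cup f^*y)=f_*(f^*y\cup x)=y\cup f_*x=f_*x\cup y$, with the last equality by graded-commutativity of $\cup$.

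The main obstacle is purely bookkeeping rather than conceptual: one must verify that the derived projection isomorphism is suitably \emph{functorial and compatible with the bi-Waldhausen structures}, so that it genuinely yields a homotopy of maps of spectra (and a \emph{canonically chosen} one, as the statement asserts), not merely an identification on homotopy groups. This requires knowing that $Lf^*$, $Rf_*$ and $\otimes^{\mathbf L}$ can be realized by strictly functorial models on perfect complexes (e.g. via suitable flat or injective-type resolutions, as set up in \cite[\S 3]{TT1}) and that the projection isomorphism is a natural transformation between such models; granting the cited results of Thomason--Trobaugh, this is standard. An alternative, if one prefers to avoid spectrum-level subtleties, is to prove only the identity \eqref{ProjForm1} on homotopy groups directly, using the module structure of $K_\bullet(X)$ over $K_\bullet(Y)$ via $f^*$ together with the derived projection isomorphism; but since later sections need the homotopy-coherent pairing, establishing the diagram up to canonical homotopy is the appropriate level of generality here.
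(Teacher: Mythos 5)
The paper gives no proof of this proposition at all --- it is quoted directly from Quillen and Thomason--Trobaugh --- and your outline is precisely the standard argument of \cite[Proposition 3.17]{TT1} that the citation points to: the derived projection formula $Rf_*(F\otimes^{\mathbf L}Lf^*E)\simeq Rf_*F\otimes^{\mathbf L}E$ for perfect complexes, promoted to a canonical homotopy of spectrum maps and then read off on homotopy groups (with the sign bookkeeping for \eqref{ProjForm1} handled correctly). Your proposal is correct and consistent with the paper's intended justification.
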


Recall that a morphism $f: X \to Y$ is called a \textit{local complete intersection} (l.c.i) if $f$ can be written as the composition of a closed regular embedding $i: X \to P$ and a smooth morphism $p: P \to Y$.
The class of the relative tangent bundle of $f$ is
$$
N_f: = [i^*T_{P/Y}] - [N_XP] \in K_0(X),
$$
where $N_XP$ is the the normal bundle of $X$ in $P$. $N_{f}$ is independent of the factorization. 

Let 
\begin{equation} \label{eq:excessl.c.i}
\begin{tikzcd}
X' \arrow{r}{f'} \arrow[swap]{d}{g'} & Y' \arrow{d}{g} \\
X \arrow{r}{f}& Y
\end{tikzcd}
\end{equation}
be a Cartesian square where $f$ is a l.c.i morphism. Choose a factorization $f=p\circ i$ as before, and form the Cartesian diagram
\[ \begin{tikzcd}
X' \arrow{r}{i'} \arrow[swap]{d}{g'} & P' \arrow{d} \arrow{r}{p'} \arrow[swap]{d} & Y' \arrow{d}{g} \\
X \arrow{r}{i}& P \arrow{r}{p} & Y.
\end{tikzcd}
\]
Then there is a canonical embedding 
$$
N_{X'}P' \to g'^{*} N_{X}P
$$
of vector bundles on $X'$ \cite[\S 6.1]{Fulton1}.
The excess normal bundle of \eqref{eq:excessl.c.i} is defined by
$$
E: = g'^{*}N_XP / N_{X'}P'.
$$
This definition is independent of the choice of the factorization \cite[Proposition 6.6]{Fulton1}.
Let $E^{\vee}$ be its dual bundle.
\begin{proposition}[Excess intersection formula {\cite[Théorème 3.1]{Thomason2}}, {\cite[Theorem 3.8]{Kock1}}] \label{Excess intersection formula}
Consider the Cartesian diagram \eqref{eq:excessl.c.i} where all the schemes are quasi-projective and $f$ is l.c.i and projective . Then the diagram
\[ \begin{tikzcd}
K_{\bullet}(X') \arrow{r}{f'_{*}}  & K_{\bullet}(Y')  \\
K_{\bullet}(X) \arrow{u}{\lambda_{-1}(E^{\vee}) \cup g'^*} \arrow{r}{f_{*}}& K_{\bullet}(Y) \arrow{u}{g^*}
\end{tikzcd}
\]
commutes, where $\lambda_{-1} E^{\vee}$ is the Euler class $\sum_{i \ge 0}(-1)^i [\wedge^i E^{\vee}] \in K_0(X')$. In other words, for any $x \in K_{\bullet} (X)$,
\begin{equation} \label{ExInterForm1}
g^{*}f_{*} x = f'_{*}(\lambda_{-1}(E^{\vee}) \cup g'^{*} x).
\end{equation}
\end{proposition}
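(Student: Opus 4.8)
The plan is to split the statement into two extreme cases, following Thomason \cite{Thomason2} and K\"ock \cite{Kock1}. Since $f$ is projective and l.c.i., it factors as $f=p\circ i$ with $i\colon X\hookrightarrow P$ a regular closed immersion and $p\colon P\to Y$ smooth and projective; concretely one may take $P=\mathbb{P}^{N}_{Y}$. Base-changing this factorization along $g$ refines the square \eqref{eq:excessl.c.i} into a square built from $p$ (with new vertical map $h\colon P'\to P$) and a square built from $i$. Since $p$ is smooth, the excess bundle of the $p$-square vanishes, and the excess bundle $E$ of \eqref{eq:excessl.c.i} computed via the factorization $f=p\circ i$ is, by definition, the excess bundle of the $i$-square. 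Because $f_{*}=p_{*}\circ i_{*}$ and pull-backs compose, it therefore suffices to prove \eqref{ExInterForm1} for the $p$-square and for the $i$-square.

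For the $p$-square the excess term is trivial, so one is reduced to ordinary base change $g^{*}p_{*}=p'_{*}h^{*}$. For $P=\mathbb{P}^{N}_{Y}$ I would deduce this directly from the projective bundle formula $K_{\bullet}(\mathbb{P}^{N}_{Y})\cong\bigoplus_{j=0}^{N}K_{\bullet}(Y)\cup[\mathcal{O}(-j)]$, under which push-forward, pull-back and base change along $g$ are all given by explicit, visibly compatible formulas; alternatively this is the Tor-independent base-change theorem of Thomason--Trobaugh \cite{TT1}, applicable because $p$ is flat.

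The $i$-square is the heart of the matter. The key step I would carry out is the computation of $h^{*}i_{*}[\mathcal{O}_{X}]$. Locally on $P$ the ideal $\mathcal{I}$ of $X$ is generated by a regular sequence $f_{1},\dots,f_{r}$, so $\mathbf{L}h^{*}(i_{*}\mathcal{O}_{X})$ is computed, locally on $P'$, by the Koszul complex on the pulled-back functions $f_{1}',\dots,f_{r}'$. Since $i'\colon X'\hookrightarrow P'$ is a regular immersion --- which is exactly the hypothesis making $N_{X'}P'$, hence $E$, a vector bundle --- one can after reordering split this Koszul complex as the tensor product of the Koszul complex on a regular subsequence $f_{1}',\dots,f_{s}'$ cutting out $X'$ with the Koszul complex on the remaining functions $f_{s+1}',\dots,f_{r}'$, which lie in the ideal of $X'$ and hence contribute a trivial differential; the homology sheaves are then $\mathrm{Tor}^{\mathcal{O}_{P}}_{j}(\mathcal{O}_{P'},\mathcal{O}_{X})\cong\wedge^{j}E^{\vee}$, which glue globally. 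Hence $h^{*}i_{*}[\mathcal{O}_{X}]=i'_{*}\bigl(\sum_{j}(-1)^{j}[\wedge^{j}E^{\vee}]\bigr)=i'_{*}\lambda_{-1}(E^{\vee})$ in $K_{0}(P')$. The case of a general class $x\in K_{\bullet}(X)$ and the passage to higher $K$-groups would then follow from the derived projection formula for the closed immersion $i$, in the refined form $\mathbf{L}h^{*}i_{*}\mathcal{F}\simeq i'_{*}\bigl(\mathbf{L}g'^{*}\mathcal{F}\otimes^{\mathbf{L}}_{\mathcal{O}_{X'}}\mathcal{K}\bigr)$ with $\mathcal{K}$ the excess Koszul complex $\bigoplus_{j}\wedge^{j}E^{\vee}[j]$, combined with Proposition \ref{Projection formula}. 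Equivalently, one can first prove the self-intersection formula $i^{*}i_{*}x=\lambda_{-1}(N_{X}P^{\vee})\cup x$ (the special case $P'=X$, $h=i$) and then reduce the general case to it by deformation to the normal cone, as in \cite[Th\'eor\`eme 3.1]{Thomason2} and \cite[Theorem 3.8]{Kock1}.

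The hard part will be this $i$-square analysis, specifically turning the derived-category identifications --- the base-change formula for $\mathbf{L}h^{*}i_{*}$, the excess Koszul resolution, the projection formula --- into honest equalities of maps between $K$-theory \emph{spectra}, given that the monoidal structure on $K(-)$ is only associative and commutative up to coherent homotopy (Proposition \ref{Projection formula}). One must also keep careful track of the standing hypothesis that $N_{X'}P'$, and hence $E$, is locally free. These verifications are exactly what is done in \cite[Th\'eor\`eme 3.1]{Thomason2} and \cite[Theorem 3.8]{Kock1}, which I would ultimately invoke for the complete argument.
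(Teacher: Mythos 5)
The paper offers no proof of this proposition at all --- it is quoted directly from Thomason and K\"ock, which is also where your sketch ultimately lands. Your outline (factoring $f=p\circ i$ through $\mathbb{P}^N_Y$, flat base change for the smooth part, the excess Koszul computation $h^*i_*[\mathcal{O}_X]=i'_*\lambda_{-1}(E^{\vee})$ for the regular immersion, and deferring the spectrum-level coherence issues to the cited references) is a correct summary of the argument in those sources, so there is nothing in the paper to diverge from.
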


\subsection{Motivic cohomology}

Motivic cohomology is a cohomology theory for algebraic varieties which plays the role of singular cohomology for topological spaces and includes the Chow ring of algebraic cycles as a special case. 
Provisioned by Beilinson, Deligne and constructed by Bloch, Friedlander--Suslin, Voevodsky, etc., this cohomology theory is a key ingredient in Voevodsky's proof of the Milnor conjecture \cite{Voevodsky4} and the motivic Bloch--Kato's conjecture \cite{Voevodsky5}.
Over smooth varieties, all of these constructions are known to be equivalent \cite{Voevodsky1}. 
We choose here Bloch's definition of motivic cohomology via higher Chow groups \cite{Bloch1}, which have an explicitly cycle-theoretical description.

Let $\Delta^r$ be the hyperplane in the affine space $\mathbb{A}_k^{r+1}$ defined by $t_0 + \ldots + t_r = 1$. 
A \textit{face} of $\Delta^r$ is a closed subscheme given by $t_{i_1} = \ldots = t_{i_j} = 0$ for a subset $\{ i_1, \ldots, i_j \} \subset \{0, \ldots, r\}$.

For any $X \in \mathbf{Sch}_k$, let $z^p(X, r)$ be the free abelian group on the irreducible subvarieties of codimension $p$ in $X \times \Delta^r$ which meet all faces \textit{properly}, \emph{i.e.} in the maximal codimension (if not empty).
The assignment $r \mapsto z^p(X, r)$ forms a simplicial abelian group \cite[Introduction]{Bloch1}. The \textit{cycle complex} $z^{p}(X, \bullet)$ is defined to be the complex associated to this simplical group.
\begin{definition}[Bloch \cite{Bloch1}]
The $n$-th \emph{higher Chow group} of algebraic cycles of codimension-$p$, denoted by $\CH^p(X,n)$, is the $n$-th homology group of the complex $z^{p}(X, \bullet)$, i.e.
$$
\CH^p(X,n): = H_n (z^p(X, \bullet)).
$$
\end{definition}

It is straightforward to check that complex $z^{*}(X, \bullet)$ is covariant functorial (with a suitable shift in the grading) for proper morphisms and contravariant functorial for flat morphisms. 

Let $f: X \to Y$ be an arbitrary morphism in $\mathbf{Sm}_k$. Let $z^p_f(Y, n) \subset z^p(Y, n)$ be the subgroup generated by the codimension $p$ subvarieties $Z \subset Y \times \Delta^n$, meeting the faces properly, and 
such that the pull back $X \times Z$ intersects the graph of $f$ properly. Then $z^p_f(Y, \bullet)$ is a chain complex. Using the so-called ``technique of moving cycles'', it is shown that
the inclusion of complexes $z^p_f(Y, \bullet) \subset z^p(Y, \bullet)$ is a quasi-isomorphism. The pull-back by $f$ is defined for algebraic cycles in $z^p_f(Y, \bullet)$. This yields a well-defined homomorphism
$$
f^* \colon \CH^p(Y, n) \to \CH^p(X,n).
$$
Moreover, the assignment $X \to \CH^{*}(X,n)$ is a contravariant functor on the category of smooth, quasi-projective $k$-schemes. For more details, see \cite[Theorem 4.1]{Bloch1} or \cite{Levine1}.

The cycle complexes admit natural associative and commutative external products
$$
\cup_{X,Y}: z^p(X, \bullet) \otimes z^q(Y, \bullet) \to z_{p+q}(X \times Y, \bullet)
$$
in the derived category $D^-(\mathbf{Ab})$ of bounded below complexes of abelian groups. For $X$ smooth over $k$, the pull-back along the diagonal embedding
$
\delta: X \to X \times X
$
induces a natural intersection product in $D^-(\mathbf{Ab})$
$$
\cup_X: = \delta^* \circ \cup_{X,X}: z^p(X, \bullet) \otimes z^q(X, \bullet) \to z^{p+q}(X, \bullet).
$$
These products make $\bigoplus_{p} z^p(X, \bullet)$ an associative and commutative ring in the derived category.
In particular, $\bigoplus_{p,n} \CH^{p}(X,n)$ is a bigraded ring, where the fundamental class $[X] \in \CH^*(X)$ is the  identity element and the product is commutative with respect to the $p$-grading and graded commutative with respect to the $n$-grading 
(i.e., if $x \in \CH^*(X,m)$ and $y \in \CH^*(Y, n)$, then $x \cup y = (-1)^{mn}y \cup x$), see \cite[Corollary 5.7]{Bloch1}.

\begin{remark}
When $n=0$, $z^p(X, 0)$ is the group $z^p(X)$ of codimension $p$ cycles on $X$ and $\CH^{p}(X,0)$ is the quotient of $z^p(X)$ by killing cycles of the form $[Z(0)]- [Z(1)]$, 
where $Z$ is a codimension $p$ cycle on $X \times \mathbb{A}_k^1$ meeting the fibre over $i \in \mathbb{A}_k^1$ properly in $Z(i)$ for $i= 0, \ 1$. 
Hence $\CH^p(X,0)$ is the usual Chow group $\CH^p(X)$ defined by Fulton \cite[1.6]{Fulton1}. 
The restriction of the intersection product '$\cup_X$' to $\bigoplus_p \CH^p(X,0)$ is the usual intersection product on the Chow ring of $X$ \cite[Theorem 5.2 (c)]{Levine1}. 
Note that higher Chow groups considered in \emph{loc.cit.}~are with rational coefficients, however the proof works equally with integral coefficients.
\end{remark}

Similar to algebraic $K$-theory, higher Chow groups satisfy the following properties.
\begin{proposition}[Projection formula]
For any proper map $f: X \to Y$ in $\mathbf{Sm}_k$ and $x \in \CH^*(X, \bullet)$, $y \in \CH^*(Y, \bullet)$, we have
\begin{equation} \label{ProjForm2}
f_{*}(x \cup f^*y) = f_*x \cup y.
\end{equation}
\end{proposition}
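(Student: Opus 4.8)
The plan is to reduce the projection formula to the graph of $f$, together with three elementary properties of Bloch's cycle complexes that are already in place: naturality of the external product $\cup_{X,Y}$ under pullback, its compatibility with proper pushforward, and the very definition $\cup_Y=\delta_Y^{*}\circ\cup_{Y,Y}$ of the intersection product. Throughout, write $\Gamma_f\colon X\hookrightarrow X\times Y$, $x\mapsto(x,f(x))$, for the graph of $f$ (a regular closed embedding, since $X$ and $Y$ are smooth), and $x\times y:=\cup_{X,Y}(x\otimes y)\in\CH^{*}(X\times Y,\bullet)$ for the external product. The first step is the identity
$$
x\cup f^{*}y=\Gamma_f^{*}(x\times y)\qquad\text{in }\CH^{*}(X,\bullet).
$$
This follows from naturality of the external product along $\mathrm{id}_X\times f\colon X\times X\to X\times Y$, which gives $\cup_{X,X}(x\otimes f^{*}y)=(\mathrm{id}_X\times f)^{*}(x\times y)$, hence $x\cup f^{*}y=\delta_X^{*}(\mathrm{id}_X\times f)^{*}(x\times y)$, combined with functoriality of the pullback and the equality $(\mathrm{id}_X\times f)\circ\delta_X=\Gamma_f$. (As usual these identities are understood in the derived category, i.e., after replacing the cycle complexes by quasi-isomorphic subcomplexes of cycles in adequate position for the maps involved.)

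Next, I would exploit the Cartesian square
\[
\begin{CD}
X @>{\Gamma_f}>> X\times Y\\
@V{f}VV @VV{f\times\mathrm{id}_Y}V\\
Y @>{\delta_Y}>> Y\times Y
\end{CD}
\]
in which $\delta_Y$ is a regular closed embedding (as $Y$ is smooth), $f\times\mathrm{id}_Y$ is proper (base change of the proper map $f$), and the excess bundle of the square vanishes: the canonical inclusion $N_{\Gamma_f}(X\times Y)\hookrightarrow f^{*}N_{\delta_Y}(Y\times Y)$ is an isomorphism, both sides being $f^{*}T_Y$ because $\Gamma_f$ is a section of the smooth projection $\mathrm{pr}_X\colon X\times Y\to X$. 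Granting base change for this square, $\delta_Y^{*}\circ(f\times\mathrm{id}_Y)_{*}=f_{*}\circ\Gamma_f^{*}$, I would conclude:
\[
\begin{aligned}
f_{*}(x\cup f^{*}y)
&=f_{*}\Gamma_f^{*}(x\times y)
=\delta_Y^{*}(f\times\mathrm{id}_Y)_{*}(x\times y)\\
&=\delta_Y^{*}\big((f_{*}x)\times y\big)
=(f_{*}x)\cup y,
\end{aligned}
\]
where the penultimate equality is the compatibility of the external product with proper pushforward in the first variable, and the last one is the definition of $\cup_Y$.

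The crux is the base change identity $\delta_Y^{*}\circ(f\times\mathrm{id}_Y)_{*}=f_{*}\circ\Gamma_f^{*}$ at the level of cycle complexes: since $\Gamma_f^{*}$ and $\delta_Y^{*}$ are only defined after restricting to quasi-isomorphic subcomplexes of cycles meeting the relevant loci properly, one must produce a single representative of $x\times y$ that is simultaneously adapted to $\Gamma_f$ and, after the (moving-free) proper pushforward along $f\times\mathrm{id}_Y$, to $\delta_Y$, and then check that the two resulting classes coincide. This is precisely the refined Gysin / base change formalism for higher Chow groups along regular embeddings, which is available through the moving lemmas of Bloch and Levine (\cite{Bloch1}, \cite{Levine1}); alternatively it can be deduced from the representability of motivic cohomology and the base change theorems in Voevodsky's category of motives \cite{Voevodsky1}. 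The remaining ingredients --- naturality and functoriality of pullbacks, and the behaviour of $\cup_{X,Y}$ under proper pushforward --- follow formally from the cycle-level constructions recalled above, up to bookkeeping with the Eilenberg--Zilber structure on the complexes.
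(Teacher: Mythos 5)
Your argument is correct in outline, but it takes a genuinely different route from the paper, whose entire proof is the one-line citation ``the proof of \cite[Theorem 5.2 (b)]{Levine1} works equally with integral coefficients.'' You instead derive the projection formula formally from the external product: the identity $x\cup f^*y=\Gamma_f^*(x\times y)$, the compatibility $(f\times\mathrm{id}_Y)_*(x\times y)=(f_*x)\times y$, and base change $\delta_Y^*\circ(f\times\mathrm{id}_Y)_*=f_*\circ\Gamma_f^*$ for the transverse Cartesian square you display. Your computation of the (vanishing) excess bundle is right, and the square is indeed Tor-independent, so no excess term appears. What this buys is a clean logical reduction; what it costs is that the crux you isolate --- commutation of a moving-lemma pullback with proper pushforward along a Tor-independent square --- is itself a statement of essentially the same depth as the projection formula in the higher Chow setting, so you have not avoided the heavy machinery, only relocated it. It is worth noting that your base-change identity is exactly the zero-excess instance of the excess intersection formula stated immediately after this proposition in the paper, which is there deduced from D\'eglise's Gysin formalism via the comparison with motivic cohomology; so your proof could be closed up by invoking that proposition (taking $f\times\mathrm{id}_Y$ as the proper l.c.i.\ map being pushed forward and $\delta_Y$ as the map being pulled back, with $c_{top}(E)=1$), provided one checks there is no circularity --- and there is none, since D\'eglise's formalism does not presuppose the projection formula for higher Chow groups. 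In the end both your route and the paper's rest on an external citation for the hard moving-lemma input; yours has the merit of making the formal skeleton explicit.
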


\begin{proof}
The proof \cite[Theorem 5.2 (b)]{Levine1} works equally with integral coefficients.
\end{proof}
\begin{proposition} [Excess intersection formula]
Consider a Cartesian diagram \eqref{eq:excessl.c.i} where all varieties are smooth and $g$ is projective. Then for any $x \in \CH^{*}(X, \bullet)$,
$$
g^{*}f_{*} (x) =f'_{*}\left(c_{top}(E) \cup {g'}^{*} x\right)
$$
where $c_{top}(E) \in \CH^{*}(X')$ is the top Chern class of the excess normal bundle $E$.
\end{proposition}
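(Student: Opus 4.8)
The plan is to imitate the classical proof of the excess intersection formula (Fulton \cite[Ch.~6]{Fulton1}), reducing in two steps to the case where $f$ is the zero section of a vector bundle, using throughout that the basic operations on higher Chow groups---homotopy invariance, the localization long exact sequence, the projective bundle formula, the self-intersection formula, and the specialization attached to deformation to the normal cone---are available with their usual properties \cite{Bloch1, Levine1}. \emph{Step 1: reduction to a regular closed immersion.} As $X,Y$ are smooth and $f$ is proper, $f$ is projective; write $f=p\circ\iota$ with $\iota\colon X\hookrightarrow P:=\mathbb{P}^N\times Y$ a regular closed immersion and $p\colon P\to Y$ the smooth projection, and use this factorization to compute $E$. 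Base changing \eqref{eq:excessl.c.i} along $g$ enlarges it into two Cartesian squares with middle term $P':=\mathbb{P}^N\times Y'$; the right-hand one (with horizontal maps $p,p'$) satisfies base change by the projective bundle formula (using $g^{*}\mathcal{O}(1)=\mathcal{O}(1)$ and functoriality of pullback), so, since $f_{*}=p_{*}\iota_{*}$ and $f'_{*}=p'_{*}\iota'_{*}$, the identity for \eqref{eq:excessl.c.i} reduces to the identity for the left-hand square. Its excess bundle is still $E=g'^{*}N_{X}P/N_{X'}P'$, with $X'=X\times_{P}P'$ smooth by hypothesis, so that $\iota'\colon X'\hookrightarrow P'$ is again a regular closed immersion, of codimension $d'\le d:=\operatorname{codim}(X,P)$, and $E$ has rank $d-d'$. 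Thus we may assume $f$ is a regular closed immersion.

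\emph{Step 2: reduction to a zero section.} Apply deformation to the normal cone to $\iota\colon X\hookrightarrow P$ and to $\iota'\colon X'\hookrightarrow P'$: one obtains smooth varieties $M,M'$, flat over $\mathbb{A}^{1}$, with generic fibres $P,P'$ and special fibres the total spaces of $N_{X}P,N_{X'}P'$, each containing $X\times\mathbb{A}^{1}$, resp.\ $X'\times\mathbb{A}^{1}$, as a regular closed subscheme restricting to $\iota$, resp.\ $\iota'$, over $\mathbb{G}_{m}$ and to a zero section over $0$, together with a map $M'\to M$ over $\mathbb{A}^{1}$ whose special fibre is the bundle map $N_{X'}P'\to N_{X}P$ covering $g'$ that is fibrewise the canonical monomorphism $N_{X'}P'\hookrightarrow g'^{*}N_{X}P$ of \cite[\S 6.1]{Fulton1} followed by the projection, with cokernel $E$. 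Transporting the two sides of the desired identity along the $\mathbb{A}^{1}$-family---using homotopy invariance to identify $\CH^{*}(-\times\mathbb{A}^{1},\bullet)$ with $\CH^{*}(-,\bullet)$ and localization to relate the restrictions over $\mathbb{G}_{m}$ and over $0$, just as in the classical construction of the Gysin map \cite[Ch.~5]{Fulton1}---reduces the identity to its analogue over the special fibre. We may thus assume $P=V$ is the total space of a rank-$d$ vector bundle $\pi_{V}\colon V\to X$ with $f=s$ its zero section, $P'=V'$ that of a rank-$d'$ bundle $\pi'\colon V'\to X'$ with $f'=s'$ its zero section, $g\colon V'\to V$ a bundle map covering $g'$ (so $\pi_{V}\circ g=g'\circ\pi'$), and $E=g'^{*}V/V'$, fitting into an exact sequence $0\to V'\to g'^{*}V\to E\to 0$ on $X'$.

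\emph{Step 3: the zero-section case.} By homotopy invariance $\pi_{V}^{*}$ is an isomorphism, and the self-intersection formula (the normal bundle of $s$ being $V$ itself) gives $s_{*}1=\pi_{V}^{*}c_{top}(V)$; with the projection formula \eqref{ProjForm2} this yields $s_{*}x=\pi_{V}^{*}(x\cup c_{top}(V))$, and likewise $s'_{*}1=\pi'^{*}c_{top}(V')$. Using contravariant functoriality, the relation $\pi_{V}\circ g=g'\circ\pi'$, and the Whitney formula---whose only surviving top term is $c_{top}(g'^{*}V)=c_{top}(V')\,c_{top}(E)$, since $c_{i}(V')=0$ for $i>d'$ and $c_{i}(E)=0$ for $i>d-d'$---one computes
$$
g^{*}s_{*}x=(\pi_{V}\circ g)^{*}(x\cup c_{top}(V))=\pi'^{*}(g'^{*}x\cup c_{top}(g'^{*}V))=\pi'^{*}(g'^{*}x)\cup\pi'^{*}c_{top}(E)\cup s'_{*}1,
$$
and a final application of \eqref{ProjForm2} with $\pi'\circ s'=\operatorname{id}$ rewrites the right-hand side as $s'_{*}(c_{top}(E)\cup g'^{*}x)$, which is the assertion.

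The delicate point is Step 2: one has to know that deformation to the normal cone and its specialization maps are compatible with proper pushforward, flat pullback and cup products on higher Chow groups, and that homotopy invariance can be applied coherently over the $\mathbb{A}^{1}$-family. These are part of the foundations of the theory (\cite{Bloch1}, \cite{Levine1}) rather than new phenomena, but they must be invoked with care; granting them, Fulton's argument \cite[Ch.~6]{Fulton1} carries over essentially verbatim, with \eqref{ProjForm2} as the only extra input.
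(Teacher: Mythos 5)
Your proof is a genuinely different route from the paper's. The paper does not reprove the formula at all: it observes that the statement is a reformulation of D\'eglise's excess intersection formula \cite[Proposition 5.17]{Deligse1}, established in the Gysin-triangle formalism for motivic cohomology, and transports it to higher Chow groups via the comparison isomorphism of \cite{Voevodsky1}, using that this comparison respects pullback, proper pushforward \cite{MVW} and products \cite{KY}. You instead redo Fulton's classical argument internally to Bloch's cycle complexes: factor $f$ through $\mathbb{P}^N\times Y$, deform to the normal cone, and finish with the self-intersection and Whitney formulas in the zero-section case. Your Step 3 computation is correct, and Step 1 is fine granted base change for the smooth projection (which the projective bundle formula does supply). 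What the paper's approach buys is that all the delicate compatibilities are already packaged in D\'eglise's six-functor-style framework; what your approach buys is independence from the comparison theorem, at the cost of having to carry out Step 2 in full. That step is where essentially all the content sits: for higher Chow groups the localization sequence is Bloch's hard moving lemma, the specialization map and its compatibility with proper pushforward and with the (moving-lemma-defined) pullback $g^{*}$ along a non-flat morphism must be established, and doing so amounts to rebuilding the Gysin formalism that the cited reference provides. You correctly flag this as the delicate point, but as written it remains a sketch resting on \cite{Levine1} rather than a complete argument; if you intend the proof to be self-contained, that is the part that still needs to be written out.
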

\begin{proof}

This formula is a reformulation of \cite[Proposition 5.17]{Deligse1} under the comparision between motivic cohomology and higher Chow groups \cite[Corollary 2]{Voevodsky1} 
which is compatible with pull-back, proper push-forward \cite[Lecture 19]{MVW} and preserves multiplications \cite[Theorem 3.1]{KY}. 
\end{proof}
When $n=0$, we recover the projection formula and the excess intersection formula for the usual Chow groups.
\subsection{Riemann--Roch theorem}

There are natural relations between $K$-theory and Chow theory (or other cohomology theories) provided by characteristic classes. 
Relying on the work of Gillet \cite{Gillet1}, Bloch defined in \cite[Section 7]{Bloch1} for any $X \in \mathbf{Sm}_k$ and any $n$, a higher Chern character map
\begin{equation} \label{Chern character}
\mathbf{ch}_n^X: K_n(X) \to \bigoplus_p \CH^p(X,n) \otimes \mathbb{Q}
\end{equation}
which generalizes the usual Chern character \cite{FL1}.

\begin{theorem}[{\cite[Theorem 9.1]{Bloch1}, \cite[\S 6]{Levine1}}] \label{Riemann--Roch1}
For any $X \in \mathbf{Sm}_k$ the higher Chern character maps \eqref{Chern character} induce a multiplicative isomorphism
\begin{equation} \label{Chern isomorphism}
\mathbf{ch}: K_{\bullet}(X) \otimes \mathbb{Q} \xrightarrow{\sim} \bigoplus_{p,n} \CH^p(X,n) \otimes \mathbb{Q}.
\end{equation}
\end{theorem}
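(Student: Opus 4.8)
The plan is to build the map $\mathbf{ch}$ together with its multiplicative structure from Gillet's general formalism, and then to prove that it becomes bijective after $\otimes\,\mathbb{Q}$ by a local-to-global argument that reduces the statement to the case of a field. First, higher Chow groups, regarded as the functor $X\mapsto\bigoplus_{p}\CH^{p}(X,\bullet)$ on $\mathbf{Sm}_k$, satisfy the axioms of a cohomology theory with Chern classes: contravariance for pull-backs and covariance for projective push-forwards are recalled above, the ring structure is the intersection product $\cup$, and the projective bundle formula $\CH^{*}(\mathbb{P}(E),\bullet)\cong\bigoplus_{i=0}^{r}\CH^{*}(X,\bullet)\cdot\xi^{i}$ follows from homotopy invariance and the localization sequence for Bloch's cycle complexes. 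Feeding this into Gillet's machinery (equivalently, Bloch's \S 7 of \cite{Bloch1}) produces higher Chern classes $c_{i}\colon K_{n}(X)\to\CH^{i}(X,n)_{\mathbb{Q}}$, and hence the Chern character $\mathbf{ch}=\sum_{i}\mathbf{ch}_{i}$ via the usual universal polynomials in the $c_{i}$. Additivity of $\mathbf{ch}$ is the Whitney sum formula; compatibility with pull-backs is built in; and multiplicativity $\mathbf{ch}(x\cup y)=\mathbf{ch}(x)\cup\mathbf{ch}(y)$ follows from the splitting principle, which — in the presence of the projective bundle formula — reduces the identity to products of line-bundle classes, where it is $e^{a+b}=e^{a}e^{b}$, and then extends to all of $K_{\bullet}$ by naturality and the $K_{0}$-module structure. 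This already gives the bigraded ring homomorphism $\mathbf{ch}\colon K_{\bullet}(X)_{\mathbb{Q}}\to\bigoplus_{p,n}\CH^{p}(X,n)_{\mathbb{Q}}$, respecting the homological ($n$-)grading.

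Next I would prove bijectivity using the coniveau (niveau) spectral sequences carried by both sides on the smooth variety $X$. For $K$-theory this is Quillen's spectral sequence, which rests on the Gersten resolution available over a field; for higher Chow groups the analogous Gersten-type complex is furnished by Bloch's localization theorem. In both cases the $E_{1}$-term is a direct sum, over the points $x\in X$, of (shifts of) $K$-groups, respectively higher Chow groups, of the residue fields $\kappa(x)$, with differentials assembled from the residue maps of the respective localization sequences. Since $\mathbf{ch}$ is natural and compatible with those localization sequences, it induces a morphism of spectral sequences; as both converge, after $\otimes\,\mathbb{Q}$, to the invariants of $X$, it suffices to show that
\[
\mathbf{ch}\colon K_{n}(F)_{\mathbb{Q}}\ \longrightarrow\ \bigoplus_{p}\CH^{p}(F,n)_{\mathbb{Q}}
\]
is an isomorphism for every finitely generated field extension $F/k$ (these are precisely the residue fields that occur).

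For the field case, one decomposes $K_{n}(F)_{\mathbb{Q}}$ into eigenspaces $K_{n}(F)_{\mathbb{Q}}^{(i)}$ for the Adams operations $\psi^{k}$ (acting by $k^{i}$). By the standard compatibility of the Chern character with the $\gamma$-filtration, $\mathbf{ch}_{i}$ kills $K_{n}(F)_{\mathbb{Q}}^{(j)}$ for $j\neq i$ and is injective on the $i$-th piece, so it is enough to identify $K_{n}(F)_{\mathbb{Q}}^{(i)}$ with $\CH^{i}(F,n)_{\mathbb{Q}}$. This identification — in modern language, the rational degeneration of the motivic spectral sequence $\CH^{i}(F,n)_{\mathbb{Q}}\Rightarrow K_{n}(F)_{\mathbb{Q}}$ over a field — is exactly what Bloch establishes in \S 8 of \cite{Bloch1} (see also \S 6 of \cite{Levine1}). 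Assembling it over all residue fields makes the induced map on $E_{1}$-pages an isomorphism, hence an isomorphism of abutments, so $\mathbf{ch}\colon K_{\bullet}(X)_{\mathbb{Q}}\xrightarrow{\ \sim\ }\bigoplus_{p,n}\CH^{p}(X,n)_{\mathbb{Q}}$; by the first step it is a ring isomorphism.

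The formal parts of this argument — Gillet's construction, the verification of multiplicativity via the splitting principle, and the comparison of coniveau spectral sequences — are routine. The genuine obstacle is the field case: the assertion that higher Chow groups compute, rationally, the Adams eigenspaces of $K$-theory. This is the hard input of Bloch's paper, resting on Grothendieck--Riemann--Roch-type arguments and the $\lambda$-ring structure of $K$-theory together with the localization and moving-lemma techniques for the cycle complexes; I would simply invoke it as \cite[Theorem 9.1]{Bloch1} rather than reprove it.
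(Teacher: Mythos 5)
The paper does not actually prove this statement: it is imported verbatim from Bloch \cite[Theorem 9.1]{Bloch1} and Levine \cite[\S 6]{Levine1}, so there is no internal proof to compare against, and your sketch should be judged as a reconstruction of the cited argument. As such it is essentially faithful — Gillet-style Chern classes built from the projective bundle formula, reduction to residue fields via the coniveau spectral sequences on both sides, and the identification of $\bigoplus_p\CH^p(F,n)_{\mathbb{Q}}$ with the Adams eigenspaces of $K_n(F)_{\mathbb{Q}}$ as the hard input, which you (like the paper) ultimately cite rather than prove. Two steps are stated more loosely than they should be, though neither is fatal. First, multiplicativity of $\mathbf{ch}$ on a product of two \emph{positive-degree} $K$-theory classes does not follow from the splitting principle together with ``naturality and the $K_0$-module structure'' (that only controls products with $K_0$); it requires Gillet's construction of the Chern character at the level of spaces or spectra, compatible with the $K$-theory product up to coherent homotopy. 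Second, $\mathbf{ch}$ does not literally commute with the boundary maps of the localization sequences assembling the coniveau spectral sequences, since those involve push-forwards from closed subvarieties; you must either replace $\mathbf{ch}$ by the Todd-twisted Riemann--Roch transformation $\mathbf{td}(T_X)\cdot\mathbf{ch}$, which is covariant, or note that the purity isomorphisms only introduce multiplication by Todd classes of normal bundles, which are units in the rational Chow ring and hence do not affect whether the map on $E_1$-pages is an isomorphism. With these standard repairs your local-to-global reduction is sound, and the residual field case is exactly the content of the references the paper itself invokes.
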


\begin{remark}
Friedlander--Walker \cite[Theorem 1.10]{FW1} showed that over a field of characteristic $0$, the isomorphism \eqref{Chern isomorphism} can be realized by a map (the Segre map) of infinite loop spaces.

Using exterior powers of vector bundles, Grayson constructed in \cite{Grayson} \textit{$\lambda$-operations} on $K_{\bullet}(X)$ which satisfy the $\lambda$-ring identity \cite[(1.1)]{FL1}. 
As a result, algebraic $K$-theory with rational coefficients splits into a direct sum of weight–graded pieces
$$
K_p(X) \otimes \mathbb{Q} = \bigoplus_q K_p(X)^{(q)}.
$$
In \cite{Levine1}, Levine constructed a morphism $\CH^q(G,p) \otimes \mathbb{Q} \to K_p(X)^{(q)}$ and showed that this is an isomorphism. 
He obtained therefore an isomorphism similar to \eqref{Chern isomorphism} without replying on the higher Chern character.
\end{remark}

Higher Chern characters commute with pull-backs, but do not commute with push-forwards. The lack of commutativity with taking push-forwards is corrected by the Todd classes.
\begin{theorem}[Grothendieck--Riemann--Roch {\cite[Theorem 4.1]{Gillet1}, \cite[Corollary 6.3.2]{Riou1}}] \label{Riemann--Roch2}
Let $f: Y \to X$ be a proper morphism between smooth schemes over $k$, then the following diagram commutes
\[
\begin{tikzcd}
K_{\bullet}(Y) \otimes \mathbb{Q} \arrow[r, "f_{*}"] \arrow[d,"\mathbf{td}(TY)\mathbf{ch}"]
& K_{\bullet}(X) \otimes \mathbb{Q} \arrow[d, "\mathbf{td}(TX) \mathbf{ch}"] \\
\CH^{*}(Y, \bullet) \otimes \mathbb{Q} \arrow[r,  "f_{*}"] & \CH^{*}(X, \bullet) \otimes \mathbb{Q}
\end{tikzcd}
\]
where $\mathbf{td}(TY) \in \CH^{*}(Y) \otimes \mathbb{Q}$ is the Todd class of the tangent bundle of $Y$ and similarly for $\mathbf{td}(TX)$.
\end{theorem}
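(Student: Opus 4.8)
The plan is to reduce the theorem to two elementary cases by factoring $f$, treat each by an explicit computation, and recombine. Since $X$ and $Y$ are smooth and quasi-projective over $k$ and $f$ is proper, $f$ is projective, so a choice of closed embedding of $Y$ into some $\mathbb{P}^{N}_{X}=\mathbb{P}^{N}_{k}\times X$ presents $f$ as a composition $Y\xrightarrow{\,i\,}\mathbb{P}^{N}_{X}\xrightarrow{\,p\,}X$ in which $i$ is a regular closed immersion (of smooth into smooth) and $p$ is the structure projection; all three morphisms are l.c.i. First I would observe that the theorem is equivalent to the \emph{relative} Riemann--Roch identity $\mathbf{ch}(f_{*}\alpha)=f_{*}\!\bigl(\mathbf{td}(N_{f})\cup\mathbf{ch}(\alpha)\bigr)$ for all $\alpha\in K_{\bullet}(Y)\otimes\mathbb{Q}$, where $N_{f}\in K_{0}(Y)$ is the virtual relative tangent class of the l.c.i.\ morphism $f$ and $\mathbf{td}$ is extended multiplicatively to $K_{0}$: indeed $[TY]=N_{f}+f^{*}[TX]$ in $K_{0}(Y)$, so by multiplicativity of the Todd class and the projection formula \eqref{ProjForm1}/\eqref{ProjForm2} one has $f_{*}(\mathbf{td}(TY)\cup\mathbf{ch}(\alpha))=\mathbf{td}(TX)\cup f_{*}(\mathbf{td}(N_{f})\cup\mathbf{ch}(\alpha))$, whence the theorem. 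This relative identity is stable under composition: for $Y\xrightarrow{g}Z\xrightarrow{h}X$ one has $N_{hg}=N_{g}+g^{*}N_{h}$, hence $\mathbf{td}(N_{hg})=\mathbf{td}(N_{g})\cup g^{*}\mathbf{td}(N_{h})$, and chaining the two relative identities together using the projection formula produces the one for $hg$. So it is enough to prove the relative identity for $p$ and for $i$.

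For $p\colon\mathbb{P}^{N}_{X}\to X$ I would invoke the projective bundle formula --- valid for higher Chow groups by Bloch and for higher $K$-theory by Quillen and Thomason--Trobaugh --- which exhibits $K_{\bullet}(\mathbb{P}^{N}_{X})$, resp.\ $\CH^{*}(\mathbb{P}^{N}_{X},\bullet)$, as a free module over $K_{\bullet}(X)$, resp.\ $\CH^{*}(X,\bullet)$, on $[\mathcal{O}(-j)]$, resp.\ $h^{j}$, for $0\le j\le N$. Because $\mathbf{ch}$ is multiplicative and commutes with the flat pullback $p^{*}$ (Theorem \ref{Riemann--Roch1} and the discussion preceding Theorem \ref{Riemann--Roch2}), and both push-forwards are $K_{\bullet}(X)$- resp.\ $\CH^{*}(X,\bullet)$-linear by the projection formulas, it suffices to verify the relative identity on the module generators $[\mathcal{O}(j)]$; by flat base change this reduces to the classical equality $\mathbf{ch}(p_{*}\mathcal{O}(j))=p_{*}\!\bigl(\mathbf{td}(T\mathbb{P}^{N}_{k})\,e^{jh}\bigr)$ over $\operatorname{Spec}k$, which is the finite computation with the Euler sequence $0\to\mathcal{O}\to\mathcal{O}(1)^{\oplus(N+1)}\to T\mathbb{P}^{N}\to 0$ (both sides equal $\chi(\mathbb{P}^{N},\mathcal{O}(j))$).

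For the regular closed immersion $i\colon Y\hookrightarrow\mathbb{P}^{N}_{X}$, with normal bundle $E:=N_{Y}\mathbb{P}^{N}_{X}$ and projection $\pi\colon E\to Y$, the plan is to run deformation to the normal cone --- using the localization long exact sequences and homotopy invariance of higher Chow groups (Bloch) and of higher $K$-theory (Quillen, Thomason) --- to reduce the relative identity for $i$ to the same identity for the zero section $s\colon Y\hookrightarrow E$. For $s$ one has $N_{s}=-[E]$, the Koszul resolution of $\mathcal{O}_{Y}$ on the total space of $E$ gives $s_{*}[\mathcal{O}_{Y}]=\lambda_{-1}(\pi^{*}E^{\vee})$ in $K_{0}(E)$, and under the homotopy invariance isomorphism $\pi^{*}\colon\CH^{*}(Y,\bullet)\xrightarrow{\sim}\CH^{*}(E,\bullet)$ the map $s_{*}$ corresponds to multiplication by $c_{top}(E)$ (Thom isomorphism). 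Feeding these and the projection formula into the relative Riemann--Roch identity for $s$ collapses it, via the isomorphism $\pi^{*}$, to the single identity of characteristic classes $\mathbf{ch}\bigl(\lambda_{-1}(E^{\vee})\bigr)=c_{top}(E)\cup\mathbf{td}(E)^{-1}$ in $\CH^{*}(Y,\bullet)\otimes\mathbb{Q}$, which the splitting principle reduces to the line-bundle case $\mathbf{ch}(1-L^{-1})=1-e^{-c_{1}(L)}=c_{1}(L)\cdot\tfrac{1-e^{-c_{1}(L)}}{c_{1}(L)}$.

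The main obstacle is exactly this deformation-to-the-normal-cone step: one has to know that $\mathbf{ch}$ is compatible with the boundary maps of the localization sequences and with the specialization homomorphisms attached to the deformation space, so that the identity propagates from the special fibre $s$ back to $i$. This compatibility --- for the pair (higher algebraic $K$-theory, motivic cohomology), together with the comparison \cite{Voevodsky1}, \cite{MVW} between higher Chow groups and motivic cohomology --- is precisely what Gillet's general Riemann--Roch theorem for twisted duality theories \cite[Theorem 4.1]{Gillet1} supplies; in the language of motivic ring spectra the same statement is \cite[Corollary 6.3.2]{Riou1}, where the Todd class emerges as the correction relating the additive formal group law on rational motivic cohomology to the multiplicative one on $K$-theory. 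In practice one therefore simply cites these references, the sketch above serving to record the essential inputs and where the real work lies.
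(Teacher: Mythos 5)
The paper offers no proof of this statement at all---it is imported as a background result, with the entire justification residing in the citations to Gillet \cite[Theorem 4.1]{Gillet1} and Riou \cite[Corollary 6.3.2]{Riou1}---and your proposal, after a faithful outline of the standard Borel--Serre-style argument (factorization through $\mathbb{P}^N_X$, the projective-bundle computation, deformation to the normal cone, and the key identity $\mathbf{ch}(\lambda_{-1}(E^{\vee}))=c_{top}(E)\cup\mathbf{td}(E)^{-1}$, which is exactly \eqref{Todd-Chern} as used later in the paper), correctly locates the one genuinely hard input (compatibility of the higher Chern character with localization and specialization) in precisely those references and defers to them. So the proposal is correct and ends up resting on the same sources the paper cites; the sketch is a useful gloss but does not diverge from the paper's treatment.
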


\section{Equivariant K-theory and motivic cohomology}
We collect here some constructions and results in equivariant geometry. 

\subsection{Basic notions for group actions}
Let $G$ be an algebraic group over $k$. Let $X$ be an algebraic variety over $k$ equipped with an action of $G$, that is, a morphism $\sigma: G\times X\to X$ satisfying the usual axioms. Define the \emph{inertia variety} $I_{G}(X)$ by the following cartesian diagram:
\begin{equation} \label{eq:inertia}
\xymatrix{
I_{G}(X) \ar[r]^{\pi}\ar[d] & X\ar[d]^{\Delta}\\
G\times X \ar[r]^{(\sigma, pr_{2})} & X\times X
}
\end{equation}

\begin{definition}\label{def:FiniteStab}
Let the notation be as before. The action of $G$ on $X$ 
\begin{itemize}
\item is called \emph{proper}, if $(\sigma, pr_{2})$ is proper;
\item has \emph{finite stabilizers}, if $\pi$ is finite; 
\item is \emph{quasi-free}, if $\pi$ is quasi-finite.
\end{itemize}
\end{definition}

\begin{remark}
If $G$ is an affine algebraic group, then a proper action must have finite stabilizers.
If $k$ is of characteristic zero, the quotient stack $[X/G]$ is a Deligne--Mumford stack if and only if $G$ acts quasi-freely. 
By Keel--Mori \cite[Corollary 1.2]{MR1432041}, a coarse moduli space of the quotient stack $[X/G]$ exists only if $G$ acts on $X$ with finite stabilizers.
\end{remark}

\subsection{Equivariant K-theory}\label{subsect:EquiK}

Algebraic $K$-theory has a direct generalization in the equivariant setting.
For any algebraic variety $X$ endowed with an action of a group $G$, the category of $G$-equivariant vector bundles on $X$ is again exact in the sense of \cite{DQ1}. 
Quillen's machinery in \emph{loc.cit.}~produces a connected spectrum $K^{G}(X)$ which is called the equivariant algebraic $K$-theory of $X$ \cite{Thomason1}.
The $i$-th equivariant $K$-group $K^G_{i}(X)$ is by definition the $i$-th homotopy group $\pi_{i}(K^{G}(X))$ and we set
$$
K_{\bullet}^{G}(X): = \bigoplus_{i}K_i^{G}(X).
$$

Similar to the non-equivariant case, the tensor product over $\mathcal{O}_{X}$ makes $K_{\bullet}^{G}(X)$ into a associative and graded commutative ring with identity. 
In the case of points, $K_{0}(G,\operatorname{Spec} (k))$ is the representation ring $R(G)$ of $G$ over $k$. 

If $f:X \to Y$ is a morphism of $G$-schemes, the pull-back of equivariant vector bundles defines a ring homomorphism
$$
f^*: K_{\bullet}^{G}(Y) \to K_{\bullet}^G(X).
$$
This makes $K_{\bullet}^{G}$ a contravariant functor on the category of $G$-schemes.
By pulling-back from the spectrum of the base field, we see that $K_{\bullet}^{G}(X)$ is a graded $R(G)$-algebra, and $f^*$ is a $R(G)$-algebra homomorphism. 

If $f:X \to Y$ is a proper morphism between smooth $G$-schemes over $k$, there is a push-forward map, given by the derived direct image map,
$$ 
f_*: K_{\bullet}^G(X) \to K_{\bullet}^G(Y),$$
which makes $K_{\bullet}^G$ a covariant functor on the category of smooth $G$-schemes with proper morphisms. 
On a smooth $G$-scheme, every $G$-coherent sheaf admits a finite resolution by $G$-equivariant locally free sheaves \cite[Corollary 5.8]{Thomason1} so that
the $K$-theory of $G$-equivariant coherent sheaves is homotopy equivalent to the $K$-theory of $G$-equivariant vector bundles. 
By the projection formula \cite[1.11]{Thomason3}, such $f_*$ is a $R(G)$-module homomorphism.

If $G$ acts freely on $X$, the quotient $X \to X/G$ is a principle $G$-bundle. The category of $G$-equivariant coherent sheaves (resp. $G$-equivariant locally free sheaves) on $X$ is equivalent to the category of coherent sheaves (resp. locally free sheaves) on $X/G$.
This induces a natural isomorphism
$$
K_{\bullet}^G(X) \cong K_{\bullet}(X /G).
$$  

Equivariant algebraic $K$-theory have all the formal properties of algebraic $K$ theory such as the projection formula (\cite[1.11]{Thomason3} or \cite[Proposition 6.5 and Remark 6.6]{VV1}) and the equivariant excess intersection formula 
for finite l.c.i morphisms of schemes satisfying resolution property \cite[Theorem 3.8]{Kock1}.
These properties will be used to prove the associativity of the orbifold product for higher algebraic $K$-theory (Theorem \ref{Associativity3}) along the line of Theorem \ref{Associativity1}.

\subsection{Equivariant higher Chow groups}\label{subsect:EquiChow}

Unlike algebraic $K$-theory, the generalization into the equivariant setting of the motivic cohomology is not the naive one. Roughly speaking, the reason is that there are not enough equivariant cycles on the variety itself for many purposes (for instance, an intersection theory).
Fortunately, this problem has been resolved by Edidin--Graham \cite{EG1}, based on ideas of Totaro on algebraic approximations of classifying spaces \cite{Totaro}. 

\begin{definition}\label{def:EquivHighChow}
Let $X$ be a quasi-projective variety together with a linearizable action of an algebraic group $G$.  For any $n, p\in \mathbb{N}$, the \emph{equivariant higher Chow group} $\CH^{p}_{G}(X, n)$ is defined as
$$
\CH^p_G(X,n): = \CH^p\left( (X \times U)/G,n\right)
$$
where $U$ is a Zariski open subset of some $k$-linear representation $V$ of $G$ such that $V-U$ has codimension at least $p+1$ and $G$ acts freely on $U$. 
\end{definition}
Using Bogomolov's double filtration argument \cite[Definition-Proposition 1]{EG1} and homotopy invariance of Bloch's higher Chow groups on quasi-projective varieties \cite[Theorem 2.1]{Bloch1}, 
we can show easily that this definition is independent of the choice of the pair $(U,V)$. Unlike the ordinary case, for a given index $n$, the group $\CH^p_G(X,n)$ might be non-zero for infinitely many $p$.

\begin{remark}
The assumption on the linearizability of the action of $G$ is necessary to obtain a \textit{quasi-projective} quotient $(X \times U)/G$.
This is not a very strict assumption. Since we will work later with normal and projective schemes, any action of a \textit{connected} algebraic group $G$ is linearizable (Sumihiro's equivariant completion \cite{Sumihiro}). 
\end{remark}

Let $\mathcal{P}$ be one of the following properties of morphisms between schemes: proper, flat, smooth, regular embedding, l.c.i. 
Then equivariant higher Chow groups have the same functorialities as ordinary higher Chow groups for equivariant $\mathcal{P}$ morphisms. 
Indeed, if $f: X \to Y$ is a morphism which satisfies one of these properties, so does the map $f \times \operatorname{id}_{U}: X \times U \to Y \times U$. 
Since $G$ acts freely on $X \times U$ and $Y \times U$, the morphisms $X \times U \to (X \times U)/G$ and $Y \times U \to (Y \times U)/G$ are faithfully flat. 
Moreover, the diagram
\[
\begin{tikzcd}
X \times U \arrow[r, "f \times \operatorname{id}"] \arrow[d]
& Y \times U  \arrow[d] \\
(X \times U)/G\arrow[r, "\overline{f \times \operatorname{id}}"] & (Y \times U)/G
\end{tikzcd}
\]
is Cartesian. Hence by the flat descent \cite{SGA1}, the induced morphism $\overline{f \times \operatorname{id}}: (X \times U)/G \to (Y \times U)/G$ satisfies the same property $\mathcal{P}$.
In particular, the total equivariant higher Chow group has a multiplicative structure and satisfies the projection formula.

If $X \to Y$ is a regular embedding of $G$-schemes over $k$, then the normal bundle $N$ is equipped with a natural action of $G$, and $(N \times U)/G$ is the normal bundle of $(X \times U)/G \to (Y \times U)/G$.
If $E$ is the excess normal bundle of \eqref{eq:excessl.c.i}, then $E$ is a $G$-vector bundle on $X'$, and $(E \times U)/G$ is the excess normal bundle of $(\eqref{eq:excessl.c.i} \times U)/G$. Hence the excess intersection formula holds also for equivariant higher Chow groups. 

When $G$ acts freely on $X$, the projection $X \to X/G$ induces an isomorphism
$$
\CH^p_G(X,n) \cong \CH^p(X/G,n).
$$
If $G$ acts on $X$ with finite stabilizer, the quotient stack $\mathcal{X}:=[X/G]$ is a Deligne--Mumford stack. By the definition in \cite{Kresch}, the Chow group $\CH^{i}(\mathcal{X})$ is nothing else but the equivariant Chow group $\CH^{i}_{G}(X)$. More generally, we set $\CH^{*}(\mathcal{X}, \bullet)=\CH^{*}_{G}(X, \bullet)$. 

\subsection{Equivariant Riemann--Roch}

Define the equivariant higher Chern character map
$$
\mathbf{ch}_n^{G,X}: K^{G}_{n}(X) \to \prod_{p \ge 0} \CH^p_G(X,n) \otimes \mathbb{Q}
$$
whose $p$-th component  $\mathbf{ch}_{n}^{G,X}(p): K_n^{G}(X) \to \CH^p_G(X,n)$ is the composition
$$
K_{n}^{G}(X) \xrightarrow{\pi^*} K_{n}^{G}(X \times U) \xrightarrow{\sim} K_{n}((X \times U)/G) \xrightarrow{\mathbf{ch}_{n}^{(X \times U)/G}(p)} \CH^p((X\times U)/G,n) \otimes \mathbb{Q}
$$
where $U$ is as in Definition \ref{def:EquivHighChow},  $\pi^*$ is the pull-back along the projection $X \times U \to X$ and $\mathbf{ch}_{n}^{(X \times U)/G}(p)$ is the $p$-th component of the higher Chern character $\mathbf{ch}_n^{(X\times U)/G}$ in \eqref{Chern character}.
By Bogomolov's double filtration argument and the homotopy invariance of higher Chow groups, this map is well-defined and independent of the choice of $U$.
These maps resemble to yield a morphism
$$
\mathbf{ch}^{G,X}: K_{\bullet}^G(X)=\bigoplus_{n} K_{n}^{G}(X) \to \bigoplus_n \prod_{p} \CH^p_G(X,n) \otimes \mathbb{Q}.
$$
which is also called the equivariant higher Chern character. 
As direct consequences of the corresponding properties in the non-equivariant setting (see Theorem \ref{Riemann--Roch1} or \cite{Bloch1}), this map is a ring homomorphism and commutes with pull-backs. 
If $G$ acts freely on $X$, we can identify $\mathbf{ch}_n^{G,X}$ with $\mathbf{ch}_n^{X/G}$.

Krishna generalized Theorem \ref{Riemann--Roch2} to the equivariant setting:
\begin{theorem} [Equivariant Riemann--Roch {\cite[Theorem 1.4]{Krishna1}}]
Let $f: Y \to X$ be a proper equivariant morphism between smooth quasi-projective $G$-schemes over $k$, then the following diagram commutes
\[
\begin{tikzcd}
K_{\bullet}^{G}(Y) \otimes \mathbb{Q} \arrow[r, "f_{*}"] \arrow[d,"\mathbf{td}^G(TY)\mathbf{ch}^{G,Y}"]
& K_{\bullet}^{G}(X) \otimes \mathbb{Q} \arrow[d, "\mathbf{td}^G(TX) \mathbf{ch}^{G,X}"] \\
\CH^{*}_{G}(Y, \bullet) \otimes \mathbb{Q} \arrow[r,  "f_{*}"] & \CH^{*}_{G}(X, \bullet) \otimes \mathbb{Q}
\end{tikzcd}
\]
where $\mathbf{td}^G(TY) \in \CH^{*}_{G}(Y) \otimes \mathbb{Q}$ is the equivariant Todd class of the tangent bundle $TY$ of $Y$.
\end{theorem}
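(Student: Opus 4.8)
The plan is to reduce the assertion to the non-equivariant Grothendieck--Riemann--Roch theorem (Theorem~\ref{Riemann--Roch2}) applied to Edidin--Graham's algebraic approximations; this is in essence the strategy of \cite{Krishna1}. Since the target $\bigoplus_{n}\prod_{p}\CH^{p}_{G}(X,n)\otimes\mathbb{Q}$ is a product over $p$, it suffices to check commutativity after projecting to each weight-$p$ factor. Fix $p$ and, following Definition~\ref{def:EquivHighChow}, choose a representation $V$ of $G$ and a $G$-stable open $U\subseteq V$ on which $G$ acts freely, with $\operatorname{codim}_{V}(V\setminus U)$ large enough that all the equivariant higher Chow groups occurring below (on both $X$ and $Y$, accounting for the relative dimension of $f$) are computed by this $U$. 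Put $Y_{U}:=(Y\times U)/G$ and $X_{U}:=(X\times U)/G$, and let $\bar f\colon Y_{U}\to X_{U}$ be the morphism induced by $f\times\operatorname{id}_{U}$. Since $Y,X$ are smooth quasi-projective and the quotient maps $Y\times U\to Y_{U}$, $X\times U\to X_{U}$ are $G$-torsors (hence faithfully flat), the schemes $Y_{U},X_{U}$ are smooth quasi-projective; moreover $\bar f$ is proper by the flat-descent argument recalled in \S\ref{subsect:EquiChow}. Thus Theorem~\ref{Riemann--Roch2} applies to $\bar f$.

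The next step is to transport the four maps in the square onto $Y_{U},X_{U}$. By definition $\CH^{*}_{G}(Y,\bullet)=\CH^{*}(Y_{U},\bullet)$ and the equivariant push-forward $f_{*}$ on higher Chow groups \emph{is} $\bar f_{*}$; and by the very definition of the equivariant higher Chern character, $\mathbf{ch}^{G,Y}$ (in the relevant weight range) equals $\mathbf{ch}^{Y_{U}}$ precomposed with the isomorphism $K^{G}_{\bullet}(Y)\xrightarrow{\pi^{*}}K^{G}_{\bullet}(Y\times U)\xrightarrow{\ \sim\ }K_{\bullet}(Y_{U})$, similarly on $X$. Under these isomorphisms the equivariant push-forward $f_{*}$ on $K$-theory corresponds to $\bar f_{*}$: this follows from flat base change for the Cartesian square with horizontal maps $f,\,f\times\operatorname{id}_{U}$ and flat vertical projections, together with compatibility of the equivalence ``$G$-equivariant coherent sheaves on $Y\times U$'' $\simeq$ ``coherent sheaves on $Y_{U}$'' with derived push-forward. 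Finally, for a $G$-equivariant bundle $E$ on $Y$ the equivariant Todd class $\mathbf{td}^{G}(E)$ is by definition $\mathbf{td}$ of the descended bundle $(E\times U)/G$ on $Y_{U}$.

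The one point that needs care — and the main obstacle — is that $T_{Y_{U}}$ is \emph{not} the descent of the equivariant tangent bundle $TY$. From the torsor exact sequence $0\to\mathfrak{g}\otimes\mathcal{O}_{Y\times U}\to T_{Y\times U}\to q_{Y}^{*}T_{Y_{U}}\to 0$ and the splitting $T_{Y\times U}=\operatorname{pr}_{Y}^{*}TY\oplus(\mathcal{O}_{U}\otimes V)$, one obtains in $K_{0}(Y_{U})$ the identity $[T_{Y_{U}}]=[(TY\times U)/G]+[\mathcal{V}_{Y_{U}}]-[\mathcal{G}_{Y_{U}}]$, where $\mathcal{V}_{Y_{U}}$ and $\mathcal{G}_{Y_{U}}$ are the bundles on $Y_{U}$ associated with the representations $V$ and $\mathfrak{g}=\operatorname{Lie}(G)$; crucially $\mathcal{V}_{Y_{U}}=\bar f^{*}\mathcal{V}_{X_{U}}$ and $\mathcal{G}_{Y_{U}}=\bar f^{*}\mathcal{G}_{X_{U}}$, since these ``universal'' bundles are pulled back from $X_{U}$ along $\bar f$. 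Hence $\mathbf{td}(T_{Y_{U}})=\mathbf{td}^{G}(TY)\cdot\bar f^{*}\big(\mathbf{td}(\mathcal{V}_{X_{U}})\,\mathbf{td}(\mathcal{G}_{X_{U}})^{-1}\big)$ and likewise $\mathbf{td}(T_{X_{U}})=\mathbf{td}^{G}(TX)\cdot\mathbf{td}(\mathcal{V}_{X_{U}})\,\mathbf{td}(\mathcal{G}_{X_{U}})^{-1}$ in $\CH^{*}(-)\otimes\mathbb{Q}$. Now I would write out Theorem~\ref{Riemann--Roch2} for $\bar f$ on a class coming from $\xi\in K^{G}_{\bullet}(Y)\otimes\mathbb{Q}$, substitute these two expressions, pull the $\bar f^{*}$-factor out of the push-forward using the projection formula~\eqref{ProjForm2}, and cancel the common unit $\mathbf{td}(\mathcal{V}_{X_{U}})\,\mathbf{td}(\mathcal{G}_{X_{U}})^{-1}\in\CH^{*}(X_{U})\otimes\mathbb{Q}$: what survives is precisely $\mathbf{td}^{G}(TX)\cdot\mathbf{ch}^{G,X}(f_{*}\xi)=f_{*}\big(\mathbf{td}^{G}(TY)\cdot\mathbf{ch}^{G,Y}(\xi)\big)$ in weight $p$. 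Since the equivariant Chern character, Todd classes and push-forwards are independent of the choice of $(V,U)$ by Bogomolov's double-filtration argument, letting $p$ range over $\mathbb{N}$ gives the full statement; everything beyond the tangent-bundle bookkeeping is a routine transport of structure along the approximation.
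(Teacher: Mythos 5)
The paper does not prove this statement at all: it is quoted verbatim from Krishna \cite[Theorem 1.4]{Krishna1}, so there is no internal proof to compare against. Your reduction to the non-equivariant Grothendieck--Riemann--Roch theorem (Theorem \ref{Riemann--Roch2}) via the Edidin--Graham approximation is the standard route and, as far as I can check, it is correct: the definitions in \S\ref{subsect:EquiChow} and of $\mathbf{ch}^{G,-}$ really do identify all four maps of the square, in each fixed finite range of weights, with their non-equivariant counterparts on $Y_U$ and $X_U$; and you correctly isolate the only genuinely non-formal point, namely that $[T_{Y_U}]$ differs from the descent of $[TY]$ by $[\mathcal{V}_{X_U}]-[\mathcal{G}_{X_U}]$ pulled back along $\bar f$, so that the discrepancy between $\mathbf{td}(T_{Y_U})$ and $\mathbf{td}^G(TY)$ is a unit pulled back from $X_U$ and cancels against the corresponding discrepancy on $X_U$ after the projection formula. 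Two cosmetic remarks: the phrase ``under these isomorphisms'' is loose, since $\pi^*\colon K^G_\bullet(Y)\to K^G_\bullet(Y\times U)$ is not an isomorphism --- but commutativity of the comparison square (flat base change in $K$-theory plus compatibility of the descent equivalence with derived push-forward) is all you actually use, and that part is fine; and one should say explicitly that only finitely many weights on $Y_U$ contribute to the weight-$p$ component on $X_U$ (because proper push-forward shifts codimension by the fixed relative dimension), which is exactly why a single sufficiently large $U$ suffices --- you gesture at this and it is easily made precise.
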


To generalize Theorem \ref{Riemann--Roch1}, we need to recall the completion construction. Let $I_G \subset R(G)=K_{\bullet}^{G}(pt)$ be the augmentation ideal, that is, the kernel of $\dim: R(G)\to \mathbb{Z}$. For any $m \in \mathbb{N}$, we have
$$
\mathbf{ch}^{G,pt}(I_G^m) \subset \prod_{p = m}^{\infty} \CH^p_G(pt) \otimes \mathbb{Q}.
$$
Hence $\mathbf{ch}^{G,X} (I_G^m K^G_\bullet(X)) \subset \prod _{p = m}^{\infty} \CH^p_G(X, \bullet)$ by multiplicative property of the higher Chern character. 
This means that the Chern character map induces a morphism
$$
K^G_\bullet(X) / I_G^mK^G_\bullet(X) \to \prod_{p = 0}^m \CH_G^p(X, \bullet) \otimes \mathbb{Q}.
$$
Taking the projective limit over $m \in \mathbb{N}$ gives rise to a morphism
$$
K^G_\bullet(X)^{\wedge} \to \prod_{p=0}^{\infty} \CH^p_G (X, \bullet) \otimes \mathbb{Q}
$$
where $K^G_\bullet(X)^{\wedge}=\varprojlim_{m} K^G_\bullet(X) / I_G^mK^G_\bullet(X)$ is the completion of $K^G_\bullet(X)$ with respect to the $I_G$-adic topology.

When $G$ acts on $X$ with finite stabilizers (Definition \ref{def:FiniteStab}), the group $\CH^p_G(X,n) \otimes \mathbb{Q}$ is generated by invariant codimension $p$ cycles on $X \times \Delta^n$, hence it vanishes for $p$ sufficiently large \cite[Proposition 7.2]{Krishna1}.
So we can identify the infinite direct product $\prod_{p \ge 0}\CH^{p}_{G}(X,n) \otimes \mathbb{Q}$
with the direct sum $\bigoplus_{p \ge 0} \CH^{p}_{G}(X,n)$. 

Now we can state Krishna's generalization of Theorem \ref{Riemann--Roch1} to the equivariant setting:
\begin{theorem} [Atiyah--Segal's completion {\cite[Theorem 4.6]{Krishna1}}] \label{Completion}
If $G$ acts on $X$ with finite stabilizers, the equivariant higher Chern character factors through an isomorphism
$$
\mathbf{ch}^{G,X}: K_{\bullet}^G(X)^{\wedge} \otimes \mathbb{Q} \to  \bigoplus_{p,n} \CH^p_G(X,n) \otimes \mathbb{Q}.
$$
\end{theorem}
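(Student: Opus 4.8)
The plan is to reduce the equivariant statement to the non-equivariant Riemann--Roch isomorphism of Theorem~\ref{Riemann--Roch1}, applied on the Borel-type approximations appearing in Definition~\ref{def:EquivHighChow}, and then to compare inverse limits on the $K$-theory and Chow sides. Fix an exhausting system of approximations $U_j\subseteq V_j$ ($V_j$ a $k$-representation of $G$, $U_j$ open with $G$ acting freely), chosen compatibly and so that $c_j:=\operatorname{codim}_{V_j}(V_j\setminus U_j)\to\infty$; write $X_j:=(X\times U_j)/G$, a smooth quasi-projective $k$-variety, and let $\phi_j\colon K^G_\bullet(X)\to K^G_\bullet(X\times U_j)\xrightarrow{\sim}K_\bullet(X_j)$ be the canonical maps. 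By the construction of the equivariant higher Chern character recalled above, $\mathbf{ch}^{G,X}$ is, component by component, the composite of $\phi_j$ with $\mathbf{ch}^{X_j}$ followed by truncation to codimension $<c_j$; and on each $X_j$ Theorem~\ref{Riemann--Roch1} gives a multiplicative isomorphism $\mathbf{ch}^{X_j}\colon K_\bullet(X_j)\otimes\mathbb{Q}\xrightarrow{\sim}\bigoplus_{p,n}\CH^p(X_j,n)\otimes\mathbb{Q}$, compatible with the transition maps of the tower.

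Next I would pin down the inverse system on the Chow side. By Definition~\ref{def:EquivHighChow} one has $\CH^p(X_j,n)=\CH^p_G(X,n)$ whenever $p<c_j$, the transition maps being in that range the canonical identifications furnished by the Bogomolov double-filtration argument. Here the finite-stabilizer hypothesis enters via \cite[Proposition 7.2]{Krishna1}: there is a bound $P_0$, independent of $j$, with $\CH^p_G(X,n)\otimes\mathbb{Q}=0$ for $p\ge P_0$. Choosing the system so that $c_{j+1}>\dim X_j$, one checks that for $j$ large (so that $c_j>P_0$) the transition map $K_\bullet(X_{j+1})\otimes\mathbb{Q}\to K_\bullet(X_j)\otimes\mathbb{Q}$ has image precisely $\bigoplus_{p<P_0}\CH^p_G(X,\bullet)\otimes\mathbb{Q}$ and restricts to the identity there, every class of codimension $\ge c_j$ on $X_j$ being hit by zero. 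Thus the pro-system $\{K_\bullet(X_j)\otimes\mathbb{Q}\}_j$ is pro-isomorphic to the constant system $\bigoplus_{p,n}\CH^p_G(X,n)\otimes\mathbb{Q}$; in particular it satisfies Mittag--Leffler, its $\varprojlim^1$ vanishes, and $\varprojlim_j\mathbf{ch}^{X_j}$ is an isomorphism $\varprojlim_j K_\bullet(X_j)\otimes\mathbb{Q}\xrightarrow{\sim}\bigoplus_{p,n}\CH^p_G(X,n)\otimes\mathbb{Q}$.

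It then remains to identify the source of $\mathbf{ch}^{G,X}$, i.e. to show that the $\phi_j$ assemble to an isomorphism $K^G_\bullet(X)^{\wedge}\otimes\mathbb{Q}\xrightarrow{\sim}\varprojlim_j K_\bullet(X_j)\otimes\mathbb{Q}$, where $(-)^{\wedge}$ denotes $I_G$-adic completion. One direction is cheap: since each $\phi_j$ is $R(G)$-linear and $I_G^{N}$ acts by zero on $K_\bullet(X_j)\otimes\mathbb{Q}$ as soon as $N>\dim(U_j/G)$ (apply $\mathbf{ch}^{X_j}$ and the vanishing $\CH^{p}(X_j,\bullet)=0$ for $p>\dim X_j$), the maps factor through the completion and induce a well-defined map on $K^G_\bullet(X)^{\wedge}\otimes\mathbb{Q}$. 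That this map is an \emph{isomorphism} is the algebraic Atiyah--Segal completion theorem, and this is where I expect the genuine difficulty to lie; rather than reprove it I would invoke it in the form established by Krishna (in the classical $K_0$ and finite-group situation it is due to Thomason). Its content is a comparison of the $I_G$-adic filtration on $K^G_\bullet(X)$ with the filtration by the kernels $\ker\phi_j$, carried out by feeding the localization sequence for the open immersions $X\times U_j\hookrightarrow X\times V_j$ against the codimension bound $c_j\to\infty$, together with the identification $K^G_\bullet=K_\bullet$ of the quotient for free actions. Granting this comparison, composing with the isomorphism of the previous paragraph yields the desired $\mathbf{ch}^{G,X}\colon K^G_\bullet(X)^{\wedge}\otimes\mathbb{Q}\xrightarrow{\sim}\bigoplus_{p,n}\CH^p_G(X,n)\otimes\mathbb{Q}$, multiplicativity being inherited from that of each $\mathbf{ch}^{X_j}$.
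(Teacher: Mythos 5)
The paper gives no independent proof of this statement---it is quoted verbatim as Krishna's Theorem 4.6---and your argument likewise defers its essential content, the identification $K^G_\bullet(X)^{\wedge}\otimes\mathbb{Q}\simeq\varprojlim_j K_\bullet(X_j)\otimes\mathbb{Q}$, to Krishna's completion theorem, so the two are essentially the same. The scaffolding you add around that citation (reduction to Theorem \ref{Riemann--Roch1} on the Borel approximations $X_j$, and the stabilization of the Chow-side pro-system using the finite-stabilizer vanishing from \cite[Proposition 7.2]{Krishna1}) is correct and consistent with how the paper constructs $\mathbf{ch}^{G,X}$.
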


\section{Orbifold theories: global quotient by a finite group}\label{sect:FiniteGroupQuotient}
In this section, we construct the stringy and orbifold K-theory and motivic cohomology for Deligne--Mumford stacks in the more restricted case of a global quotient of a smooth projective variety by a \emph{finite} group action. We follow \cite{JKK1} when developing this theory.
From now on, the base field $k$ is the field of complex numbers $\mathbb{C}$. Higher $K$-groups and higher Chow groups are always with rational coefficients.

\subsection{Higher inertia varieties}\label{subsect:HigherInertia}
Let $X$ be a smooth projective variety over $k$ endowed with a left action of a finite group $G$. For any $g\in G$, let $X^{g}$ be the fixed locus of $g$. 
More generally, for any subgroup $H \subset G$, denote the fixed locus of $H$ in $X$ by $X^H$, i.e., $X^{H}$ is the biggest closed subscheme, with the reduced scheme structure, of $X$ on which $H$ acts trivially. 
By our assumption, $X^{H}$ is smooth over $k$ \cite[Proposition 3.4]{BE1}. If $\mathbf{g} = (g_1, \ldots, g_n) \in G^n$, we will write $X^{\mathbf{g}}$, or sometime $X^{g_1, \ldots, g_n}$, for the fixed locus $X^{\langle g_1, \ldots, g_n \rangle}$.

Recall from \eqref{eq:inertia} that the \textit{inertia variety} of $X$ (with respect to $G$) is defined to be the disjoint union
$$
I_G(X): = \coprod_{g \in G} X^{g} \subset G \times X.
$$
We equip $I_G(X)$ with the $G$-action given by $h.(g,x)= (hgh^{-1}, hx)$.

Similarly, for any $n \in \mathbb{N}_{>0}$ the \textit{$n$-th inertia variety} $I^{n}_G(X)$ of $X$ (with respect to $G$) is
$$
I^{n}_G(X): = \coprod_{(g_1, \ldots, g_n) \in G^n} X^{( g_1, \ldots, g_n)} \subset G^n \times X,
$$
equipped with the action of $G$ given by
\begin{equation} \label{Action} 
h.(g_1, \ldots, g_n, x)  = (hg_1h^{-1}, \ldots, h g_nh^{-1}, hx).
\end{equation}
For convenience, we set $I^0_G(X): = X$. In the case of \textit{abelian groups}, $I^{n+1}_G(X)$ is obviously the inertia variety of $I^n_G(X)$. 

There are \textit{face maps}
\begin{equation} \label{eq:face}
\begin{split}
f^n_i \colon I^{n}_G(X) & \to I^{n-1}_G(X) \\
(g_1, \ldots, g_n, x) & \mapsto (g_1, \ldots, g_{i-1} g_i, \ldots, g_n, x)
\end{split}
\end{equation}
for $1 \le i \le n$, and \textit{degeneracy maps}
\begin{align*}
d^n_i \colon I^{n}_G(X) & \to I^{n+1}_G(X) \\
(g_1, \ldots, g_n, x) & \mapsto (g_1, \ldots, g_{i-1}, 1_{G}, g_i, \ldots, g_n, x)
\end{align*}
for $1 \le i \le n+1$. It is straightforward to check that $I^{\bullet}_G(X)$ together with these maps form a simplicial scheme.
In the case of a point with the trivial action, $I^{\bullet}_G(\operatorname{pt})$ is the simplicial group defining the classifying space of $G$.

For $1 \le i \le n$, there are also \textit{evaluation maps}
\begin{equation} \label{eq:ev}
\begin{split} 
e^n_i \colon I^{n}_G(X) & \to I^{n-1}_G(X) \\
(g_1, \ldots, g_n, x) & \mapsto (g_1, \ldots, \hat{g_i}, \ldots, g_n, x)
\end{split}
\end{equation}
and the involution map
\begin{align*}
\sigma: I^n_G(X) & \to I^n_G(X)\\
(g_1, \ldots, g_n ,x) & \mapsto (g_1^{-1}, \ldots, g_n^{-1}, x)
\end{align*}
All these maps defined above are obviously equivariant via the action \eqref{Action}.

For our purpose, we will not use the simplicial structure of $I^{\bullet}_G(X)$ but only work with the inertia varitey $I_{G}(X)$ and the double inertia variety $I^2_G(X)$.
We will see later that the maps $e^2_1, \, e^2_2, \, f^2_2, \,\sigma$ together with the obstruction bundle on $I^2_G(X)$ are enough to obtain new interesting invariants on $I_G(X)$. 
Nevertheless, there is no doubt that understanding higher inertia varieties together with their simplical structure will gives us a more complete picture about stringy (and later, orbifold) theories.     
\begin{remark} [Convention]
We always understand that $X^g$ is $\{g \} \times X^{g} \subset I_G(X)$. We write $X^{g,h}$ for $\{(g,h)\} \times X^{\langle g,h\rangle}$ in $I^2_G(X)$. 
These conventions keep track of components of the (double) inertia variety.
\end{remark}
We set $e_1 := e^2_2, \, e_2 := e^2_1$ and $\mu : = f^2_2$. 
With this convention, the evaluation maps $e_i: I^{2}_G(X) \to I_G(X)$ are the disjoint union of the inclusions $X^{\langle g_1, g_2\rangle} \hookrightarrow X^{g_i}$ for $i = 1, \, 2$, and the \textit{multiplication}
$\mu \colon I^2_G(X) \to I_G(X)$ is the disjoint union of the inclusions $X^{\langle g, h\rangle} \hookrightarrow X^{gh}$. 
It is shown in \cite{EJK1} that by passing to the quotients by $G$, the three maps $e_1, e_2, \sigma \circ \mu$ give the evaluation maps 
$$
e_1, e_2, e_3 \colon \mathcal{K}_{0,3}(\mathcal{X},0) \rightarrow \mathcal{I}_{\mathcal{X}}  
$$
from the stack $\mathcal{K}_{0,3}(\mathcal{X},0)$ of three pointed genus-0 degree-0 twisted stable maps to $\mathcal{X}$, to the inertia stack $\mathcal{I}_{\mathcal{X}}$ of $\mathcal{X}$ studied in \cite{AGV1}.

\subsection{Stringy K-theory}

\begin{definition}
The \textit{stringy K-theory} $K(X,G)$ of $X$, as a graded vector space, is defined to be the rational $K$-theory of its inertia variety, i.e.,
$$
K_{\bullet}(X,G): = K_{\bullet}(I_G(X)) = \prod_{g \in G} K_{\bullet}(X^g).
$$
The groups $G$ acts on $K_{\bullet}(X,G)$ via their actions on $I_G(X)$.
\end{definition}

With the usual multiplication given by tensor product of vector bundles, it is well-known that there is an isomorphism of $R(G)$-algebras
$$
K_{\bullet}^{G}(X)\otimes \mathbb{C} \cong \left(K_{\bullet}(I_G(X)) \otimes \mathbb{C}\right)^G,
$$
see \cite[Theorem 5.4]{VV1} or \cite[Theorem 1]{AV1}.
\begin{definition} \label{Logarithmic1}
Define $\Im \in K_0(I_G(X))$ (the rational $K_0$-group) to be such that for any $g \in G$, its restriction $\Im_g$ in $K_0(X^g)$ is given by
$$
\Im_g:= \Im|_{X^{g}}: = \sum_{k} \ \alpha_k [W_{g,k}],
$$
where $0 \le \alpha_k <1$ are rational numbers such that $\operatorname{exp}(2\pi i \alpha_k)$ are the eigenvalues of $g$ on the normal bundle $N_{X^g} X$ and 
$W_{g,k}$ are the corresponding eigenbundles.
\end{definition}

It is straightforward to check that
\begin{equation} \label{Equation6}
\Im_g + \sigma^{*} \Im_{g^{-1}} = [N_{X^g} X]
\end{equation}
in $K_0(X^g)$.

\begin{definition}[Age]\label{def:age}
Notation is as before, the \emph{age function}, denoted by $\age(g)$, is the locally constant function on $X^{g}$ defined by $\rk(\Im_{g})$.
\end{definition}

\begin{definition}[Obstruction bundle] \label{def:obstruction}
The obstruction bundle class $\mathcal{R}$ is the element in $K_0(I^2_G(X))$ whose restriction to $ K_0 (X^{\mathbf{g}})$ is given by
\begin{align*}
\mathcal{R} (g_1, g_2): & = \Im_{g_1}|_{X^{\mathbf{g}}} + \Im_{g_2}|_{X^{\mathbf{g}}} + \Im_{(g_1g_2)^{-1}}|_{X^{\mathbf{g}}} - [N_{X^{\mathbf{g}}}X] \\
& = e_1^*(\Im_{g_1}) + e_2^*(\Im_{g_2}) + (\sigma \circ \mu)^*(\Im_{(g_1g_2)^{-1}}) - [N_{X^{\mathbf{g}}}X]
\end{align*}
for any $\mathbf{g}=(g_1, g_2) \in G^2$.
\end{definition}

Jarvis--Kaufmann--Kimura have shown \cite[Theorem 8.3]{JKK1} that $\mathcal{R}(\mathbf{g})$ is represented by a vector bundle on $X^{\mathbf{g}}$ 
which is the obstruction bundle of Fantechi--Göttsche for stringy cohomology \cite{FG1}. In particular, $\mathcal{R}(\mathbf{g})$ is a positive element in $ K_0 (X^{\mathbf{g}})$. 

\begin{definition}[Stringy product]
Given $(g_1, g_2) \in G^2$. For any $x \in K_{\bullet}(X^{g_1})$ and $y \in K_{\bullet}(X^{g_2})$, we define the stringy product of $x$ and $y$ to be
\begin{equation} \label{Stringy product K theory} 
x \star y: = \mu_{*}\left(e^{*}_{1}x \cup e^{*}_{2}y \cup \lambda_{-1}\left(\mathcal{R}(g_1, g_2)^{\vee}\right)\right) \in K_{\bullet}(X^{g_1g_2})
\end{equation}
which is extended linearly to a product on $K_{\bullet}(X,G)$.
\end{definition}

\begin{theorem} \label{Associativity1}
The product \eqref{Stringy product K theory} is associative (but may not be commutative).
\end{theorem}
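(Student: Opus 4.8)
The plan is to follow Jarvis--Kaufmann--Kimura: reduce the associativity of \eqref{Stringy product K theory} to an identity of obstruction classes supported on the triple fixed loci, and deduce that identity by unwinding the definition with the projection formula (Proposition~\ref{Projection formula}) and the excess intersection formula (Proposition~\ref{Excess intersection formula}), together with some elementary normal-bundle bookkeeping. Fix $g_1,g_2,g_3\in G$ and classes $x\in K_\bullet(X^{g_1})$, $y\in K_\bullet(X^{g_2})$, $z\in K_\bullet(X^{g_3})$; by bilinearity it suffices to prove $(x\star y)\star z=x\star(y\star z)$ in $K_\bullet(X^{g_1g_2g_3})$. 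Write $X^{\mathbf{g}}:=X^{\langle g_1,g_2,g_3\rangle}$ for the triple fixed locus, equipped with the evaluation maps $\tilde{e}_i\colon X^{\mathbf{g}}\hookrightarrow X^{g_i}$ ($i=1,2,3$) and the multiplication $\tilde{\mu}\colon X^{\mathbf{g}}\hookrightarrow X^{g_1g_2g_3}$; all of these, like the maps $e_i$ and $\mu$ between (double) inertia varieties, are closed regular immersions of smooth projective $\mathbb{C}$-varieties, hence projective l.c.i.\ morphisms, so the formalism of \S\ref{subsect:Ktheory} applies to them. The geometric point is that $X^{\mathbf{g}}$ is \emph{simultaneously} the scheme-theoretic fibre product of $\mu\colon X^{g_1,g_2}\to X^{g_1g_2}$ with the first evaluation map of the double inertia component $X^{g_1g_2,g_3}$ (mapping it to $X^{g_1g_2}$), and the scheme-theoretic fibre product of $\mu'\colon X^{g_2,g_3}\to X^{g_2g_3}$ with the second evaluation map of $X^{g_1,g_2g_3}$ (mapping it to $X^{g_2g_3}$); indeed, over $\mathbb{C}$ all fixed loci are smooth and, being \'etale-locally modelled on linear subspaces of a linear representation, intersect cleanly, so these fibre products are reduced and agree with $X^{\mathbf{g}}$. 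Write $E_1,E_2\in K_0(X^{\mathbf{g}})$ for the corresponding excess normal bundles.

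The computation then goes as follows. Denote by $\bar{e}_1\colon X^{g_1g_2,g_3}\to X^{g_1g_2}$, $\bar{e}_2\colon X^{g_1g_2,g_3}\to X^{g_3}$ and $\bar{\mu}\colon X^{g_1g_2,g_3}\to X^{g_1g_2g_3}$ the evaluation and multiplication maps of the double inertia component $X^{g_1g_2,g_3}$. From \eqref{Stringy product K theory}, $x\star y=\mu_*\big(e_1^*x\cup e_2^*y\cup\lambda_{-1}(\mathcal{R}(g_1,g_2)^\vee)\big)\in K_\bullet(X^{g_1g_2})$, and hence $(x\star y)\star z=\bar{\mu}_*\big(\bar{e}_1^*(x\star y)\cup\bar{e}_2^*z\cup\lambda_{-1}(\mathcal{R}(g_1g_2,g_3)^\vee)\big)$. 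Rewrite $\bar{e}_1^*(x\star y)=\bar{e}_1^*\mu_*(\cdots)$ via the excess intersection formula as a push-forward along $j_1\colon X^{\mathbf{g}}\to X^{g_1g_2,g_3}$, push the remaining factors inside $(j_1)_*$ by the projection formula, and use that pull-back commutes with duals and with $\lambda$-operations, together with the obvious factorizations $e_i\circ j_2=\tilde{e}_i$ (with $j_2\colon X^{\mathbf{g}}\to X^{g_1,g_2}$, $i=1,2$), $\bar{e}_2\circ j_1=\tilde{e}_3$ and $\bar{\mu}\circ j_1=\tilde{\mu}$. This gives
\[
(x\star y)\star z=\tilde{\mu}_*\big(\tilde{e}_1^*x\cup\tilde{e}_2^*y\cup\tilde{e}_3^*z\cup\lambda_{-1}(A_1^\vee)\big),\qquad A_1:=E_1+\mathcal{R}(g_1,g_2)|_{X^{\mathbf{g}}}+\mathcal{R}(g_1g_2,g_3)|_{X^{\mathbf{g}}}\in K_0(X^{\mathbf{g}}),
\]
where one invokes $\lambda_{-1}(a^\vee)\cup\lambda_{-1}(b^\vee)=\lambda_{-1}((a+b)^\vee)$ and the fact that each $\lambda_{-1}(\cdot)$ lies in $K_0$, hence is central in the graded-commutative ring $K_\bullet(X^{\mathbf{g}})$, so that no Koszul signs appear and $x,y,z$ keep their original order. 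The symmetric computation through the second fibre product yields
\[
x\star(y\star z)=\tilde{\mu}_*\big(\tilde{e}_1^*x\cup\tilde{e}_2^*y\cup\tilde{e}_3^*z\cup\lambda_{-1}(A_2^\vee)\big),\qquad A_2:=E_2+\mathcal{R}(g_2,g_3)|_{X^{\mathbf{g}}}+\mathcal{R}(g_1,g_2g_3)|_{X^{\mathbf{g}}}.
\]

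It therefore suffices to prove the key identity $A_1=A_2$ in $K_0(X^{\mathbf{g}})$, which I would get by showing that both sides equal the symmetric triple obstruction class
\[
\mathcal{R}(g_1,g_2,g_3):=\Im_{g_1}|_{X^{\mathbf{g}}}+\Im_{g_2}|_{X^{\mathbf{g}}}+\Im_{g_3}|_{X^{\mathbf{g}}}+\Im_{(g_1g_2g_3)^{-1}}|_{X^{\mathbf{g}}}-[N_{X^{\mathbf{g}}}X].
\]
For $A_1$: substitute the defining expressions (Definition~\ref{def:obstruction}) of $\mathcal{R}(g_1,g_2)$ and $\mathcal{R}(g_1g_2,g_3)$ restricted to $X^{\mathbf{g}}$; use \eqref{Equation6} in the form $\Im_{g_1g_2}|_{X^{\mathbf{g}}}+\Im_{(g_1g_2)^{-1}}|_{X^{\mathbf{g}}}=[N_{X^{g_1g_2}}X]|_{X^{\mathbf{g}}}$; use the additivity $[N_AC]=[N_AB]+[N_BC]|_A$ of normal-bundle classes along the chains $X^{\mathbf{g}}\subset X^{g_1,g_2}\subset X^{g_1g_2}\subset X$ and $X^{\mathbf{g}}\subset X^{g_1g_2,g_3}\subset X$; and note that, as a $K_0$-class, $E_1=[N_{X^{g_1,g_2}}X^{g_1g_2}]|_{X^{\mathbf{g}}}-[N_{X^{\mathbf{g}}}X^{g_1g_2,g_3}]$. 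Expanding everything out, the contributions of the intermediate normal bundles $N_{X^{g_1,g_2}}X$, $N_{X^{g_1g_2}}X$ and $N_{X^{g_1g_2,g_3}}X$ all cancel, leaving precisely $\mathcal{R}(g_1,g_2,g_3)$; the computation for $A_2$ is word-for-word the same after permuting the three group elements. Hence $A_1=A_2$, and the product \eqref{Stringy product K theory} is associative.

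I expect the key identity $A_1=A_2$ to be the main obstacle --- not for the depth of the algebra, but because it hinges on two points that must be set up carefully. First, the identification of $X^{\mathbf{g}}$ with the scheme-theoretic fibre products above: this is exactly where the hypothesis that the base field is $\mathbb{C}$ (characteristic zero) is used, ensuring smoothness of all fixed loci and the clean-intersection property, and hence that $E_1$, $E_2$ are honest vector bundles of the stated form. Second, the repeated and sign-sensitive use of the projection and excess intersection formulas of \S\ref{subsect:Ktheory}, where one must verify that no Koszul signs or re-ordering issues creep into the comparison of the two bracketings. The parenthetical failure of commutativity is expected and left unproved: interchanging $x$ and $y$ replaces the component $X^{g_1,g_2}$ by $X^{g_2,g_1}$ and transforms the obstruction class by the involution $\sigma$, which is not the identity in general; graded commutativity is recovered only for the modified orbifold product appearing in Theorem~\ref{thm:main1}.
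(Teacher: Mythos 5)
Your argument is correct and follows essentially the same route as the paper: the same unwinding of both bracketings via the projection formula and the excess intersection formula over the triple fixed locus, reducing associativity to the identity $A_1=A_2$, which is precisely \eqref{Equation1} (equivalently \eqref{Equation2} after applying $\lambda_{-1}$). The only difference is that you also sketch a proof of that key identity via \eqref{Equation6} and normal-bundle additivity --- the paper simply cites it from Jarvis--Kaufmann--Kimura \cite[Lemma 5.2]{JKK1} --- and your sketch, including the formula $E_{1,2}=[N_{X^{g_1,g_2}}X^{g_1g_2}]|_{X^{\mathbf{g}}}-[N_{X^{\mathbf{g}}}X^{g_1g_2,g_3}]$ and the cancellation of the intermediate normal bundles, checks out.
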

In order to prove this theorem, we will need the following lemma. See \S \ref{subsect:Ktheory} for the definition of excess normal bundle. 
\begin{lemma} [{\cite[Lemma 5.2]{JKK1}}]
Let $\mathbf{g}: = (g_1, g_2, g_3) \in G^3$. 
Let $E_{1,2}$ be the excess normal bundle of 
\begin{equation} \label{eq:excess1} 
\begin{tikzcd}
X^{\mathbf{g}} \arrow[r] \arrow[d]
& X^{g_1, g_2} \arrow[d] \\
X^{g_1g_2,g_3} \arrow[r]
& X^{g_1g_2}
\end{tikzcd}
\end{equation}
and $E_{2,3}$ the excess normal bundle of 
\begin{equation} \label{eq:excess2} 
\begin{tikzcd}
X^{\mathbf{g}} \arrow[r] \arrow[d]
& X^{g_2, g_3} \arrow[d] \\
X^{g_1,g_2g_3} \arrow[r]
& X^{g_2g_3},
\end{tikzcd}
\end{equation} where all the morphisms are the natural inclusions. Then the following equation holds in $K_0(X^{\mathbf{g}})$
\begin{equation} \label{Equation1}
 \mathcal{R}(g_1, g_2)|_{X^{\mathbf{g}}} + \mathcal{R}(g_1 g_2, g_3))|_{X^{\mathbf{g}}} + [E_{1, 2}] \
= \ \mathcal{R}(g_1, g_2 g_3)|_{X^{\mathbf{g}}} + \mathcal{R}(g_2, g_3)|_{X^{\mathbf{g}}} + [E_{2, 3}].
\end{equation}
More precisely, they are equal to
$
\sum_{i=1}^{3} \Im_{g_i} |_{X^{\mathbf{g}}} + \Im_{(g_1g_2g_3)^{-1}} |_{X^{\mathbf{g}}} - N_{X^{\mathbf{g}}}X.
$
\end{lemma}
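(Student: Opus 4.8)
The plan is to prove the sharper ``more precisely'' assertion, namely that \emph{each} side of \eqref{Equation1} is equal, in $K_0(X^{\mathbf{g}})$, to
$$S \;:=\; \sum_{i=1}^{3}\Im_{g_i}|_{X^{\mathbf{g}}} \;+\; \Im_{(g_1g_2g_3)^{-1}}|_{X^{\mathbf{g}}} \;-\; [N_{X^{\mathbf{g}}}X];$$
the equality \eqref{Equation1} is then an immediate consequence. Since $S$ is symmetric with respect to interchanging the two bracketings $(g_1g_2)g_3$ and $g_1(g_2g_3)$ — it does not privilege $g_1g_2$ over $g_2g_3$ — it suffices to compute the left-hand side; the right-hand side follows verbatim after replacing the flag $X^{\mathbf{g}}\subset X^{g_1,g_2}\subset X^{g_1g_2}$ by $X^{\mathbf{g}}\subset X^{g_2,g_3}\subset X^{g_2g_3}$.

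First I would unwind Definition \ref{def:obstruction}. Using $X^{\mathbf{g}}\subset X^{g_1,g_2}$ and $X^{\mathbf{g}}\subset X^{g_1g_2,g_3}$ to restrict, one gets
$$\mathcal{R}(g_1,g_2)|_{X^{\mathbf{g}}} = \Im_{g_1}|_{X^{\mathbf{g}}} + \Im_{g_2}|_{X^{\mathbf{g}}} + \Im_{(g_1g_2)^{-1}}|_{X^{\mathbf{g}}} - [N_{X^{g_1,g_2}}X]|_{X^{\mathbf{g}}},$$
$$\mathcal{R}(g_1g_2,g_3)|_{X^{\mathbf{g}}} = \Im_{g_1g_2}|_{X^{\mathbf{g}}} + \Im_{g_3}|_{X^{\mathbf{g}}} + \Im_{(g_1g_2g_3)^{-1}}|_{X^{\mathbf{g}}} - [N_{X^{g_1g_2,g_3}}X]|_{X^{\mathbf{g}}}.$$
Adding these and invoking \eqref{Equation6} for the element $g_1g_2$ — restricting the identity valid on $X^{g_1g_2}\supset X^{\mathbf{g}}$, so that $\Im_{g_1g_2}|_{X^{\mathbf{g}}}+\Im_{(g_1g_2)^{-1}}|_{X^{\mathbf{g}}}=[N_{X^{g_1g_2}}X]|_{X^{\mathbf{g}}}$ — collapses the two ``age of $g_1g_2$'' contributions into a normal-bundle class, leaving
$$\mathcal{R}(g_1,g_2)|_{X^{\mathbf{g}}} + \mathcal{R}(g_1g_2,g_3)|_{X^{\mathbf{g}}} = S + [N_{X^{\mathbf{g}}}X] + [N_{X^{g_1g_2}}X]|_{X^{\mathbf{g}}} - [N_{X^{g_1,g_2}}X]|_{X^{\mathbf{g}}} - [N_{X^{g_1g_2,g_3}}X]|_{X^{\mathbf{g}}}.$$
So it remains to prove the purely geometric identity $[E_{1,2}] = [N_{X^{g_1,g_2}}X]|_{X^{\mathbf{g}}} + [N_{X^{g_1g_2,g_3}}X]|_{X^{\mathbf{g}}} - [N_{X^{g_1g_2}}X]|_{X^{\mathbf{g}}} - [N_{X^{\mathbf{g}}}X]$.

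For this I would compute $E_{1,2}$ directly from the definition of the excess normal bundle in \S\ref{subsect:Ktheory}. All the fixed loci are smooth by \cite[Proposition 3.4]{BE1}, so the arrows in \eqref{eq:excess1} are regular embeddings of smooth varieties; using the trivial factorization of the bottom embedding $X^{g_1g_2,g_3}\hookrightarrow X^{g_1g_2}$ one finds $[E_{1,2}] = [N_{X^{g_1g_2,g_3}}X^{g_1g_2}]|_{X^{\mathbf{g}}} - [N_{X^{\mathbf{g}}}X^{g_1,g_2}]$. Now I would apply the short exact sequences of normal bundles for the two flags of smooth closed subvarieties $X^{g_1g_2,g_3}\subset X^{g_1g_2}\subset X$ and $X^{\mathbf{g}}\subset X^{g_1,g_2}\subset X$, namely $[N_{X^{g_1g_2,g_3}}X^{g_1g_2}] = [N_{X^{g_1g_2,g_3}}X] - [N_{X^{g_1g_2}}X]|_{X^{g_1g_2,g_3}}$ and $[N_{X^{\mathbf{g}}}X^{g_1,g_2}] = [N_{X^{\mathbf{g}}}X] - [N_{X^{g_1,g_2}}X]|_{X^{\mathbf{g}}}$; substituting, restricting everything to $X^{\mathbf{g}}$, and cancelling gives exactly the required expression for $[E_{1,2}]$. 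This settles the left-hand side, and hence, by symmetry, the lemma.

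I do not expect a conceptual obstacle here — the content is the bookkeeping of $K_0$-classes. The points to be careful about are: the $\sigma^{*}$-twist that is implicit in the notation $\Im_{(g_1g_2)^{-1}}|_{X^{\mathbf{g}}}$ when one quotes \eqref{Equation6} (on $X^{g}$ the involution acts as the identity on the underlying variety, so this is harmless); the fact that \eqref{eq:excess1} is Cartesian with smooth, reduced corners, which is what makes $E_{1,2}$ an honest vector bundle and the normal-bundle sequences exact; and a consistent choice of which of the two regular embeddings in \eqref{eq:excess1} is being base-changed in the definition of the excess bundle (the two choices give the same class in $K_0$). All of these are routine given \eqref{Equation6}, \cite[Proposition 3.4]{BE1}, and the formalism of \S\ref{subsect:Ktheory}.
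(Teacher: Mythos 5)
Your proof is correct. The paper itself does not prove this lemma --- it is quoted from \cite[Lemma 5.2]{JKK1} --- and your computation (unwinding Definition \ref{def:obstruction}, collapsing $\Im_{g_1g_2}+\Im_{(g_1g_2)^{-1}}$ via \eqref{Equation6}, and identifying $[E_{1,2}]$ with the alternating sum of normal bundles through the two smooth flags) is essentially the bookkeeping argument of \emph{loc.\ cit.} The side remarks you flag (the harmless $\sigma^*$-twist, the smooth reduced Cartesian square, and the independence in $K_0$ of which embedding is base-changed) are exactly the right points to check, and all hold.
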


\begin{proof}[Proof of Theorem \ref{Associativity1}]
We follow the proof of \cite[Lemma 5.4]{JKK1} closely.
The multiplicativity of $\lambda_{-1}$ applies to \eqref{Equation1} yields
\begin{equation} \label{Equation2}
\begin{split}
 & \lambda_{-1} (\mathcal{R}(g_1, g_2)^\vee)|_{X^{\mathbf{g}}} \cup \lambda_{-1}(\mathcal{R}(g_1 g_2, g_3)^\vee))|_{X^{\mathbf{g}}} \cup \lambda_{-1} (E_{1, 2})^\vee \\
= \ & \lambda_{-1}(\mathcal{R}(g_1, g_2 g_3)^\vee)|_{X^{\mathbf{g}}} \cup \lambda_{-1}(\mathcal{R}(g_2, g_3)^\vee)|_{X^{\mathbf{g}}} \cup \lambda_{-1} (E_{2, 3})^\vee.
\end{split}
\end{equation}

Let $g_4: = g_1 g_2 g_3$ and consider the following diagram
\begin{equation}
\xymatrix{
&&X^{\mathbf g} \ar[dl]_{e} \ar[dr]^{f}&&\\
&X^{g_{1}, g_{2}} \ar[dl]_{e_1} \ar[d]^{e_2} \ar[dr]^{\mu}& &X^{g_1g_2, g_{3}}\ar[dl]_{e_1} \ar[d]^{e_2} \ar[dr]^{\mu}&\\
X^{g_{1}}&X^{g_{2}}&X^{g_1g_2}&X^{g_{3}}&X^{g_{4}}
}
\end{equation}
where $e$ is the evaluation map $e^3_3$ \eqref{eq:ev} and $f$ is the face map $f^3_2$ \eqref{eq:face}. The middle rhombus is the diagram \eqref{eq:excess1}. The natural embeddings $j_i: X^{\mathbf{g}} \to X^{g_i}$ factor as
\begin{align*}
& j_1 = e_{1} \circ e \ \ \ \ \ \ \ \ \ \ \ \   j_2 = e_{2} \circ e \\
& j_3 = e_{2} \circ f \ \ \ \ \ \ \ \ \ \ \ \   j_4 = \mu \circ f.
\end{align*}

For any $x \in K_{\bullet}(X^{g_1})$, $y \in K_{\bullet}(X^{g_2})$, $z \in K_{\bullet}(X^{g_3})$, we have
\begin{equation}\label{Equation3}
\begin{split} 
& (x \star y) \star z \\ 
:= & \ \mu_* \big\{ e_{1}^{*}\left[\mu_*  \left( e^{*}_{1} x \cup e^{*}_{2} y \cup \lambda_{-1}(\mathcal{R}(g_1, g_2)^{\vee}) \right)\right] \cup e^{*}_{2}z \cup \lambda_{-1}(\mathcal{R}(g_1g_2, g_3)^{\vee})\big \} \\
= & \ \mu_* \big\{ f_{*} \left[ e^{*} \left( e^{*}_{1} x \cup e^{*}_{2}y \cup \lambda_{-1}(\mathcal{R}(g_1, g_2)\right) \cup \lambda_{-1}(E^\vee_{1, 2})\right]  \cup e^{*}_{2}z \cup \lambda_{-1}(\mathcal{R}(g_1g_2, g_3)^{\vee}) \big\} \\ 
= & \ \mu_* \big\{ f_{*} \left[ e^{*}  e^{*}_{1} x \cup e^{*} e^{*}_{2}y \cup e^{*} \lambda_{-1}(\mathcal{R}(g_1, g_2)^{\vee})  \cup \lambda_{-1}(E^\vee_{1, 2}) \right]   \\ 
  & \ \ \ \ \ \ \ \ \ \ \ \ \ \ \ \ \ \ \ \ \ \ \ \ \ \ \ \ \ \ \ \ \ \ \ \ \ \  \ \ \ \ \ \ \ \ \ \ \ \ \ \ \ \ \ \ \ \ \ \ \ \ \ \ \ \ \ \cup e^{*}_{2}z \cup \lambda_{-1}(\mathcal{R}(g_1g_2, g_3)^\vee) \big\} \\
= & \ \mu_* \big\{ f_{*} \big[ e^{*} e^{*}_{1} x \cup e^{*} e^{*}_{2}y \cup e^{*} \lambda_{-1}(\mathcal{R}(g_1, g_2) \cup \lambda_{-1}(E^\vee_{1, 2})    \\ 
  & \ \ \ \ \ \ \ \ \ \ \ \ \ \ \ \ \ \ \ \ \ \ \ \ \ \ \ \ \ \ \ \ \ \ \ \ \ \ \ \ \ \ \ \ \ \ \ \ \ \ \ \ \ \ \ \ \ \ \ \cup f^{*} e^{*}_{2}z \cup f^{*} \lambda_{-1}(\mathcal{R}(g_1g_2, g_3)^{\vee}) \big] \big\}  \\
= & \ i_{4*} \big(j^{*}_1 x \cup j^{*}_2 y \cup e^{*}(\lambda_{-1} \mathcal{R}(g_1, g_2) \cup \lambda_{-1}(E^{\vee}_{1, 2}) \cup j_3^{*} z \cup 
f^{*}(\lambda_{-1} \mathcal{R}(g_1g_2, g_3)^{\vee})\big) \\ 
= & \ j_{4*} \big(j^{*}_1 x \cup j^{*}_2 y  \cup j_3^{*} z \cup \lambda_{-1} \mathcal{R}(g_1, g_2)^{\vee}|_{X^{\mathbf{g}}}  
\cup \lambda_{-1}  \mathcal{R}(g_1g_2, g_3)^{\vee}|_{X^{\mathbf{g}}} \cup \lambda_{-1}E^{\vee}_{1, 2}\big),
\end{split}
\end{equation}
where the second equility follows from the excess intersection formula (Proposition \ref{Excess intersection formula}), 
the fourth equality follows from the projection formula (Proposition \ref{Projection formula}).

Using a similar argument, we have
\begin{equation} \label{Equation4}
\begin{split}
x \star (y \star z) 
= j_{4*} & (j^{*}_1 x \cup j^{*}_2 y  \cup j_3^{*} z \cup \lambda_{-1} \mathcal{R}(g_1, g_2g_3)^{\vee}|_{X^{\mathbf{g}}}  
\cup \lambda_{-1}  \mathcal{R}(g_2, g_3)^{\vee}|_{X^{\mathbf{g}}} \cup \lambda_{-1}E^{\vee}_{2, 3}).
\end{split}
\end{equation}
By \eqref{Equation2}, the two expressions \eqref{Equation3} and \eqref{Equation4} are equal.
\end{proof}

\subsection{Stringy higher Chow groups}
\begin{definition}
We define the \textit{stringy higher Chow group} $\CH^*(X,G, \bullet)$ of X, as a bigraded vector space, to be the rational higher Chow group of the inertia variety, with codimension degree shifted by the age function (Definition \ref{def:age}), i.e.,
$$
\CH^i(X,G, \bullet): = \CH^{i-\age}(I_G(X), \bullet) = \prod_{g \in G} \CH^{i-\age(g)}(X^g, \bullet).
$$
\end{definition}
Note that there are isomorphisms (see \cite[Theorem 3]{EG1}):
$$
\CH^p_G(X,n) \otimes \mathbb{Q} \cong \CH^p(X/G,n) \otimes \mathbb{Q} \cong \left(\CH^p(X, n) \otimes \mathbb{Q}\right)^G.
$$

\begin{definition}[Stringy product]
Given $\mathbf{g} = (g_1, g_2) \in G^2$. For any $x \in \CH^{i-\age(g_{1})}(X^{g_1}, \bullet)$ and $y \in \CH^{j-\age(g_{2})}(X^{g_2}, \bullet)$, we define the stringy product of $x$ and $y$ in $\CH^{i+j-\age(g_{1}g_{2})}(X^{g_{1}g_{2}}, \bullet)$ to be
\begin{equation} \label{Stringy product motivic cohomology} 
x \star y: = \mu_* (e^{*}_{1} x \cup e^{*}_{2} y \cup c_{top}(\mathcal{R}(\mathbf{g}))).
\end{equation}
We extend linearly this product to the whole $\CH^*(X,G, \bullet)$.
\end{definition}

The multiplicativity of the top Chern character and the equality \eqref{Equation1} give
\begin{equation} \label{Equation5}
\begin{split}
& c_{top} (\mathcal{R}(g_1, g_2))|_{X^{\mathbf{g}}} \cup c_{top}(\mathcal{R}(g_1 g_2, g_3)))|_{X^{\mathbf{g}}} \cup c_{top} (E_{1, 2}) \\
=  & \, c_{top}(\mathcal{R}(g_1, g_2 g_3))|_{X^{\mathbf{g}}} \cup c_{top}(\mathcal{R}(g_2, g_3))|_{X^{\mathbf{g}}} \cup c_{top} (E_{2, 3}).
\end{split}
\end{equation}
A similar argument as in the proof of Theorem \eqref{Associativity1} (\emph{cf.} \cite[Lemma 5.4]{JKK1}) shows that

\begin{theorem} \label{Associativity2}
The stringy product for stringy motivic cohomology \eqref{Stringy product motivic cohomology} is associative.
\end{theorem}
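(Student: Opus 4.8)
The plan is to mimic almost verbatim the proof of Theorem \ref{Associativity1}, replacing the $K$-theoretic Euler class $\lambda_{-1}(-^\vee)$ by the top Chern class $c_{top}(-)$ of the obstruction bundle, and replacing the $K$-theoretic excess intersection formula (Proposition \ref{Excess intersection formula}) by its higher-Chow-group counterpart. First I would set $g_4 := g_1g_2g_3$, $\mathbf{g} := (g_1,g_2,g_3)$, and consider the same diagram of evaluation and face maps
\[
\xymatrix{
&&X^{\mathbf g} \ar[dl]_{e} \ar[dr]^{f}&&\\
&X^{g_{1}, g_{2}} \ar[dl]_{e_1} \ar[d]^{e_2} \ar[dr]^{\mu}& &X^{g_1g_2, g_{3}}\ar[dl]_{e_1} \ar[d]^{e_2} \ar[dr]^{\mu}&\\
X^{g_{1}}&X^{g_{2}}&X^{g_1g_2}&X^{g_{3}}&X^{g_{4}}
}
\]
with the same factorizations $j_1 = e_1\circ e$, $j_2 = e_2\circ e$, $j_3 = e_2\circ f$, $j_4 = \mu\circ f$, and noting that the central rhombus is the Cartesian square \eqref{eq:excess1} with excess normal bundle $E_{1,2}$ (all these are smooth varieties, and $e$, $f$, $\mu$ are projective, being disjoint unions of closed embeddings of smooth projective varieties, so the excess intersection formula for higher Chow groups applies).

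Next I would compute $(x\star y)\star z$ by expanding the definition \eqref{Stringy product motivic cohomology} twice. Pushing $\mu_*$ inside via $e_1^* \circ \mu_* = f_* \circ (c_{top}(E_{1,2})\cup e^*(-))$ — which is exactly the excess intersection formula for higher Chow groups applied to \eqref{eq:excess1} — then moving the remaining factors $e_2^* z$ and $c_{top}(\mathcal{R}(g_1g_2,g_3))$ across $f_*$ using the projection formula \eqref{ProjForm2}, and finally collapsing the compositions $\mu\circ f$, $e_2\circ f$, $e_1\circ e$, $e_2\circ e$ into the $j_i$'s via functoriality of pullback and pushforward, yields
\[
(x\star y)\star z = j_{4*}\bigl(j_1^*x \cup j_2^*y \cup j_3^*z \cup c_{top}(\mathcal{R}(g_1,g_2))|_{X^{\mathbf g}} \cup c_{top}(\mathcal{R}(g_1g_2,g_3))|_{X^{\mathbf g}} \cup c_{top}(E_{1,2})\bigr).
\]
By the symmetric computation using the other associativity diagram and the Cartesian square \eqref{eq:excess2},
\[
x\star(y\star z) = j_{4*}\bigl(j_1^*x \cup j_2^*y \cup j_3^*z \cup c_{top}(\mathcal{R}(g_1,g_2g_3))|_{X^{\mathbf g}} \cup c_{top}(\mathcal{R}(g_2,g_3))|_{X^{\mathbf g}} \cup c_{top}(E_{2,3})\bigr).
\]
The two right-hand sides agree by \eqref{Equation5}, which is in turn the consequence of the key $K$-theoretic identity \eqref{Equation1} of the preceding lemma together with the Whitney-sum multiplicativity of the total Chern class (applied to the top-degree part, using that $\mathcal{R}(\mathbf g)$, $E_{1,2}$, $E_{2,3}$ are represented by honest vector bundles so their top Chern classes behave as expected, and that the identity \eqref{Equation1} is an equality of classes of vector bundles). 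This proves associativity.

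The only genuinely new point compared with the $K$-theory argument is bookkeeping: one must be sure that each formal manipulation (projection formula, excess intersection formula, functoriality of $f^*$ and $f_*$) is available for Bloch's higher Chow groups with the gradings involved, which is guaranteed by the propositions recalled in \S\ref{subsect:Ktheory}--\S\ref{subsect:EquiChow}; and one must check the age-degree shift is respected, i.e. that both sides land in $\CH^{i+j+k-\age(g_1g_2g_3)}(X^{g_4},\bullet)$. The latter follows from the additivity relation $\age(g_1)|_{X^{\mathbf g}} + \age(g_2)|_{X^{\mathbf g}} + \age(g_3)|_{X^{\mathbf g}} + \age((g_1g_2g_3)^{-1})|_{X^{\mathbf g}} = \operatorname{codim}_{X^{\mathbf g}}(X) = \rk N_{X^{\mathbf g}}X$ obtained by taking ranks in the "more precisely" clause of the lemma, exactly as in \cite{JKK1}. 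I expect no real obstacle here; the main (minor) subtlety is simply to confirm that the excess intersection formula for higher Chow groups, as stated in the excerpt, applies to the squares \eqref{eq:excess1} and \eqref{eq:excess2} — which it does, since all four corners are smooth projective and the vertical maps are projective l.c.i. embeddings.
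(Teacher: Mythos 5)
Your proposal is correct and follows essentially the same route as the paper: the authors likewise derive \eqref{Equation5} from \eqref{Equation1} by multiplicativity of the top Chern class and then declare the associativity proof to be the same computation as in Theorem \ref{Associativity1}, with $\lambda_{-1}((-)^{\vee})$ replaced by $c_{top}(-)$ and the $K$-theoretic projection and excess intersection formulas replaced by their higher-Chow-group analogues. Your extra remarks on the age-degree bookkeeping and on the applicability of the excess intersection formula to the squares \eqref{eq:excess1} and \eqref{eq:excess2} are accurate but not part of the paper's (terser) argument.
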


Following \cite[(6.1)]{JKK1} we introduce the following:
\begin{definition}[Stringy Chern character]
The \textit{stringy Chern character} $\mathcal{C}\mathbf{h}: K_{\bullet}(X,G) \to \CH^*(X,G, \bullet)$ to be
\begin{equation} \label{Stringy Chern character}
\mathcal{C}\mathbf{h}(x_g): = \mathbf{ch}(x_g) \cup \mathbf{td}^{-1}(\Im_g)
\end{equation}
for all $g \in G$ and $x_g \in K_{\bullet}(X^g)$, where $\mathbf{td}$ is the usual Todd class and $\mathbf{ch}$ is the higher Chern character map in Theorem \ref{Riemann--Roch1}.
\end{definition}

\begin{theorem} \label{thm:stringChern}
The stringy Chern character is a multiplicative homomorphism, with respect to the stringy products.
\end{theorem}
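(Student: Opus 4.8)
The plan is to show that for $x_{g_1}\in K_\bullet(X^{g_1})$ and $y_{g_2}\in K_\bullet(X^{g_2})$ one has $\mathcal{C}\mathbf{h}(x_{g_1}\star y_{g_2}) = \mathcal{C}\mathbf{h}(x_{g_1})\star \mathcal{C}\mathbf{h}(y_{g_2})$ in $\CH^*(X^{g_1g_2},\bullet)$, by unwinding both sides and comparing. Writing $\mathbf{g}=(g_1,g_2)$ and $j$ for the multiplication map $\mu\colon X^{\mathbf g}\to X^{g_1g_2}$, the left-hand side is $\mathbf{ch}\bigl(\mu_*(e_1^*x_{g_1}\cup e_2^*y_{g_2}\cup\lambda_{-1}(\mathcal R(\mathbf g)^\vee))\bigr)\cup\mathbf{td}^{-1}(\Im_{g_1g_2})$. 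First I would apply the Grothendieck--Riemann--Roch theorem (Theorem \ref{Riemann--Roch2}) to the proper morphism $\mu$ between smooth varieties to push the Chern character through $\mu_*$, at the cost of a factor $\mathbf{td}(N_\mu)$, where $N_\mu$ is the relative tangent class of $\mu$; then use multiplicativity of $\mathbf{ch}$ on the remaining cup product, turning $\mathbf{ch}(\lambda_{-1}(\mathcal R(\mathbf g)^\vee))$ into $c_{top}(\mathcal R(\mathbf g))\cup\mathbf{td}^{-1}(\mathcal R(\mathbf g))$ via the standard identity $\mathbf{ch}(\lambda_{-1}(F^\vee)) = c_{top}(F)\cdot\mathbf{td}(F)^{-1}$ for a vector bundle $F$ (here $\mathcal R(\mathbf g)$ is represented by a genuine bundle by \cite[Theorem 8.3]{JKK1}). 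The right-hand side, by definition of the stringy product on Chow groups \eqref{Stringy product motivic cohomology}, equals $\mu_*\bigl(e_1^*(\mathbf{ch}(x_{g_1})\cup\mathbf{td}^{-1}(\Im_{g_1}))\cup e_2^*(\mathbf{ch}(y_{g_2})\cup\mathbf{td}^{-1}(\Im_{g_2}))\cup c_{top}(\mathcal R(\mathbf g))\bigr)$.

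Next I would bring everything under a single $\mu_*$ using the projection formula (Proposition \ref{ProjForm2}), and compare the total Todd-type correction factors on $X^{\mathbf g}$. On the left one collects $\mu^*\mathbf{td}^{-1}(\Im_{g_1g_2})\cup\mathbf{td}(N_\mu)\cup\mathbf{td}^{-1}(\mathcal R(\mathbf g))$; on the right one collects $e_1^*\mathbf{td}^{-1}(\Im_{g_1})\cup e_2^*\mathbf{td}^{-1}(\Im_{g_2})$. So everything reduces to proving the identity in $K_0(X^{\mathbf g})$ (equivalently, after applying $\mathbf{td}$, in $\CH^*(X^{\mathbf g})$):
\begin{equation*}
e_1^*\Im_{g_1} + e_2^*\Im_{g_2} + (\text{contribution of }\mu) = \mu^*\Im_{g_1g_2} + \mathcal R(\mathbf g),
\end{equation*}
where the contribution of $\mu$ is $-N_\mu = [N_{X^{\mathbf g}}X^{g_1g_2}] - (\text{excess/normal corrections})$. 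Here the crucial computation of $N_\mu$: since $\mu$ is the closed embedding $X^{\mathbf g}\hookrightarrow X^{g_1g_2}$, $N_\mu = -[N_{X^{\mathbf g}}X^{g_1g_2}]$, so $\mathbf{td}(N_\mu) = \mathbf{td}(N_{X^{\mathbf g}}X^{g_1g_2})^{-1}$. Then using $N_{X^{\mathbf g}}X = N_{X^{\mathbf g}}X^{g_1g_2} + (N_{X^{g_1g_2}}X)|_{X^{\mathbf g}}$ and the definition of $\mathcal R(\mathbf g) = e_1^*\Im_{g_1}+e_2^*\Im_{g_2}+(\sigma\mu)^*\Im_{(g_1g_2)^{-1}} - [N_{X^{\mathbf g}}X]$ together with $\Im_{g_1g_2} + \sigma^*\Im_{(g_1g_2)^{-1}} = [N_{X^{g_1g_2}}X]$ from \eqref{Equation6}, the desired $K_0$-identity should fall out after a short bookkeeping of normal bundles.

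Concretely, I would first record the key bundle identity $\mu^*\Im_{g_1g_2} - e_1^*\Im_{g_1} - e_2^*\Im_{g_2} + [N_{X^{\mathbf g}}X^{g_1g_2}] = -\mathcal R(\mathbf g) + (\mu^*\Im_{g_1g_2} + (\sigma\mu)^*\Im_{(g_1g_2)^{-1}} - [N_{X^{g_1g_2}}X]|_{X^{\mathbf g}})$ and observe the parenthesized term vanishes by \eqref{Equation6} pulled back to $X^{\mathbf g}$; applying $\mathbf{td}(-)$ (a homomorphism from $(K_0,+)$ to $(\CH^*,\cup)$) converts this additive identity into the required multiplicative balance of Todd factors, and then the two expressions for $\mathcal{C}\mathbf{h}(x_{g_1}\star y_{g_2})$ and $\mathcal{C}\mathbf{h}(x_{g_1})\star\mathcal{C}\mathbf{h}(y_{g_2})$ literally coincide. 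Finally I would note that $\mathcal{C}\mathbf{h}$ visibly respects the unit $[\mathcal O_{X^1}]\mapsto [X^1]$ (the age of $1_G$ is zero and $\Im_{1_G}=0$), completing the claim that it is a multiplicative homomorphism. The main obstacle I anticipate is the careful sign and normal-bundle bookkeeping in the reduction step — in particular correctly identifying $N_\mu$ for the non-regular-looking "multiplication" map and tracking how the excess normal bundles implicit in the GRR correction interact with the definition of $\mathcal R(\mathbf g)$; the homotopical subtleties of $\mathbf{ch}$ for higher $K$-theory are already subsumed in Theorems \ref{Riemann--Roch1} and \ref{Riemann--Roch2}, so no new input is needed there.
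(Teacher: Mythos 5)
Your proposal is correct and follows essentially the same route as the paper's proof: push $\mathbf{ch}$ through $\mu_*$ via Grothendieck--Riemann--Roch (Theorem \ref{Riemann--Roch2}), convert $\mathbf{ch}(\lambda_{-1}\mathcal{R}^{\vee})$ into $c_{top}(\mathcal{R})\,\mathbf{td}^{-1}(\mathcal{R})$ via \eqref{Todd-Chern}, and reduce everything to the $K_0$-identity \eqref{eq:compare}, which you verify exactly as the paper does using the normal-bundle relations and \eqref{Equation6}. The only blemish is a momentary sign slip where you describe the contribution of $\mu$ as ``$-N_\mu$'' (it should be $+N_\mu=-[N_{X^{\mathbf{g}}}X^{g_1g_2}]$), but your final displayed identity and its verification carry the correct signs.
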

\begin{proof}
The proof is along the same line of \cite[Theorem 6.1]{JKK1}, with techniques on K-theory/Chow-theory replaced by their higher analogues.
Recall that if $E$ is a vector bundle on $X$, then
\begin{equation} \label{Todd-Chern}
\mathbf{td}([E]) \mathbf{ch}(\lambda_{-1}[E^\vee]) = c_{top}([E])
\end{equation}
in $\CH^{*}(X)$.

Let $\mathbf{g}=(g_1, g_2) \in G^2$ and $x_{i} \in K_{\bullet}(X^{g_i})$ for $i= 1, 2$. Set $g = g_1g_2$ and $\mathcal{R} = \mathcal{R}(g_1, g_2)$. We have
\begin{equation*}
\begin{split}
\mathcal{C}\mathbf{h} (x_{1} \star x_{2})
& = \ \mathbf{ch}(x_{1} \star x_{2}) \mathbf{td}^{-1}(\Im_{g}) \\
& = \ \mathbf{ch} \left[\mu_* \left(e^*_{1}x_{1} e^*_{2}x_{2} \lambda_{-1}(\mathcal{R}^\vee)\right) \right] \mathbf{td}^{-1}(\Im_{g})  \\
& = \ \mu_* \left[\mathbf{ch} \left(e^*_{1}x_{1} e^*_{2} x_{2} \lambda_{-1}(\mathcal{R}^\vee) \mathbf{td}(TX^{\mathbf{g}}) \right) \right] \mathbf{td}^{-1}(TX^{g}) \mathbf{td}^{-1}(\Im_{g})  \\
& = \ \mu_* \left[e^*_{1}\mathbf{ch}(x_{1}) e^*_{2}\mathbf{ch}(x_{2}) \mathbf{ch}(\lambda_{-1}(\mathcal{R}^\vee)) \mathbf{td}(TX^{\mathbf{g}})) \right] \mathbf{td}^{-1}(TX^{g}) \mathbf{td}^{-1}(\Im_{g})  \\
& = \ \mu_* \left[e^*_{1}\mathbf{ch}(x_{1}) e^*_{2}\mathbf{ch}(x_{2}) c_{top}(\mathcal{R})\mathbf{td}^{-1}(\mathcal{R}) \mathbf{td}(TX^{\mathbf{g}}] \right] \mathbf{td}^{-1}(TX^{g}) \mathbf{td}^{-1}(\Im_{g})  \\
& = \ \mu_* \left[e^*_{1}\mathbf{ch}(x_{1}) e^*_{2}\mathbf{ch}(x_{2}) c_{top}(\mathcal{R}) \mathbf{td}(TX^{\mathbf{g}} - \mathcal{R}) \right] \mathbf{td}(- TX^{g} - \Im_{g})  \\
& = \ \mu_* \left[e^*_{1}\mathbf{ch}(x_{1}) e^*_{2}\mathbf{ch}(x_{2}) c_{top}(\mathcal{R}) \mathbf{td}(TX^{\mathbf{g}} - \mathcal{R}) \mu^* \mathbf{td}(- TX^{g} - \Im_{g}) \right]  \\
& = \ \mu_* \left[e^*_{1}\mathbf{ch}(x_{1}) e^*_{2}\mathbf{ch}(x_{2}) c_{top}(\mathcal{R}) \mathbf{td}(TX^{\mathbf{g}} - \mathcal{R} - \mu^* TX^{g} - \mu^* \Im_{g}) \right]  \\
\end{split}
\end{equation*}
where the first two equalities follow from definition, the third follows from the Riemann--Roch Theorem \ref{Riemann--Roch2}, the fourth from the fact that the higher Chern character respects pull-backs and multiplications,
the fifth follows from \eqref{Todd-Chern}, the seventh is the projection formula, the sixth and the eighth follow from multiplicativity of $\mathbf{td}$.

We also have
\begin{equation*}
\begin{split}
\mathcal{C}\mathbf{h}(x_{1}) \star \mathcal{C}\mathbf{h}(x_{2})
= & \left(\mathbf{ch}(x_{1}) \mathbf{td}^{-1} (\Im_{g_1}) \right) \star \left(\mathbf{ch}(x_{2}) \mathbf{td}^{-1} (\Im_{g_2})\right) \\
= & \mu_* \left[ e^*_{1} \left(\mathbf{ch}(x_{1}) \mathbf{td}^{-1}(\Im_{g_1})\right) e^*_{g_2} \left(\mathbf{ch}(x_{2}) \mathbf{td}^{-1}(\Im_{g_2})\right) c_{top}(\mathcal{R}) \right] \\
= & \mu_* \left[ e^*_{1}\mathbf{ch}(x_{1}) \mathbf{td}^{-1}(e^*_{1}\Im_{g_1}) e^*_{g_2}\mathbf{ch}(x_{2}) \mathbf{td}^{-1}(e^*_{2}\Im_{g_2}) c_{top}(\mathcal{R}) \right] \\
= & \mu_* \left[ e^*_{1}\mathbf{ch}(x_{1}) e^*_{2}\mathbf{ch}(x_{2}) c_{top}(\mathcal{R}) \mathbf{td}(- e^*_{1}\Im_{g_1} - e^*_{2}\Im_{g_2}) \right]  \\
\end{split}
\end{equation*}
where the first two equalities are definitions, the third holds because pull-backs respect multiplication, the fourth follows from the multiplicativity of $\mathbf{td}$.
Hence, it is sufficient to prove that 
\begin{equation} \label{eq:compare}
TX^{\mathbf{g}} - \mathcal{R} - \mu^* TX^{g} - \mu^* \Im_{g} = - e^*_{1}\Im_{g_1} - e^*_{2}\Im_{g_2}
\end{equation}
in $K_0(X^{g_1,g_2})$. Indeed, 
\begin{equation*}
\begin{split}
TX^{\mathbf{g}} - \mathcal{R} - \mu^* TX^{g} - \mu^* \Im_{g}
= & TX^{\mathbf{g}} - \mathcal{R} - TX^{g}|_{X^{\mathbf{g}}} - \Im_{g}|_{X^{\mathbf{g}}} \\
= & (TX^{\mathbf{g}} + N_{X^{\mathbf{g}}}X) - TX^{g}|_{X^{\mathbf{g}}} - \Im_{g}|_{X^{\mathbf{g}}} -  \mathcal{R} - N_{X^{\mathbf{g}}}X\\
= & TX|_{X^{\mathbf{g}}} - TX^{g}|_{X^{\mathbf{g}}}  - \Im_{g}|_{X^{\mathbf{g}}} - (\mathcal{R} + N_{X^{\mathbf{g}}}X) \\
= & N_{X^{g}}X|_{X^{\mathbf{g}}} - \Im_{g}|_{X^{\mathbf{g}}} -  (\mathcal{R} + N_{X^{\mathbf{g}}}X) \\
= & \sigma^* \Im_{g^{-1}}|_{X^{\mathbf{g}}} - (\Im_{g_1}|_{X^{\mathbf{g}}} + \Im_{g_2}|_{X^{\mathbf{g}}} + \Im_{g^{-1}}|_{X^{\mathbf{g}}}) \\
= & \mu^* \sigma^* \Im_{g^{-1}} - e^*_{g_1}\Im_{g_1} - e^*_{g_2}\Im_{g_2} - (\sigma \circ \mu)^* \Im_{g^{-1}}\\
= & - e^*_{1}\Im_{g_1} - e^*_{2}\Im_{g_2}
\end{split}
\end{equation*}
where the first equality is definition, the third and the fourth are the natural relation of tangent bundle and normal bundle in the Grothendieck group, the fifth follows from \eqref{Equation6}.
\end{proof}

\begin{lemma}
The stringy products on $K_{\bullet}(I_G(X))$, $\CH^*(I_G(X), \bullet)$ are compatible with the $G$-actions. 
\end{lemma}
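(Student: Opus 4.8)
The plan is to reduce everything to the observation, already noted after \eqref{eq:ev}, that all the maps entering the definition of the stringy product are $G$-equivariant. Fix $h\in G$ and write $\psi_h$ for the automorphism $(g,x)\mapsto(hgh^{-1},hx)$ of $I_G(X)$, and (by abuse of notation) also for the automorphism \eqref{Action} of $I^2_G(X)$; concretely $\psi_h$ carries the component $X^{g_1,g_2}$ isomorphically onto $X^{hg_1h^{-1},hg_2h^{-1}}$ by $x\mapsto hx$. Since $\psi_h$ is an isomorphism, the $G$-action on $K_\bullet(I_G(X))$ (resp.\ on $\CH^*(I_G(X),\bullet)$) is given by pull-back along the $\psi_{h^{-1}}$, so it suffices to prove that $\psi_h^*$ is multiplicative for the stringy product for every $h$, i.e.\ $\psi_h^*(x\star y)=\psi_h^*x\star\psi_h^*y$.

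First I would record the relevant equivariances and their consequences: $e_i\circ\psi_h=\psi_h\circ e_i$, $\mu\circ\psi_h=\psi_h\circ\mu$, $\sigma\circ\psi_h=\psi_h\circ\sigma$, and the same with $\psi_h$ replaced by $\psi_h^{-1}$. Because $\psi_h$ and $\psi_h^{-1}$ are isomorphisms, it follows formally from covariance of push-forward (e.g.\ $\psi_h^*\mu_*=(\psi_h^{-1})_*\mu_*=\mu_*(\psi_h^{-1})_*=\mu_*\psi_h^*$) that $\psi_h^*$ commutes with the proper push-forward $\mu_*$ and with the pull-backs $e_i^*$, $(\sigma\circ\mu)^*$; and $\psi_h^*$ always commutes with cup products. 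These are exactly the functorialities recalled in \S\ref{subsect:Ktheory} and \S\ref{subsect:EquiK} and their motivic analogues.

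Next I would check the one substantive point: the logarithmic trace class and the obstruction class are fixed by $\psi_h$, i.e.\ $\psi_h^*\Im=\Im$ in $K_0(I_G(X))$, $\psi_h^*\mathcal{R}=\mathcal{R}$ in $K_0(I^2_G(X))$, and $\age\circ\psi_h=\age$. The reason is that $h\colon X^g\xrightarrow{\ \sim\ }X^{hgh^{-1}}$ identifies the normal bundle $N_{X^g}X$ (with its $g$-action) with $N_{X^{hgh^{-1}}}X$ (with its $hgh^{-1}$-action), since $hgh^{-1}\cdot(hv)=h(g\cdot v)$; hence the eigenvalues $\exp(2\pi i\alpha_k)$ and eigenbundles $W_{g,k}$ of Definition~\ref{Logarithmic1} are carried to those for $hgh^{-1}$, so $\psi_h^*(\Im|_{X^{hgh^{-1}}})=\Im|_{X^g}$ and $\rk$ (hence the age) is preserved; the same reasoning, applied to $X^{\mathbf g}=X^{\langle g_1,g_2\rangle}$, gives $\psi_h^*[N_{X^{\mathbf g}}X]=[N_{X^{\mathbf g}}X]$. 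The identity $\psi_h^*\mathcal{R}=\mathcal{R}$ then drops out of the formula in Definition~\ref{def:obstruction} using $\psi_h^*\Im=\Im$ together with the equivariance of $e_1,e_2,\sigma\circ\mu$.

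Finally I would feed these facts into the definition \eqref{Stringy product K theory}:
\begin{align*}
\psi_h^*(x\star y)&=\psi_h^*\mu_*\bigl(e_1^*x\cup e_2^*y\cup\lambda_{-1}(\mathcal{R}^\vee)\bigr)\\
&=\mu_*\bigl(e_1^*\psi_h^*x\cup e_2^*\psi_h^*y\cup\lambda_{-1}((\psi_h^*\mathcal{R})^\vee)\bigr)\\
&=\psi_h^*x\star\psi_h^*y,
\end{align*}
where the middle equality uses that $\psi_h^*$ commutes with $\mu_*$, $e_i^*$ and $\cup$ together with $e_i\circ\psi_h=\psi_h\circ e_i$, and the last uses $\psi_h^*\mathcal{R}=\mathcal{R}$. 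The $\CH^*(-,\bullet)$ case is verbatim the same with $\lambda_{-1}((-)^\vee)$ replaced by $c_{top}(-)$ in \eqref{Stringy product motivic cohomology}, the age-shifted bigrading being respected because $\age\circ\psi_h=\age$. The hard part is really just the conjugation-invariance of $\Im$ — hence of $\age$ and $\mathcal{R}$ — via the eigenvalue-matching argument above; once that is in hand, compatibility with the $G$-action is pure functoriality.
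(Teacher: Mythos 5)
Your proof is correct and follows essentially the same route as the paper's: both establish the conjugation-invariance $\psi_h^*\mathcal{R}=\mathcal{R}$ (the paper states it componentwise as $\mathcal{R}(g^{-1}g_1g,g^{-1}g_2g)=g^*\mathcal{R}(g_1,g_2)$ and calls it "easy to check", whereas you supply the eigenvalue-matching argument), the commutation of $g^*$ with $e_i^*$ and $\mu_*$ (the paper via base change along a Cartesian square, you via $\psi_h^*=(\psi_h^{-1})_*$), and then conclude by the same three-line computation. The only differences are cosmetic.
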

\begin{proof}
It is easy to check that $\mathcal{R}(g^{-1} g_1 g, g^{-1} g_2 g) = g^*\mathcal{R}(g_1, g_2)$ for any $g, \, g_1,\, g_2 \in G$. Hence 
\begin{equation*}
\lambda_{-1} \left(\mathcal{R}(g^{-1} g_1 g, g^{-1} g_2 g)^\vee \right) = g^*\lambda_{-1} \left(\mathcal{R}(g_1, g_2)^\vee\right).
\end{equation*}
The identity $g \circ e_i = e_i \circ g$ implies that $g^* \circ e_i^* = e_i^* \circ g^*$ for $i=1,\, 2$. The diagram
\[
\begin{tikzcd}
I^2_G(X) \arrow[r, "\mu"] \arrow[d, "g"]
& I_G(X) \arrow[d, "g"] \\
I^2_G(X) \arrow[r, "\mu"]
& I_G(X)
\end{tikzcd}
\]
is Cartesian. It follows that $g^* \mu_* = \mu_* g^*$ on $K$-groups by \cite[Proposition 3.18]{TT1}. So we have
\begin{align*}
(g^*x) \star (g^*y) & = \mu_{*}\left[e^{*}_{1} g^*x \cup e^{*}_{2}g^* y \cup \lambda_{-1}(\mathcal{R}(g^{-1}g_1g, g^{-1}g_2g)^{\vee})\right] \\
& =  \mu_{*}\left[g^*e^{*}_{1} x \cup g^* e^{*}_{2} y \cup g^* \lambda_{-1}(\mathcal{R}(g_1, g_2)^{\vee})\right] \\
& =  \mu_{*} \left[ g^* \left(e^{*}_{1} x \cup e^{*}_{2} y \cup \lambda_{-1}(\mathcal{R}(g_1, g_2)^{\vee}) \right) \right] \\
& =  g^* \left[\mu_{*} \left(e^{*}_{1} x \cup e^{*}_{2} y \cup \lambda_{-1}(\mathcal{R}(g_1, g_2)^{\vee})\right) \right] \\
& =  g^* (x \star y)
\end{align*} 
for any $x,\, y \in K^*(I_G(X))$. The same argument works for $\CH^*(I_G(X), \bullet)$.
\end{proof}
\begin{definition}[Orbifold theories]
Let $X$ and $G$ be as before and  $\mathcal{X}= [X/G]$ be the quotient stack. The \emph{small orbifold $K$-theory} and \emph{small orbifold higher Chow ring} of $\mathcal{X}$ are defined by the $G$-invariant subalgebra of the corresponding stringy theories:
$$
K_{\bullet}^{\orb}(\mathcal{X}): = K_{\bullet}(X, G)^G
$$ 
$$
\CH^*_{\orb}(\mathcal{X}, \bullet): = \CH^*(X, G, \bullet)^G
$$
with the orbifold products are defined as the restriction of the stringy products on the $G$-invariants.
\end{definition}
We will show later (Proposition \ref{prop:independence}) that these definitions are independent of the choice of representations of $\mathcal{X}$.
Note that the stringy higher Chern character induces a ring isomorphism between these two small orbifold theories.

\begin{proposition}
The small orbifold products on orbifold $K$-theory and orbifold higher Chow ring are graded commutative. 
\end{proposition}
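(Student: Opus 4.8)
The plan is to show that the stringy product is commutative up to a twist by a group element, and that this twist disappears on $G$-invariants. For $\gamma\in G$ write $c_\gamma$ for the automorphism of $I_G(X)$ (and of $X$) induced by the action of $\gamma$; it carries the component $X^{a}$ of the (double) inertia variety onto $X^{\gamma a\gamma^{-1}}$, and it restricts to the identity on $X^{\langle g,h\rangle}$ whenever $\gamma\in\langle g,h\rangle$, since such a $\gamma$ fixes $X^{\langle g,h\rangle}$ pointwise.

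First I would prove the \emph{symmetry of the obstruction class}: under the tautological identification $X^{\langle g,h\rangle}=X^{\langle h,g\rangle}$, which exchanges the two evaluation maps $e_1$ and $e_2$, one has $\mathcal R(g,h)=\mathcal R(h,g)$ in $K_0(X^{\langle g,h\rangle})$. By Definition~\ref{def:obstruction} this amounts to the equality $\Im_{(gh)^{-1}}|_{X^{\langle g,h\rangle}}=\Im_{(hg)^{-1}}|_{X^{\langle g,h\rangle}}$. Now the class $\Im$ of Definition~\ref{Logarithmic1} is $G$-equivariant, i.e.\ $(c_\gamma)_*\Im_a=\Im_{\gamma a\gamma^{-1}}$, because conjugation carries the eigenbundle decomposition of $a$ on $N_{X^a}X$ onto that of $\gamma a\gamma^{-1}$; applying this with $\gamma=h$, which conjugates $(gh)^{-1}$ to $(hg)^{-1}$, and using that $c_h$ acts as the identity on $X^{\langle g,h\rangle}$, yields the claim. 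In particular $\age$ (being $\rk\Im$) is conjugation-invariant, so the age-shifted codimension grading in the Chow case is respected and the product will be strictly commutative for that grading.

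Next I would establish a \emph{twisted commutativity} at the stringy level. The two multiplication maps are related by $c_g\circ\mu^{(h,g)}=\mu^{(g,h)}$ as maps $X^{\langle g,h\rangle}\to X^{gh}$, because the action of $g$ carries the $X^{hg}$-component onto the $X^{gh}$-component and is the identity on $X^{\langle g,h\rangle}$. Feeding this, together with the previous step and the graded commutativity of $\cup$ on $K_\bullet$, into the definition~\eqref{Stringy product K theory}, one obtains, for homogeneous $x_g\in K_\bullet(X^g)$ and $y_h\in K_\bullet(X^h)$ of degrees $m$ and $n$,
\[
(c_g)_*\bigl(y_h\star x_g\bigr)=(-1)^{mn}\,x_g\star y_h,\qquad\text{equivalently}\qquad y_h\star x_g=(-1)^{mn}\,(c_{g^{-1}})_*\bigl(x_g\star y_h\bigr),
\]
the factor $\lambda_{-1}(\mathcal R^\vee)$ lying in degree $0$ and hence contributing no sign.

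Finally I would pass to $G$-invariants. Let $\bar x=\sum_g x_g$ and $\bar y=\sum_h y_h$ be $G$-invariant and homogeneous of degrees $m$ and $n$. By the $G$-equivariance of the stringy product established above and the $G$-invariance of $\bar x$ and $\bar y$, $(c_{g^{-1}})_*(x_g\star y_h)=\bigl((c_{g^{-1}})_*x_g\bigr)\star\bigl((c_{g^{-1}})_*y_h\bigr)=x_g\star y_{g^{-1}hg}$. Hence, by the previous step,
\[
\bar y\star\bar x=\sum_{g,h\in G}y_h\star x_g=(-1)^{mn}\sum_{g,h\in G}x_g\star y_{g^{-1}hg}=(-1)^{mn}\sum_{g,h'\in G}x_g\star y_{h'}=(-1)^{mn}\,\bar x\star\bar y,
\]
where the substitution $h'=g^{-1}hg$, a bijection of $G$ for each fixed $g$, gives the third equality. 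This proves the graded commutativity of $K_\bullet^{\orb}(\mathcal X)$; the argument for $\CH^*_{\orb}(\mathcal X,\bullet)$ is word for word the same, with $\lambda_{-1}(\mathcal R^\vee)$ replaced by $c_{top}(\mathcal R)$ and the sign now coming from the simplicial ($\bullet$-)grading. The only genuinely delicate point is the first step: one must keep track of the fact that it is $h$, not $g$, that conjugates $(gh)^{-1}$ to $(hg)^{-1}$, so that it is $c_h$ (which acts trivially on $X^{\langle g,h\rangle}$) that identifies the two $\Im$-classes; everything else is bookkeeping with the projection and excess-intersection formalism already recorded above.
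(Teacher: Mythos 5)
Your proof is correct and follows essentially the same route as the paper's: both reduce graded commutativity on the $G$-invariants to a twisted commutativity relation between individual sectors, resting on the conjugation-invariance of the obstruction class ($\mathcal R(g,h)=\mathcal R(h,g)$ in your formulation, $\mathcal R(g,h)=\mathcal R(h,h^{-1}gh)$ in the paper's — both instances of the same equivariance of $\Im$), and then conclude by reindexing the sum over $G$. If anything, your write-up is more careful than the paper's, which cites Fantechi--G\"ottsche for the obstruction-class symmetry and suppresses the Koszul sign coming from the graded commutativity of $\cup$.
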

\begin{proof}
We only prove the statement for the orbifold $K$-theory, the proof for orbifold higher Chow groups is similar. Just as in \cite[Theorem 1.30]{FG1}, it is enough to show that for any $g, h\in G$, any $x\in K_{\bullet}(X^{g})$ and $y\in K_{\bullet}(X^{h})$, we have the twisted commutativity relation:
$$x \star y=y \star h^{*}(x) \text{  in  } K_{\bullet}(X^{gh}),$$
where $h^{*}(x)\in K_{\bullet}(X^{h^{-1}gh})$ is the image of $x\in K_{\bullet}(X^{g})$ via the natural isomorphism $h.: X^{h^{-1}gh}\xrightarrow{\cong}X^{g}$.

To this end, let $i: X^{\langle g, h\rangle}\hookrightarrow X^{gh}$ be the natural inclusion. The following straightforward computation proves the desired equality: 
\begin{align*}
x \star y =&i_{*}\left(x|_{X^{\langle g, h\rangle}}\cdot y|_{X^{\langle g, h\rangle}}\cdot\lambda_{-1}\mathcal{R}(g, h)^{\vee}\right)\\
=&i_{*}\left(y|_{X^{\langle g, h\rangle}}\cdot h^{*}(x)|_{X^{\langle g, h\rangle}}\cdot\lambda_{-1}\mathcal{R}(h, h^{-1}gh)^{\vee}\right)\\
=&y \star h^{*}(x).
\end{align*}
Here the second equality uses the fact that $\mathcal{R}(g,h)= \mathcal{R}(h, h^{-1}gh)$, see \cite[Lemma 1.10]{FG1}.
\end{proof}


\subsection{Realization functors}
For a compact, almost complex manifold $X$ endowed with an action of a finite group $G$ preserving the almost complex structure, the same construction above can be carried over to define 
the stringy topological $K$-theory  $K^\bullet_{top}(X, G)$, the stringy cohomology $H^*(X,G)$ (with stringy products) and the stringy topological Chern character (see \cite[10.2]{JKK1} and \cite{FG1}). 

Let $X$ be a complex algebraic variety. The set of $\mathbb{C}$-points $X(\mathbb{C})$ inherits the classical (analytic) topology.  
The assignment $X \mapsto X(\mathbb{C})$ defines a functor from the category of (projective) complex varieties to the category of (compact) topological spaces, which sends smooth complex varieties to complex manifolds.  
We will write $K^\bullet_{top}(X)$ and $H^*(X)$ for the topological $K$-theory and singular cohomology of $X (\mathbb{C})$, respectively.

The assignment $E \mapsto E(\mathbb{C})$ induces an exact functor from the category of algebraic vector bundles on $X$ to the category of complex topological vector bundles on $X(\mathbb{C})$.
This induces a natural transformation
$$
F^0(-): K_0(-) \to K^{0}_{top}(-)
$$
between contravariant functors with values in commutative rings which preserves multiplication. This is also a natural transformation of covariant functors on the category of 
smooth projective complex varieties (Baum--Fulton--MacPherson's Riemann--Roch \cite{MR0549773}).
More generally,  for any $n$, there is a natural transformation
$$
F^n(-): K_n(-) \to K^{-n}_{top}(-)
$$
with the same functorial properties \cite{FW1}.

Similarly, the assigment $Z \to Z(\mathbb{C})$ and Poincar\'e duality define the cycle class map $\CH^* (X) \to H^{2*}(X)$. It is generalized to define a natural transformation
$$
F': \CH^{*} (-, \bullet) \to H^{2* - \bullet}(-) 
$$
which forms the commutative diagram of natural transformations
\[
\begin{tikzcd} \label{Commutativity}
K_{\bullet}(-) \arrow[r, "F"] \arrow[d,"\mathbf{ch}"]
& K^{\bullet}_{top}(-) \arrow[d, "ch"] \\
\CH^{*}(-, \bullet) \otimes \mathbb{Q} \arrow[r,  "F'"] & H_{top}^{2*-\bullet}(-) \otimes \mathbb{Q},
\end{tikzcd}
\]
where $ch$ is the topological Chern character. The vertical arrows become isomorphisms if we use rational coefficients. 
Moreover, this diagram is compatible with Riemann--Roch transformations on both algebraic and topological sides \cite[Theorem 5.2]{FW1}.

If $G$ is a finite group acting on $X$, it acts on $X(\mathbb{C})$ and preserves the (almost) complex structure. Moreover $X^g(\mathbb{C}) = X(\mathbb{C})^g$ for any $g \in G$. 
Combining all of these compatible properties, and the fact that the obstruction class $\mathcal{R}(\mathbf{g})$ is represented by the obstruction bundle of Fantechi-Göttsche \cite{FG1}, we obtain ring homomorphisms 
$$
F: K_{\bullet}(X,G) \to K^{\bullet}_{top} (X, G)
$$
and
$$
F': \CH^*(X,G, \bullet) \to H^*(X,G)
$$
between stringy theories. These homomorphisms obviously commute with the stringy Chern characters. 

However, for each smooth complex projective variety $X$, the composition
$$
ch \circ F^n=  F'^{n}\circ \mathbf{ch}: K_n(X) \to H^{*}(X) \otimes \mathbb{Q} 
$$
is known to be zero when $n \ge 1$ (see for example \cite{Gillet2}). 
Therefore, all the algebraic elements in the stringy topological $K$-theory $K^*_{top} (X, G)$ (resp. in the stringy cohomology $H^*(X,G)$) only come from the orbifold Grothendieck ring $K_0(X,G)$ (resp. the orbifold Chow ring $\CH^*(X,G)$). 
In other words, these topological invariants do not give much information about our motivic theories.

One way to obtain the information of (rational) higher algebraic $K$-theory and motivic cohomology by means of topological and geometric data is to use the so-called Beilinson (higher) regulator map whose target is the Deligne cohomology.
For more details about the regulator map and its relation with the Beilinson's conjectures on the values of $L$-functions, we refer the reader to \cite{MR0760999} and \cite{MR0860404}.

Let $X$ be a complex projective variety. \textit{The Deligne complex} $\mathbb{Q}_{\mathcal{D}}(p)$ is the complex
$$
\mathbb{Q}_{\mathcal{D}}(p): = (2\pi i)^p \mathbb{Q} \to \mathcal{O}_X \to \Omega_{X}^1 \to \ldots \to \Omega_{X}^{p-1}
$$
of analytic sheaves on the analytic manifold $X(\mathbb{C})$.
\textit{The Deligne cohomology} $H^q_{\mathcal{D}}(X, \mathbb{Q}(p))$ is defined to be the hypercohomology of $\mathbb{Q}_{\mathcal{D}}(p)$, i.e.,
$$
H^q_{\mathcal{D}}(X, \mathbb{Q}(p)): = \mathbb{H}^q(X(\mathbb{C}), \mathbb{Q}_{\mathcal{D}}(p)).
$$

The total Deligne cohomology $H^{\bullet}_{\mathcal{D}}(X, \mathbb{Q}(*)): = \bigoplus_{q,p} H^{q}_{\mathcal{D}}(X, \mathbb{Q}(p))$ forms a ring with the cup product satisfying the graded commutativity,
i.e., $x \cup y = (-1)^{qq'} y \cup x$ if $x \in H^q_{\mathcal{D}}(X, \mathbb{Q}(p))$ and $y \in H^{q'}_{\mathcal{D}}(X, \mathbb{Q}(p'))$.
It is covariantly functorial with respect to proper morphisms and contravariantly functorial with respect to arbitrary morphisms in $\mathbf{Sm}_{\mathbb{C}}$. 

We have the Beilinson (higher) regulator map \cite{MR0760999}:
$$
\rho: K_{n}(X) \to \bigoplus_{p \ge 0} H^{2p-n}_{\mathcal{D}}(X, \mathbb{Q}(p)),
$$
which is known to be the composition of the the higher Chern character map $\mathbf{ch}: K_{n}(X)\to \CH^{*}(X, n)$ constructed by Gillet \cite{Gillet1} and the the higher cycle class map
$$
\tau: \CH^p(X, n)\otimes \mathbb{Q} \to H^{2p-n}_{\mathcal{D}}(X, \mathbb{Q}(p))
$$
constructed by Bloch in \cite{MR0860404} (see \cite{MR2218900} for a refinement). In other words, the following diagram
\[
\begin{tikzcd}
K_n(X) \arrow[r, "\mathbf{ch}"] \arrow[dr, "\rho"]
& \bigoplus_{p} \CH^p(X, n) \otimes \mathbb{Q} \arrow[d, "\tau"]\\
& \bigoplus_p H^{2p-n}_{\mathcal{D}}(X, \mathbb{Q}(p)).
\end{tikzcd}
\]
commutes. 

The higher cycle class map $\tau$ is co- and contravariantly functorial and commutes with cup product.
Similar to the higher Chern character $\mathbf{ch}$, the Beilinson regulator $\rho$ is contravariantly functorial and commutes with cup product, but not covariantly functorial. 
The lack of commutativity of $\rho$ with taking push-forward is corrected by the Todd class of the tangent bundle. 
If $f: X \to Y$ is a proper morphism between smooth projective varieties, then the following digram commutes
\[
\begin{tikzcd}
K_{\bullet}(Y) \otimes \mathbb{Q} \arrow[r, "f_{*}"] \arrow[d,"\mathbf{td}(TY)\rho"]
& K_{\bullet}(X) \otimes \mathbb{Q} \arrow[d, "\mathbf{td}(TX) \rho"] \\
H^{2*-\bullet}_{\mathcal{D}}(Y, \mathbb{Q}(*)) \arrow[r,  "f_{*}"] & H^{2*-\bullet}_{\mathcal{D}}(X, \mathbb{Q}(*))
\end{tikzcd}
\]

For any vector bundle $V$ on $X$, the top Chern class $c^{\mathcal{D}}_{top}(V)$ of $V$ is defined to be the element $\tau (c_{top}V)$ in $H^{2*}_{\mathcal{D}}(X, \mathbb{Q}(*))$.  
It is well-known that the Deligne cohomology satisfies projection formula and excess intersection formula.

Let $G$ be a finite group acting on $X$, the construction in the previous subsections is applied to define the following
\begin{definition}
The \emph{stringy Deligne cohomology} is
$$
H^{\bullet}_{\mathcal{D}} (X, G, \mathbb{Q}(*)): = H^{\bullet}_{\mathcal{D}}(I_G(X), \mathbb{Q}(*)).
$$
The \emph{stringy product} on the stringy Deligne cohomology is
$$
\alpha \star_{\mathcal{D}} \beta: = \mu_*(e_1^*\alpha \cup e_2^*\beta \cup c^{\mathcal{D}}_{top}(\mathcal{R})).
$$
\end{definition}

The calculation in Section \ref{sect:FiniteGroupQuotient} carries over to show that this stringy product is associative, compatible with the $G$-action, and when restricting to the $G$-invariant part, it is graded commutative. 
Moreover, by mimicking the proof of Theorem \ref{thm:stringChern}, we obtain
\begin{theorem}
\begin{itemize}
\item The higher cycle class map 
$$
\tau: \CH^{*}(X,G, \bullet) \to H^{2*-\bullet}_{\mathcal{D}}(X,G, \mathbb{Q}(*))
$$
is a ring homomorphism with respect to the stringy product.
\item 
Define the stringy regulator 
$$
\mathfrak{p}: K_{\bullet}(X,G) \to H^{2*-\bullet}_{\mathcal{D}}(X,G, \mathbb{Q}(*))
$$
by the formula
$$
\mathfrak{p}(x_g): = \rho(x_g) \cup \mathbf{td}^{-1}(\Im_g)
$$
for any $x_g \in K_{\bullet}(X^g) \otimes \mathbb{Q}$. Then $\mathfrak{p}$ is a ring homomorphisms with respect to the stringy products.
\end{itemize}
\end{theorem}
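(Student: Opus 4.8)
The plan is to transport the whole argument through the higher cycle class map $\tau$, using only the formal properties of Deligne cohomology already recorded: it is a graded commutative ring, contravariantly functorial, covariantly functorial along proper morphisms of smooth projective varieties, and it satisfies the projection and excess intersection formulas; the map $\tau$ commutes with pull-backs, with proper push-forwards and with cup products, and $c^{\mathcal{D}}_{top}(V)=\tau(c_{top}(V))$ by definition; the regulator $\rho$ commutes with pull-backs and cup products and obeys the Grothendieck--Riemann--Roch compatibility with proper push-forward via the Todd class stated just before the theorem. The purely $K$-theoretic identities \eqref{Equation6} and \eqref{eq:compare}, being statements in $K_0$, need no change. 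With this in hand the first assertion is immediate: applying $\tau$ to the definition \eqref{Stringy product motivic cohomology} of the stringy product on $\CH^*(X,G,\bullet)$ and using that $\tau$ is compatible with $\mu_*$, $e_1^*$, $e_2^*$ and with cup products, together with $c^{\mathcal{D}}_{top}(\mathcal{R})=\tau(c_{top}(\mathcal{R}))$ (legitimate since $\mathcal{R}(\mathbf{g})$ is represented by a vector bundle), yields $\tau(x\star y)=\mu_*(e_1^*\tau(x)\cup e_2^*\tau(y)\cup c^{\mathcal{D}}_{top}(\mathcal{R}))=\tau(x)\star_{\mathcal{D}}\tau(y)$, and $\tau$ sends the unit class on $X^{1_G}$ to the unit of the stringy Deligne cohomology ring.

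For the second assertion I would first observe that the stringy regulator factors as $\mathfrak{p}=\tau\circ\mathcal{C}\mathbf{h}$. Indeed, interpreting the Todd factor in the definition of $\mathfrak{p}$ as the image under $\tau$ of the Chow Todd class (the only sensible reading, since it is cup-multiplied against $\rho(x_g)\in H^{*}_{\mathcal{D}}$), and using $\rho=\tau\circ\mathbf{ch}$ and the multiplicativity of $\tau$, one gets $\mathfrak{p}(x_g)=\rho(x_g)\cup\mathbf{td}^{-1}(\Im_g)=\tau(\mathbf{ch}(x_g))\cup\tau(\mathbf{td}^{-1}(\Im_g))=\tau(\mathbf{ch}(x_g)\cup\mathbf{td}^{-1}(\Im_g))=\tau(\mathcal{C}\mathbf{h}(x_g))$ for every $g\in G$ and $x_g\in K_\bullet(X^g)$. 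Multiplicativity of $\mathfrak{p}$ then follows by composing Theorem \ref{thm:stringChern} (multiplicativity of $\mathcal{C}\mathbf{h}$) with the first assertion: $\mathfrak{p}(x\star y)=\tau(\mathcal{C}\mathbf{h}(x\star y))=\tau(\mathcal{C}\mathbf{h}(x)\star\mathcal{C}\mathbf{h}(y))=\tau(\mathcal{C}\mathbf{h}(x))\star_{\mathcal{D}}\tau(\mathcal{C}\mathbf{h}(y))=\mathfrak{p}(x)\star_{\mathcal{D}}\mathfrak{p}(y)$. Alternatively, and closer to the wording of the statement, one repeats verbatim the chain of equalities in the proof of Theorem \ref{thm:stringChern} with $(\mathbf{ch},\CH^*(-,\bullet),\mathcal{C}\mathbf{h})$ replaced by $(\rho,H^{2*-\bullet}_{\mathcal{D}}(-,\mathbb{Q}(*)),\mathfrak{p})$: the Riemann--Roch step becomes the GRR compatibility of $\rho$, the step invoking \eqref{Todd-Chern} becomes its image under $\tau$, namely $\mathbf{td}([E])\cup\rho(\lambda_{-1}[E^\vee])=c^{\mathcal{D}}_{top}([E])$, and the remaining steps use only the projection formula, multiplicativity of $\mathbf{td}$, pull-back multiplicativity of $\rho$, and the $K$-theoretic identity \eqref{eq:compare} — all valid for Deligne cohomology.

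The one point that is not purely formal, and which I would isolate and settle first, is the identification $\rho=\tau\circ\mathbf{ch}$ with $\mathbf{ch}$ \emph{the same} higher Chern character appearing in Theorem \ref{Riemann--Roch2} and hence in the definition of $\mathcal{C}\mathbf{h}$; this is the standard compatibility between Gillet's and Bloch's constructions of the higher Chern character, already implicit in the excerpt. Granting it, there is no genuine obstacle: the whole argument is a transport of structure along $\tau$, and the only thing to watch is the bookkeeping of bidegrees, so that $\tau$ and $\mathfrak{p}$ are declared homomorphisms of the correctly graded rings — the codimension shift by $\age$ on $\CH^*(X,G,\bullet)$ matching the shift of the weight $\mathbb{Q}(*)$ (and of the cohomological degree $2*-\bullet$) on the stringy Deligne cohomology side. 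A secondary, routine verification is that the stringy Deligne cohomology ring is itself well-defined (associative, $G$-equivariant, graded commutative), which the excerpt already asserts carries over from Section \ref{sect:FiniteGroupQuotient}.
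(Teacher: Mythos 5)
Your argument is correct, and your primary route for the second bullet is genuinely different from the paper's. The paper's entire proof is the sentence preceding the statement: one is to \emph{mimic the proof of Theorem \ref{thm:stringChern}}, i.e.\ re-run the Grothendieck--Riemann--Roch chain of equalities inside Deligne cohomology with $\rho$ in place of $\mathbf{ch}$, using the Riemann--Roch square for $\rho$, the Deligne-cohomological analogue of \eqref{Todd-Chern}, and the projection formula --- exactly your ``alternative'' reading. Your main argument instead transports everything along the cycle class map: the first bullet is immediate from the compatibility of $\tau$ with $e_i^*$, $\mu_*$ and $\cup$ together with $c^{\mathcal{D}}_{top}=\tau\circ c_{top}$, and the second bullet then follows purely formally from the factorization $\mathfrak{p}=\tau\circ\mathcal{C}\mathbf{h}$ (legitimate, since the paper records $\rho=\tau\circ\mathbf{ch}$ explicitly and the Todd factor in $\mathfrak{p}$ must indeed be read as its image under $\tau$ for the cup product to make sense) combined with the already established multiplicativity of $\mathcal{C}\mathbf{h}$. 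This is more economical: it never invokes the push-forward Riemann--Roch square for $\rho$ nor the projection or excess-intersection formulas in Deligne cohomology, only the functoriality and multiplicativity of $\tau$; the paper's route re-verifies those inputs on the Deligne side. Both are sound. Your two caveats --- that the age-shift in the grading of $\CH^*(X,G,\bullet)$ is not mirrored in the definition of the stringy Deligne cohomology, so the ``graded'' claim needs the same shift on the target, and that the well-definedness of $\star_{\mathcal{D}}$ is asserted rather than proved --- are fair observations about looseness already present in the paper, not gaps in your argument.
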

\begin{remark}
We can replace the Deligne cohomology in the above discussion by the absolute Hodge cohomology. The results hold without any change.
\end{remark}

\section{Orbifold theories: general setting}\label{sect:General}
In this section, we will generalize the orbifold theories constructed in the previous section where a finite group action is considered, to the case of a proper action by a linear algebraic group.
Namely, we assume that $G$ is a complex algebraic group acting on a complex algebraic variety $X$. Equivariant algebraic $K$-theory and equivariant higher Chow groups are considered with rational coefficients. 
Our approach is the one in \cite{EJK1} using twisted pull-backs.


\subsection{Set-up and decomposition into sectors}

For any natural number $n$, the $n$-th inertia variety $I^{n}_{G}(X)$ is defined in \S\ref{subsect:HigherInertia}. Let $\mathcal{X}$ be the quotient Deligne--Mumford stack $[X/G]$, then its \emph{inertia stack} $\mathcal{I}_{\mathcal{X}}:=\mathcal{X}\times_{\mathcal{X}\times \mathcal{X}}\mathcal{X}$ is canonically identified with the following quotient stack 
$$
\mathcal{I}_{\mathcal{X}}= [I_G(X)/G].
$$

Similarly, the \emph{$n$-th inertia stack} of $\mathcal{X}$, which is by definition $\mathcal{I}^{n}_{\mathcal{X}}:=\underbrace{\mathcal{I}_{\mathcal{X}}\times_{\mathcal{X}}\cdots\times_{\mathcal{X}}\mathcal{I}_{\mathcal{X}}}_{n}=\underbrace{\mathcal{X}\times_{\mathcal{X}\times \mathcal{X}}\cdots\times_{\mathcal{X}\times \mathcal{X}}\mathcal{X}}_{n}$, is identified with the quotient stack 
$$
\mathcal{I}^n_{\mathcal{X}} = [I^n_G(X)/G].
$$
Note that for any $n$, the inertia stack $\mathcal{I}^{n}_{\mathcal{X}}$ is independent of the choice of the presentation of $\mathcal{X}$.

\begin{definition}
A diagonal conjugacy class is an equivalence class in $G^{n}$ for the action of $G$ given by $h.(g_{1}, \ldots, g_{n}):=(hg_{1}h^{-1}, \ldots, hg_{n}h^{-1})$. 
We will denote the diagonal conjugacy class of $(g_1, \ldots, g_n)$ by $\{(g_1, \ldots, g_n) \}$.
For any diagonal conjugacy class $\Psi$ in $G^n$, we set
$$
I(\Psi): = \{(g_1, \ldots, g_n ,x) ~\mid ~g_1x = \ldots = g_nx = x \text{ and } (g_1, \ldots, g_n) \in \Psi \} \subset I^n_G(X).
$$
\end{definition}

If $G$ acts quasi-freely, then $I(\Psi) = \emptyset$ unless $\Psi$ consists of elements of finite order. Since we are working over an algebraically closed field of characteristic zero, we have
\begin{proposition}[{\cite[Proposition 2.17 and Lemma 2.27]{EJK1}}] \label{prop:l.c.i}
If $G$ acts quasi-freely on $X$, then $I^n_{G}(X)$ is the disjoin union of finitely many $I(\Psi)$'s. Moreover, each $I(\Psi)$ is smooth if $X$ is smooth.
In particular, the projection $\pi: I(\Psi)\to X$ is a finite l.c.i morphism.
\end{proposition}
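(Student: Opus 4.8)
The statement to be proved, Proposition \ref{prop:l.c.i}, has three assertions: (a) $I^n_G(X)$ decomposes as a disjoint union of finitely many locally closed pieces $I(\Psi)$ indexed by diagonal conjugacy classes $\Psi$ of elements of finite order; (b) each $I(\Psi)$ is smooth when $X$ is smooth; (c) the projection $\pi\colon I(\Psi)\to X$ is a finite l.c.i.\ morphism. Since this is quoted verbatim from \cite[Proposition 2.17 and Lemma 2.27]{EJK1}, the plan is to assemble the argument from the quasi-freeness hypothesis together with standard facts on fixed loci of reductive/algebraic groups in characteristic zero.

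First I would address (a). Because $G$ acts quasi-freely, $\pi\colon I^n_G(X)\to X$ is quasi-finite, so every point $(g_1,\dots,g_n,x)$ has all $g_i$ lying in the (finite) stabilizer group $G_x$; in particular each $g_i$ has finite order. The set of orders that can occur is bounded (e.g.\ by Minkowski-type bounds on torsion in the relevant linear groups, or simply because the stabilizers are finite and, by properness/quasi-freeness together with Noetherianity, there are only finitely many isomorphism classes of stabilizer groups appearing), hence only finitely many diagonal conjugacy classes $\Psi$ of finite-order tuples meet $I^n_G(X)$. For fixed $\Psi$, the subset $I(\Psi)$ is the union of the $G$-translates of $\{(g_1,\dots,g_n)\}\times X^{\langle g_1,\dots,g_n\rangle}$ over representatives, and these translates are open and closed in $I^n_G(X)$ because two tuples in $G^n$ that are not diagonally conjugate cannot be connected (the ``$G^n$-coordinate'' is locally constant). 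This gives the disjoint-union decomposition.

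Next, (b): fixing a representative $(g_1,\dots,g_n)$, the component of $I(\Psi)$ through it is (up to the $G$-action) the fixed locus $X^{H}$ for the finite subgroup $H=\langle g_1,\dots,g_n\rangle$. Over a field of characteristic zero, the fixed locus of a finite (indeed linearly reductive) group acting on a smooth variety is smooth --- this is exactly the input already invoked in \S\ref{subsect:HigherInertia} via \cite[Proposition 3.4]{BE1} in the finite-group case, and it applies here since $H$ is finite. So each $I(\Psi)$ is smooth.

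Finally, (c): $\pi\colon I(\Psi)\to X$ restricted to the component through $(g_1,\dots,g_n)$ is, after translating, the closed immersion $X^{H}\hookrightarrow X$, which is a regular embedding (a closed immersion of smooth varieties is automatically a regular embedding, hence l.c.i.); the disjoint union over the finitely many $G$-translates remains l.c.i., and it is finite because it is quasi-finite (quasi-freeness) and proper (each $X^H\hookrightarrow X$ is a closed immersion, and there are finitely many pieces). I expect the main obstacle to be the finiteness claim in (a) --- ensuring that only finitely many diagonal conjugacy classes contribute --- which requires knowing that the finite stabilizers appearing have bounded order (or finitely many conjugacy types); this is where the hypotheses that $X$ is quasi-projective, $G$ acts quasi-freely, and we work with Noetherian schemes over a characteristic-zero field are essential, and it is the one point where I would lean most directly on the detailed analysis in \cite{EJK1}.
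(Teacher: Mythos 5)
The paper supplies no argument for this proposition---it is imported verbatim from \cite[Proposition 2.17 and Lemma 2.27]{EJK1}---so the relevant question is whether your reconstruction would actually work. It does work when $G$ is finite, but the proposition sits in \S\ref{sect:General}, where $G$ is an arbitrary linear algebraic group, and several of your steps silently assume that conjugacy classes of $G$ are finite sets. Concretely: (i) the claim that the ``$G^n$-coordinate is locally constant'' on $I^n_G(X)$ is false for connected positive-dimensional $G$: a connected component of $I(\Psi)$ surjects onto the whole diagonal conjugacy class $\Psi$, which is typically positive-dimensional (take $G=SL_2$ acting on $G/H$ with $H\cong\mathbb{Z}/3$ non-central; the $g$-coordinate sweeps out a $2$-dimensional class). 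The correct reason $I(\Psi)$ is closed is that elements of finite order are semisimple and (diagonal) conjugacy classes of semisimple tuples are closed in $G^n$; openness then follows only after one knows that finitely many $\Psi$ occur, so these two points cannot be decoupled the way you decouple them. (ii) $I(\Psi)$ is not a finite disjoint union of translates $\{\mathbf{g}\}\times X^{\mathbf{g}}$; it is the quotient $G\times_{Z_G(\mathbf{g})}X^{\mathbf{g}}$, fibered over $\Psi\cong G/Z_G(\mathbf{g})$ with fiber $X^{\mathbf{g}}$. It is this torsor description (the same Morita-type identification the paper uses in Proposition \ref{Morita}) that upgrades smoothness of $X^{\mathbf{g}}$ --- which is indeed the fixed locus of a finite group in characteristic zero, since $\langle\mathbf{g}\rangle$ lies in a finite stabilizer wherever $X^{\mathbf{g}}\neq\emptyset$ --- to smoothness of $I(\Psi)$; ``up to the $G$-action'' does not substitute for it.

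(iii) Your finiteness argument for $\pi\colon I(\Psi)\to X$ (``quasi-finite plus proper because there are finitely many closed pieces'') collapses in the positive-dimensional case: there are infinitely many translates, and closedness of $I(\Psi)$ in $G^n\times X$ gives nothing over $X$ because $G^n$ is not proper. Properness of $I(\Psi)\to X$ is in fact the genuinely delicate point (roughly, that a finite-order element of a stabilizer cannot degenerate out of $G$ under specialization), and it is precisely what \cite[Lemma 2.27]{EJK1} establishes; this, rather than the finiteness of the set of classes $\Psi$ that you flag as the main obstacle, is where the citation is doing irreplaceable work. By contrast, the l.c.i.\ assertion is cheaper than you make it: once $I(\Psi)$ and $X$ are known to be smooth, any morphism between smooth varieties over a field is l.c.i.\ (factor through the graph embedding followed by the smooth projection), so no identification of $\pi$ with a closed immersion is needed.
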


In particular, the inertia variety $I_G(X)$ contains $X = X^{\{1\}}$ as a connected component. This is called the \textit{non-twisted sector}. The other components $I_G(\Psi)$ are called the \textit{twisted sectors}. 

\begin{remark}
All the structure maps considered in the case of finite groups (Section \ref{subsect:HigherInertia}) fit well into this generalization. 
For any $\mathbf{g} \in G^n$ with the diagonal conjugacy class $\Psi$, we replace each fixed locus $X^{\mathbf{g}}$ by the component $I(\Psi)$.  
The only difference is that the face and the evaluation maps are no longer inclusions but finite l.c.i morphisms by the above Proposition.
\end{remark}

For any $g \in G$, denote $Z_G(g)$ the centralizer of $g$ in $G$. For $\mathbf{g}: = (g_1, \ldots, g_n) \in G^n$, let $Z_G(\mathbf{g}): = \bigcap_{i=1}^n Z_G(g_i)$ and $X^{\mathbf{g}} = \bigcap_{i=1}^n X^{g_i}$ with the reduced scheme structure.  
\begin{proposition} \label{Morita}
We have the decompositions
$$
\CH^{*}(\mathcal{I}^n_{\mathcal{X}}, \bullet) = \CH^{*}_G(I^n_G(X), \bullet) \cong \bigoplus_{\Psi}\CH^*_{Z_G(\mathbf{g})}(X^{\mathbf{g}}, \bullet)
$$
$$
K_{\bullet}(\mathcal{I}^n_{\mathcal{X}}) = K^G_\bullet(I^n_G(X)) \cong \bigoplus_{\Psi}K^{Z_G(\mathbf{g})}_{\bullet}(X^{\mathbf{g}})
$$
where $\Psi$ runs over all diagonal conjugacy classes of $G^n$ such that $I(\Psi) \neq \emptyset$ and $\mathbf{g}$ is a representative for each $\Psi$.
\end{proposition}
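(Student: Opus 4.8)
The plan is to reduce the claimed decomposition to a purely group-theoretic observation about how $G$ acts on the components of $I^n_G(X)$, combined with the standard Morita-type identification of equivariant invariants of an induced space. First I would note that $I^n_G(X) = \coprod_{\Psi} I(\Psi)$ is a decomposition into $G$-stable open-and-closed subschemes, where $\Psi$ runs over the diagonal conjugacy classes with $I(\Psi)\neq\emptyset$; this is immediate from the definition of $I(\Psi)$ together with Proposition \ref{prop:l.c.i} (which guarantees finiteness of the index set and smoothness of each piece). Since equivariant higher Chow groups and equivariant $K$-theory send finite disjoint unions to direct sums (both being represented, via the Edidin--Graham Borel construction, by the ordinary theories of $\coprod (I(\Psi)\times U)/G$, and $K$-theory of a disjoint union splitting likewise), we get
$$
\CH^*_G(I^n_G(X),\bullet) \cong \bigoplus_\Psi \CH^*_G(I(\Psi),\bullet), \qquad K^G_\bullet(I^n_G(X)) \cong \bigoplus_\Psi K^{G}_\bullet(I(\Psi)).
$$

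The heart of the argument is then to identify, for a fixed $\Psi$ with representative $\mathbf{g}=(g_1,\dots,g_n)$, the terms $\CH^*_G(I(\Psi),\bullet)$ with $\CH^*_{Z_G(\mathbf{g})}(X^{\mathbf{g}},\bullet)$ and similarly for $K$-theory. The key structural fact is that the map $Z_G(\mathbf{g})\times X^{\mathbf{g}} \to I(\Psi)$, $(h,x)\mapsto h\cdot(\mathbf{g},x) = (\mathbf{g},hx)$ (abusing notation, after recording the conjugacy), realizes $I(\Psi)$ as the ``induced'' $G$-space $G\times_{Z_G(\mathbf{g})} X^{\mathbf{g}}$: concretely, $I(\Psi)$ is the $G$-orbit of the closed subscheme $\{\mathbf{g}\}\times X^{\mathbf{g}}$, whose stabilizer (as a subscheme, for the conjugation-and-translation action) is exactly $Z_G(\mathbf{g})$, so that $I(\Psi)\cong G\times_{Z_G(\mathbf{g})}X^{\mathbf{g}}$ as $G$-varieties. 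Granting this, I would invoke the standard Morita equivalence (cf.\ \cite{EJK1}, or Thomason for $K$-theory and Edidin--Graham for Chow): for a closed subgroup $H\subset G$ and an $H$-variety $Y$, the category of $G$-equivariant sheaves on $G\times_H Y$ is equivalent to the category of $H$-equivariant sheaves on $Y$, inducing
$$
\CH^*_G(G\times_H Y,\bullet)\cong \CH^*_H(Y,\bullet),\qquad K^G_\bullet(G\times_H Y)\cong K^H_\bullet(Y).
$$
Applying this with $H=Z_G(\mathbf{g})$ and $Y=X^{\mathbf{g}}$ gives the desired identification of each summand.

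Assembling the two steps yields the stated decompositions. I expect the main obstacle to be the careful bookkeeping in the middle step: verifying that the scheme-theoretic stabilizer of the component $\{\mathbf{g}\}\times X^{\mathbf{g}}$ inside $I^n_G(X)$ is precisely $Z_G(\mathbf{g})$ — one must be attentive to the fact that $X^{\mathbf{g}}$ is taken with its reduced structure, that two tuples in $G^n$ lying in the same component of $I^n_G(X)$ need not be conjugate on the nose unless they share a fixed point (handled by the very definition of $I(\Psi)$), and that the induced-space presentation $I(\Psi)\cong G\times_{Z_G(\mathbf{g})}X^{\mathbf{g}}$ is an isomorphism of schemes and not merely a bijection on points. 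Once this geometric identification is in place, the passage to $\CH^*_G$ and $K^G_\bullet$ is formal, using that the Morita equivalence is compatible with the Borel-construction definition of equivariant higher Chow groups (which is where one needs the quotients to be quasi-projective, guaranteed here by linearizability of the $G$-action). The case $n=0$ and the identification of the non-twisted sector are then just the instance $\Psi=\{(1,\dots,1)\}$, where $Z_G(\mathbf{g})=G$ and $X^{\mathbf{g}}=X$.
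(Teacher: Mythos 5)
Your proposal is correct and follows essentially the same route as the paper: the paper also first splits off the $\Psi$-summands from the disjoint-union decomposition $I^n_G(X)=\coprod_\Psi I(\Psi)$ and then identifies $K^G_\bullet(I(\Psi))$ with $K^{Z_G(\mathbf{g})}_\bullet(X^{\mathbf{g}})$ (and likewise for higher Chow groups via the Borel construction). The only difference is presentational — where you invoke the Morita equivalence for the induced space $G\times_{Z_G(\mathbf{g})}X^{\mathbf{g}}\cong I(\Psi)$ as a black box, the paper proves it on the spot by equipping $G\times X^{\mathbf{g}}$ with a $G\times Z_G(\mathbf{g})$-action and using its two torsor projections onto $I(\Psi)$ and $X^{\mathbf{g}}$, which is exactly the standard proof of the equivalence you cite.
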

\begin{proof}
We prove the proposition only in the case of $n =1$ to simplify the notation. The proof for an arbitrary $n$ is similar.

Consider firstly the case of higher $K$-theory.
Since $I_G(X) = \coprod_{\Psi}I(\Psi)$ and $G$ acts on each $I(\Psi)$ under this decomposition, we have 
$$
K_{\bullet}^{G}(I_G(X)) = \bigoplus_{\Psi} K_{\bullet}^{G}(I(\Psi)).
$$
So, we only need to prove that
\begin{equation} \label{eq:Morita}
K_{\bullet}^{G}(I(\Psi)) = K_{\bullet}^{Z_G(g)}(X^g)
\end{equation}
for any $g \in \Psi$.

Define the action of $G \times Z_G(g)$ on $G \times X^g$ by the formula 
$$
(h,z).(k,x) \colon = (hkz^{-1}, zx)
$$
and consider $G \times X^h$ as a $G$- and $Z_G(g)$-scheme by identifying the groups $G$ and $Z_G(g)$ with $G \times 1$ and $1 \times Z_G(g) \subset G \times Z_G(g)$, respectively.

The map $G \times X^g \to I(\Psi), (g,x) \mapsto g.x$ is obivously a $G$-equivariant map and a $Z_G(g)$-torsor. 
The category of $G$-vector bundles on $I(\Psi)$ is hence equivalent to the category of $G \times Z_G(g)$-vector bundles on $G \times X^g$ \cite[Proposition 6.2]{Thomason1}. Therefore
\begin{equation} \label{Morita1}
K_{\bullet}^{G}(I(\Psi)) = K_{\bullet}^{G \times Z_G(g)}(G \times X^g).
\end{equation}

Similarly, the projection $G \times X^g \to X^g$ is a  $Z_G(g)$-equivariant map and is a $G$-torsor. 
The category of $Z_G(g)$-vector bundles on $X^g$ is equivalent to the category of $G \times Z_G(g)$-vector bundles on $G \times X^g$ and we have
\begin{equation} \label{Morita2}
K_{\bullet}^{Z_G(g)}(X^g) = K_{\bullet}^{G \times Z_G(g)}(G \times X^g).
\end{equation}
The statement for equivariant $K$-theory is followed from \eqref{Morita1} and \eqref{Morita2}.

Now we consider the case of higher Chow groups. 
For each index $i \in \mathbb{N}$, let $V$ be a representation of $G \times Z_G(g)$ and $U \subset V$ such that $G \times Z_G(g)$ acts freely on $U$ and $V-U \subset V$ has codimension at least $i+1$. 
Consider $V$ and $U$ as $G$- and $Z_G(g)$-sets in the obvious way. We have
\begin{align*}
\CH^i_{Z_G(g)}(X^g, n): & = \CH^i\left((X^g \times U)/Z_G(g), n\right) \\
& = \CH^i((G \times X^g \times U)/(G\times Z_G(g)), n) \\
& = \CH^i((I(\Psi) \times U)/G, n) \\
& = \colon CH^i_{G}(I(\Psi), n).
\end{align*}
The second identity follows from the fact that the projection $G \times X^g \to X^g$ is a $G$-torsor, hence
$$
(G \times X^g \times U) /(G \times Z_G(g)) = [(G \times X^g \times U) /G] /Z_G(g) = (X^g \times U)/Z_G(g).
$$ 
The third identity follows similarly from the fact that $G \times X^g \to I(\Psi)$ is a $Z_G(g)$-torsor.
\end{proof}

\subsection{Twisted pullback and orbifold products}
We first recall the key constructions in \cite{EJK1}.
Assume that $Z$ is an algebraic group acting on $X$. Let $V$ be a $Z$-vector bundle on $X$. 
Let $g$ be an element of finite order acting on the fibers of $V$ such that this action commutes with $Z$-action.  
We define the \textit{logarithmic trace} $L(g)(V)$ by
$$
L(g)(V) = \sum_{k=1}^r \alpha_k V_k \in K_0^{Z}(X)
$$
where $0 \le \alpha_1, \ldots, \alpha_r < 1$ are rational numbers such that $\operatorname{exp}(2 \pi i \alpha_k)$ are the eigenvalues of $g$ and $V_k$ are the corresponding eigenbundles 
(compare to Definition \ref{Logarithmic1}).

\begin{definition}[Twisted pull-backs, \emph{cf.}~\cite{EJK1}]\label{def:TwPullBack}
Keep the same notations as in Proposition \ref{Morita}. For any $n \ge 1$, the \emph{twisted pullback} map is defined to be
$$
f^{tw}: K_0^G(X) \to K^G_0(I^n_G(X)) = \bigoplus_{\Psi} K^{Z_G(\mathbf{g})}_0(X^{\mathbf{g}})
$$
whose $\Psi$-summand is given by
\begin{align*}
f^{tw}_{\Psi} \colon K^G_0(X) & \to K^{Z_G(\mathbf{g})}_0(X^{\mathbf{g}}) \\
V & \mapsto \sum_{i=1}^n L(g_i)(V|_{X^{\mathbf{g}}}) + L(g_1\ldots g_n)^{-1}(V|_{X^{\mathbf{g}}}) + V^{\mathbf{g}} - V|_{X^{\mathbf{g}}},
\end{align*}
where $\mathbf{g}\in \Psi$ is any representative.
\end{definition}
For any any conjugacy class $\Psi$ in $G$, we use the notation $L(\Psi)$ to denote the composition of $L(g)$ with the isomorphism $K^{Z_G(g)}_{\bullet}(X^{g}) \cong K^G_{\bullet}(I(\Psi))$, where $g$ is any element in $\Psi$.
The maps $L(\Psi)$ and $f^{tw}_{\Psi}$ are independent of the choice of representatives.

Let $\mathbb{T}_X:= T_X - \mathfrak{g}$ where $\mathfrak{g}$ is the Lie algebra of $G$. If it will not cause confusion, we will ignore to subscript $X$ to simply write $\mathbb{T}$ for $\mathbb{T}_{X}$. 
Since $\mathbb{T} = \pi^*(T_{\mathcal{X}})$ where $\pi: X \to \mathcal{X}=[X/G]$ is the universal $G$-torsor \cite[Lemma 6.6]{EJK1}, 
$\mathbb{T}$ is a positive element in $K_{0}^{G}(X)$ and its image in $K_0(\mathcal{X})$ is independent of the presentation of $\mathcal{X}$ as a quotient stack.
Since the twisted pullback map $f^{tw}$ takes non-negative elements to non-negative elements \cite[Proposition 4.6]{EJK1}, $f^{tw}(\mathbb{T})$ is a non-negative element in $K^G_0(I^2_G(X))$. 

\begin{definition}
The element $f^{tw}\mathbb{T}$ is called the obstruction bundle and is denoted by $\mathbb{T}^{tw}$. 
For each diagonal conjugacy class $\Psi$ in $G^2$, the restriction of $\mathbb{T}^{tw}$ to $K_0^G(I(\Psi))$ is denoted by $\mathbb{T}^{tw}(\Psi)$. 
\end{definition}

We can now mimic \cite{EJK1} to define the orbifold product for K-theory and higher Chow groups. 
\begin{definition}\label{def:OrbProdGeneral}
Let $e_{1}, e_{2}, \mu: I^{2}_{G}(X)\to I_{G}(X)$ be the three natural morphisms in \S \ref{subsect:HigherInertia}.
\begin{enumerate}[$(i)$]
\item The orbifold product $\star_{c_{\mathbb T}}$ on $\CH^{*}_{G}(I_{G}(X), \bullet)$ is defined by
 $$
 \alpha \star_{c_{\mathbb T}} \beta := \mu_* \left(e_1^*  \alpha \cup e_2^* \beta \cup c_{top}(\mathbb{T}^{tw}) \right).
 $$
 The \emph{orbifold higher Chow ring} of $[X/G]$, denoted by $\CH^{*}_{\orb}([X/G], \bullet)$, is defined to be $\CH^{*}_{G}(I_{G}(X), \bullet)$ equipped with the orbifold product $\star_{c_{\mathbb{T}}}$.
\item The orbifold product $\star_{\mathcal{E}_{\mathbb{T}}}$ on $K_{\bullet}^{G}(I_{G}(X))$ is defined by
 $$
 \alpha \star_{\mathcal{E}_{\mathbb T}} \beta := \mu_* \left(e_1^*  \alpha \cup e_2^* \beta \cup \lambda_{-1}(\mathbb{T}^{tw})^{\vee} \right).
 $$
 The \emph{orbifold K-theory} of $[X/G]$, denoted by $K^{\orb}_{\bullet}([X/G])$, is defined to be $K_{\bullet}^{G}(I_{G}(X))$ equipped with the orbifold product $\star_{\mathcal{E}_{\mathbb T}}$.
\end{enumerate}
\end{definition}
\begin{remark}
On equivariant $K$-theory, push-forwards are $R(G)$-module homomorphisms and pull-backs are $R(G)$-algebra homomorphisms (Section \ref{subsect:EquiK}). 
This implies that the orbifold product $\star_{\mathcal{E}_{\mathbb T}}$ on $K_{\bullet}^{G}(I_{G}(X))$ commutes with the natural action of $R(G)$. 
In particular, $\star_{\mathcal{E}_{\mathbb T}}$ induces a product on the completion $K_{\bullet}^{G}(I_G(X))^{\wedge}$, which is also called orbifold product and is denoted by the same symbol $\star_{\mathcal{E}_{\mathbb{T}}}$.
\end{remark}

\begin{proposition} \label{prop:independence}
The orbifold products $\star_{c_{\mathbb{T}}}$ and $\star_{\mathcal{E}_{\mathbb{T}}}$ are independent of the choice of presentation of $\mathcal{X} = [X/G]$ as a quotient stack.
\end{proposition}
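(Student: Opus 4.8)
The plan is to reduce the statement to a known invariance property of the input data appearing in Definition \ref{def:OrbProdGeneral}, and then to invoke the functoriality of the operations $e_1^*, e_2^*, \mu_*$ and of the characteristic classes $c_{top}$ and $\lambda_{-1}$ under change of presentation. Suppose $\mathcal{X} = [X/G] = [X'/G']$ are two presentations of the same Deligne--Mumford stack as a quotient of a smooth projective variety by a linear algebraic group acting with finite stabilizers. Since the $n$-th inertia stack $\mathcal{I}^n_{\mathcal{X}}$ is intrinsic to $\mathcal{X}$ (as noted after the definition of $\mathcal{I}^n_{\mathcal{X}}$), we have canonical identifications $\CH^*_G(I_G(X),\bullet) \cong \CH^*(\mathcal{I}_{\mathcal{X}},\bullet) \cong \CH^*_{G'}(I_{G'}(X'),\bullet)$ and likewise for $K$-theory, compatible with the three structure morphisms $e_1, e_2, \mu \colon \mathcal{I}^2_{\mathcal{X}} \to \mathcal{I}_{\mathcal{X}}$, which are themselves intrinsic. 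So the only thing that could \emph{a priori} depend on the presentation is the obstruction class $\mathbb{T}^{tw} = f^{tw}(\mathbb{T}_X) \in K_0^G(I^2_G(X))$, together with its image $c_{top}(\mathbb{T}^{tw})$ in $\CH^*_G(I^2_G(X))$.

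The key point is therefore: \textbf{the class $\mathbb{T}^{tw} \in K_0(\mathcal{I}^2_{\mathcal{X}})$ is independent of the presentation.} First, $\mathbb{T}_X = T_X - \mathfrak{g} = \pi^*(T_{\mathcal{X}})$ where $\pi\colon X \to \mathcal{X}$ is the universal $G$-torsor (cited from \cite[Lemma 6.6]{EJK1}); hence under the identification $K_0^G(X) = K_0(\mathcal{X})$ the element $\mathbb{T}_X$ corresponds to the intrinsic class $T_{\mathcal{X}}$, so it is presentation-independent. Second, I would argue that the twisted pullback $f^{tw}$ descends to an intrinsically defined operation $K_0(\mathcal{X}) \to K_0(\mathcal{I}^2_{\mathcal{X}})$. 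This is the substance of the argument: one must check that the local-complete-intersection structure on $\pi\colon I(\Psi)\to X$ (Proposition \ref{prop:l.c.i}), the evaluation/face morphisms, and the logarithmic trace operation $L(g)(-)$ are all preserved by the equivalence of stacks. The logarithmic trace only depends on the eigenvalue decomposition of the canonically acting element on the restriction of an equivariant bundle to a fixed-point component --- these are the gerbe-theoretic automorphisms carried by points of the inertia stack, which are intrinsic --- so $L(g)(-)$ has a presentation-free description in terms of the band of $\mathcal{I}_{\mathcal{X}}$. Granting this, the defining formula for $f^{tw}_\Psi$ (a $\mathbb{Z}$-linear combination of $L$-terms and of $V^{\mathbf g} - V|_{X^{\mathbf g}}$, the latter being $[\text{kernel of }\pi^*]-[\text{something}]$, i.e.\ built from the l.c.i.\ data of $\pi$) assembles into an intrinsic operation; applied to the intrinsic input $T_{\mathcal{X}}$ it yields an intrinsic $\mathbb{T}^{tw}$.

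Once $\mathbb{T}^{tw}$ is known to be intrinsic, the rest is immediate: $c_{top}$ and $\lambda_{-1}((-)^{\vee})$ are functorial for the base-change equivalence, so $c_{top}(\mathbb{T}^{tw})$ and $\lambda_{-1}((\mathbb{T}^{tw})^\vee)$ correspond under the identifications of the two presentations; and since $e_1^*, e_2^*, \cup, \mu_*$ likewise correspond, the two formulas $\mu_*(e_1^*\alpha \cup e_2^*\beta \cup c_{top}(\mathbb{T}^{tw}))$ and $\mu_*(e_1^*\alpha \cup e_2^*\beta \cup \lambda_{-1}((\mathbb{T}^{tw})^\vee))$ define the same product under the identification. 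For the $K$-theoretic completion one further notes that the identification $K_\bullet^G(I_G(X)) \cong K_\bullet^{G'}(I_{G'}(X'))$ is compatible with the respective $R(G)$- and $R(G')$-module structures in a way that matches the augmentation-ideal filtrations (both compute $K_\bullet(\mathcal{I}_{\mathcal{X}})^{\wedge}$), so $\star_{\mathcal{E}_{\mathbb{T}}}$ on the completion is also presentation-independent.

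\emph{Main obstacle.} The delicate step is verifying that the twisted pullback $f^{tw}$, and in particular the logarithmic-trace ingredient $L(g)(-)$ together with the l.c.i.\ structure on the evaluation maps, genuinely descends to a stack-intrinsic operation --- i.e.\ that the combinatorial data $(g_1,\dots,g_n)$ up to diagonal conjugacy, the centralizers, the fixed loci and their normal bundles with the induced finite-order automorphisms, all translate faithfully across an arbitrary equivalence $[X/G]\simeq[X'/G']$. One clean way to organize this is to observe, following \cite{EJK1}, that these constructions can be phrased entirely in terms of the inertia stack and its self-fiber-products over $\mathcal{X}$ (the maps $e_i$, $\mu$, $\sigma$) and the tautological automorphism line/band on $\mathcal{I}_{\mathcal{X}}$, none of which references a presentation; alternatively one checks directly that Morita equivalence (Proposition \ref{Morita}) intertwines the two presentations' formulas. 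I expect the cleanest writeup to simply cite the presentation-independence of $T_{\mathcal{X}}$ from \cite[Lemma 6.6]{EJK1}, note that $f^{tw}$ is built from intrinsic inertia-stack data exactly as in \cite{EJK1}, and conclude. When $G$ is finite this recovers the independence already implicit in \cite{JKK1}, and the argument above for $\bullet = 0$ specializes to Theorem \ref{thm:EJKK}(ii),(iv).
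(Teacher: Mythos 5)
Your proposal is correct and follows essentially the same route as the paper: reduce everything to the presentation-independence of the obstruction class $\mathbb{T}^{tw}\in K_0(\mathcal{I}^2_{\mathcal{X}})$ (all other ingredients --- the inertia stacks, the maps $e_1,e_2,\mu$, and the classes $c_{top}$, $\lambda_{-1}$ --- being intrinsic), and then invoke Edidin--Jarvis--Kimura for that key fact. The paper's proof is exactly this reduction followed by a citation of the double filtration argument of \cite[Theorem 6.3]{EJK1}, which is the precise reference for the ``delicate step'' you flag.
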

\begin{proof}
We only need to show that the obstruction bundle $\mathbb{T}^{tw}$ in $K_0^G(I^2_G(X)) = K_0(\mathcal{I}^2_{\mathcal{X}})$ is independent of the choice of presentation of $\mathcal{X}$.
This is done by  using again the double filtration argument \cite[Theorem 6.3]{EJK1}.
\end{proof}
\begin{remark}
In the case of a finite group, $\mathbb{T}^{tw}$ is nothing but the obstruction bundle $\mathcal{R}$ in Definition \ref{def:obstruction}. Therefore, under the ring isomorphism $CH^*_G(I_G(X), \bullet) \cong CH^*(I_G(X), \bullet)^G$
(with respect to the usual product), the orbifold product $\star_{c_{\mathbb{T}}}$ is just the orbifold product $\star$ defined in the Section \ref{sect:FiniteGroupQuotient}. 
This implies that the small orbifold product on $\CH^*_{\orb}(\mathcal{X}, \bullet)$ is also independent of the choice of presentation of $\mathcal{X}$ as a quotient stack. 
Since the stringy higher Chern character induces a ring isomorphism $K^{\orb}_{\bullet} (\mathcal{X}) \cong \CH^*_{\orb}(\mathcal{X}, \bullet)$ with respect to orbifold products, we obtain the desired result for 
the small orbifold $K$-theory $K^{\orb}_{\bullet} (\mathcal{X})$.
\end{remark}

\begin{theorem} \label{Associativity3}
The orbifold products $\star_{c_{\mathbb{T}}}$ on $\CH^*_{G}(I_G(X), \bullet)$ and $\star_{\mathcal{E}_{\mathbb T}}$ on $K_{\bullet}^{G}(I_G(X))$ are associative.
\end{theorem}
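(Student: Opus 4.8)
The plan is to deduce Theorem~\ref{Associativity3} from the finite-group case (Theorems~\ref{Associativity1} and~\ref{Associativity2}) by repeating the diagram chase \eqref{Equation3}, with closed immersions systematically replaced by finite l.c.i.\ morphisms. By Proposition~\ref{Morita}, both $K_\bullet^G(I_G(X))$ and $\CH^*_G(I_G(X),\bullet)$ decompose as direct sums over diagonal conjugacy classes, and the maps $e_1,e_2,\mu$ together with the structure maps of $I^3_G(X)$ restrict, sector by sector, to finite l.c.i.\ morphisms between the smooth varieties $I(\Psi)$ of Proposition~\ref{prop:l.c.i}. Since pull-backs and push-forwards respect this decomposition and are $R(G)$-linear, it suffices to fix a diagonal conjugacy class $\Psi=\{(g_1,g_2,g_3)\}$ in $G^3$, with representative $\mathbf{g}=(g_1,g_2,g_3)$, and to check that the two bracketings of the triple orbifold product agree after restriction to the component $I(\Psi)$ of $I^3_G(X)$.

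First I would establish the analogue of the cocycle identity \eqref{Equation1} for the obstruction class $\mathbb{T}^{tw}$. Let $E_{1,2}$ and $E_{2,3}$ be the excess normal bundles of the two Cartesian squares of finite l.c.i.\ morphisms obtained from \eqref{eq:excess1} and \eqref{eq:excess2} after replacing each fixed locus by the corresponding component of $I^2_G(X)$. Then the additivity of the logarithmic trace $L(-)$, the definition of the twisted pull-back (Definition~\ref{def:TwPullBack}) applied to $\mathbb{T}=T_X-\mathfrak{g}$, and the identification of the resulting difference with $E_{1,2}$ and $E_{2,3}$ should give, in $K_0^{Z_G(\mathbf{g})}(I(\Psi))$,
\begin{align*}
& \mathbb{T}^{tw}(g_1,g_2)|_{I(\Psi)} + \mathbb{T}^{tw}(g_1g_2,g_3)|_{I(\Psi)} + [E_{1,2}] \\
={} & \mathbb{T}^{tw}(g_1,g_2g_3)|_{I(\Psi)} + \mathbb{T}^{tw}(g_2,g_3)|_{I(\Psi)} + [E_{2,3}],
\end{align*}
each side being represented by a single $G$-vector bundle; this is the content of the relevant lemma of \cite{EJK1}. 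Applying the multiplicativity of $\lambda_{-1}((-)^\vee)$ (respectively of $c_{top}$, legitimate here because every summand is an honest bundle) then converts this into the analogue of \eqref{Equation2} (respectively of \eqref{Equation5}).

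Next I would carry out the computation \eqref{Equation3} verbatim in the present setting. Factoring the natural maps $I(\Psi)\to I(\{g_i\})$ through $I^2_G(X)$, one uses the equivariant excess intersection formula for finite l.c.i.\ morphisms of schemes with the resolution property (Proposition~\ref{Excess intersection formula} together with its equivariant refinement recalled in \S\ref{subsect:EquiK}; for higher Chow groups, the excess intersection formula recalled in \S\ref{subsect:EquiChow}) to move the inner $\mu_*$ past a pull-back, and then the projection formula (Proposition~\ref{Projection formula}, respectively \eqref{ProjForm2}) to gather all the pull-backs. This rewrites $(x\star_{\mathcal{E}_{\mathbb T}} y)\star_{\mathcal{E}_{\mathbb T}} z$ as a single push-forward along $I(\Psi)\to I(\{g_1g_2g_3\})$ of the product of $j_1^*x$, $j_2^*y$, $j_3^*z$ with $\lambda_{-1}(\mathbb{T}^{tw}(g_1,g_2)^\vee)|_{I(\Psi)}$, $\lambda_{-1}(\mathbb{T}^{tw}(g_1g_2,g_3)^\vee)|_{I(\Psi)}$ and $\lambda_{-1}(E_{1,2}^\vee)$, exactly as in \eqref{Equation3}; a symmetric computation gives the analogue of \eqref{Equation4} for $x\star_{\mathcal{E}_{\mathbb T}}(y\star_{\mathcal{E}_{\mathbb T}} z)$, and the two coincide by the identity of the previous paragraph. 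The argument for $\star_{c_{\mathbb T}}$ on $\CH^*_G(I_G(X),\bullet)$ is identical, with $\lambda_{-1}((-)^\vee)$ replaced by $c_{top}$ and \eqref{Equation2} by \eqref{Equation5}.

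The hard part will be the second paragraph together with the bookkeeping of l.c.i.\ Cartesian squares needed in the third: one must verify that the squares built from the triple inertia $I^3_G(X)$ are genuinely Cartesian with precisely the stated excess normal bundles (so that the excess intersection formula applies), and that on each side the difference between the two groupings of summands really is the class of $E_{1,2}$, respectively $E_{2,3}$, and not merely an equal class in $K_0$. These are exactly the verifications already made in \cite{EJK1} at the level of Chow groups and $K_0$; the only genuinely new ingredients here are the higher excess intersection and projection formulas recalled in \S\S\ref{subsect:Ktheory}--\ref{subsect:EquiChow}, and once these formal properties are available the associativity follows from the same diagram chase with no further geometric input.
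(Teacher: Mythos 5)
Your proposal is correct and follows essentially the same route as the paper's proof: both reduce to the identity $e_{1,2}^*\mathbb{T}^{tw}(\Psi_{1,2})+\mu_{12,3}^*\mathbb{T}^{tw}(\Psi_{12,3})+E_{1,2}=e_{2,3}^*\mathbb{T}^{tw}(\Psi_{2,3})+\mu_{1,23}^*\mathbb{T}^{tw}(\Psi_{1,23})+E_{2,3}$ imported from \cite{EJK1}, and then rerun the diagram chase of Theorem \ref{Associativity1} sector by sector, with the equivariant projection formula and the equivariant excess intersection formula for finite l.c.i.\ morphisms replacing their closed-immersion versions.
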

\begin{proof}

Given $g_1, g_2, g_3 \in G$, let $g_4: = g_1 g_2 g_3$. Let $\Psi_{1,2,3}: = \{(g_1, g_2, g_3)\}$, $\Psi_{12,3}: =\{(g_1g_2, g_3)\}$, $\Psi_{1,23}: = \{(g_1, g_2 g_3)\}$, and so on, be the diagonal conjugacy classes.
Let $e_{1,2} : = e^3_3$, $e_{2,3}: = e^3_1$, $\mu_{12,3}: = f^3_2$ and $\mu_{1,23}: = f^3_3$ be the structure maps defined in the section \ref{subsect:HigherInertia}.
The diagrams
\begin{equation} \label{eq:l.c.i.1}
\begin{tikzcd}
I(\Psi_{1,2,3}) \arrow[r, "e_{1,2}"] \arrow[d, "\mu_{12,3}"]
& I(\Psi_{1,2}) \arrow[d, "\mu"] \\
I(\Psi_{12,3}) \arrow[r,  "e_1"]
& I(\Psi_{12})
\end{tikzcd}
\end{equation}
and
\begin{equation} \label{eq:l.c.i.2}
\begin{tikzcd}
I(\Psi_{1,2,3}) \arrow[r, "e_{2,3}"] \arrow[d, "\mu_{1,23}"]
& I(\Psi_{2,3}) \arrow[d, "\mu"] \\
I(\Psi_{1,23}) \arrow[r, "e_2"]
& I(\Psi_{23})
\end{tikzcd}
\end{equation}
are Cartesian, and the vertical arrows are finite l.c.i morphisms.
Let $E_{1,2}$ and $E_{2,3}$ the excess intersection bundle of \eqref{eq:l.c.i.1} and \eqref{eq:l.c.i.2}, respectively, then
\begin{equation} \label{eq:bundle}
e_{1,2}^* \mathbb{T}^{tw}(\Psi_{1,2}) + \mu_{12,3}^* \mathbb{T}^{tw}(\Psi_{12,3}) + E_{1,2}  
=  e_{2,3}^*\mathbb{T}^{tw}(\Psi_{2,3}) + \mu_{1,23}^* \mathbb{T}^{tw}(\Psi_{1,23}) + E_{2,3} 
\end{equation}
\cite[(26)]{EJK1}.

Consider the diagram
\begin{equation*}
\xymatrix{
&&I(\Psi_{1,2,3}) \ar[dl]_{e_{1,2}} \ar[dr]^{\mu_{12,3}}&&\\
& I(\Psi_{1,2}) \ar[dl]_{e_1} \ar[d]^{e_2} \ar[dr]^{\mu}& &I(\Psi_{12,3})\ar[dl]_{e_1} \ar[d]^{e_2} \ar[dr]^{\mu}&\\
I(\Psi_1)&I(\Psi_2)&I(\Psi_{12})&I(\Psi_3)&I(\Psi_{4})
}
\end{equation*}
where the middle rhombus is the diagram \eqref{eq:l.c.i.1}. Denote $j_i: I(\Psi_{1,2,3}) \to I(\Psi_i)$ for the obvious morphisms with $i= 1, \ldots, 4$.

Let $x \in K^G_*(I(\Psi_1))$, $y \in K^G_*(I(\Psi_2))$ and $z \in K^G_*(I(\Psi_3))$. We have 
\begin{equation}
\begin{split}
& (x \star_{\mathcal{E}_{\mathbb{T}}} y) \star_{\mathcal{E}_{\mathbb{T}}} z\\
  = &
j_{4*}\left(j_1^*x \cup j_2^*y \cup j_3^*z \cup \lambda_{-1}(e_{1,2}^*\mathbb{T}^{tw}(\Psi_{1,2})^{\vee}) \cup \lambda_{-1}( \mu_{12,3}^* \mathbb{T}^{tw}(\Psi_{12,3})^{\vee}) \cup \lambda_{-1}( E_{1,2}^{\vee}) \right) \\
=& j_{4*}\left(j_1^*x \cup j_2^*y \cup j_3^*z \cup \lambda_{-1}(e_{2,3}^*\mathbb{T}^{tw}(\Psi_{2,3})^{\vee}) \cup \lambda_{-1}( \mu_{1,23}^* \mathbb{T}^{tw}(\Psi_{1,23})^{\vee}) \cup \lambda_{-1}( E_{2,3}^{\vee}) \right) \\
 = &x \star_{\mathcal{E}_{\mathbb{T}}} (y \star_{\mathcal{E}_{\mathbb{T}}} z)
\end{split}
\end{equation}
where the second identity follows from \eqref{eq:bundle}, the first and the third one follow from the analogous calculation in the proof of Theorem \ref{Associativity1}, replacing the projection formula by the equivariant projection formula \cite[Corollary 5.8]{Thomason1} and 
the excess intersection formula by the equivariant excess intersection formula \cite[Theorem 3.8]{Kock1} for finite l.c.i morphisms.

The same argument works for equivariant higher Chow groups, where equivariant projection formula and excess intersection formula for finite l.c.i morphisms are direct consequences of the corresponding formulas in the non-equivariant setting.
\end{proof}

\begin{proposition}\label{prop:Commutativity}
The orbifold products are graded commutative with identity.
\end{proposition}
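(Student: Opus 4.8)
The plan is to treat the two assertions separately, and to handle the two orbifold products at once: the arguments for $\star_{\mathcal E_{\mathbb T}}$ and $\star_{c_{\mathbb T}}$ are identical once one replaces $\lambda_{-1}((-)^{\vee})$ by $c_{top}(-)$. For the unit I would take $\mathbf 1$ to be the fundamental class $[\mathcal O_X]$ (resp.\ $[X]$) supported on the non-twisted sector $X=X^{\{1\}}\subset I_G(X)$. Decomposing a general class via Proposition~\ref{Morita}, the product $\mathbf 1\star\alpha$ receives a contribution only from the $I^2_G(X)$-sectors of the form $I(\{(1,g)\})$, and $\alpha\star\mathbf 1$ only from those of the form $I(\{(g,1)\})$. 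On $I(\{(1,g)\})$, which Proposition~\ref{Morita} identifies with $X^g$ together with its $Z_G(g)$-action, the structure maps $e_2$ and $\mu$ are both the identity of $X^g$ while $e_1$ is the inclusion $X^g\hookrightarrow X$; and the obstruction bundle vanishes there,
\[
\mathbb T^{tw}(\{(1,g)\}) \;=\; L(1)(\mathbb T|) + L(g)(\mathbb T|) + L(g^{-1})(\mathbb T|) + \mathbb T^{g} - \mathbb T| \;=\; 0,
\]
because $L(1)=0$ and $L(g)(V)+L(g^{-1})(V)=[V]-[V^{g}]$ for any equivariant bundle $V$ on which $g$ acts (the analogue of \eqref{Equation6}, read off directly from the definition of the logarithmic trace). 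Hence $\lambda_{-1}((\mathbb T^{tw})^{\vee})$ (resp.\ $c_{top}(\mathbb T^{tw})$) restricts to $1$ on $I(\{(1,g)\})$, so $\mathbf 1\star\alpha=\sum_g \mu_*\!\big(e_1^*\mathbf 1\cup e_2^*\alpha_{\{g\}}\cup 1\big)=\sum_g \alpha_{\{g\}}=\alpha$; the identity $\alpha\star\mathbf 1=\alpha$ follows symmetrically, using $\mathbb T^{tw}(\{(g,1)\})=0$ by the same relation.

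For graded commutativity, by bilinearity and Proposition~\ref{Morita} it is enough to fix $g_1,g_2\in G$ and prove, for homogeneous $\alpha\in K^{Z_G(g_1)}_\bullet(X^{g_1})$ of degree $m$ and $\beta\in K^{Z_G(g_2)}_\bullet(X^{g_2})$ of degree $n$, that $\alpha\star\beta=(-1)^{mn}\,\beta\star\alpha$ once $K^{Z_G(g_1g_2)}_\bullet(X^{g_1g_2})$ and $K^{Z_G(g_2g_1)}_\bullet(X^{g_2g_1})$ are identified canonically as in Proposition~\ref{Morita} (via conjugation by $g_1$, since $g_2g_1=g_1^{-1}(g_1g_2)g_1$); and likewise for higher Chow groups, with the simplicial degrees. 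The relevant part of $\alpha\star\beta$ is computed on the sector $I(\{(g_1,g_2)\})$ and that of $\beta\star\alpha$ on $I(\{(g_2,g_1)\})$; both are modelled on the same $Z_G(g_1,g_2)$-variety $X^{g_1,g_2}$, and the tautological swap exchanges the structure maps: the $(g_2,g_1)$-sector's ``$e_1$'' is the $(g_1,g_2)$-sector's $e_2$, its ``$e_2$'' is $e_1$, and its ``$\mu$'' is $\mu$ composed with the canonical identification $X^{g_2g_1}\cong X^{g_1g_2}$ — concretely, $g_1$ acts trivially on $X^{g_1,g_2}$, so ``$g_1\cdot$'' restricted to $X^{g_1,g_2}$ is literally the inclusion $X^{g_1,g_2}\hookrightarrow X^{g_1g_2}$, i.e.\ $\mu$ itself. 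Moreover the obstruction bundle is symmetric: in the formula of Definition~\ref{def:TwPullBack} the only term not manifestly symmetric in $(g_1,g_2)$ is $L((g_1g_2)^{-1})(\mathbb T|)$ against $L((g_2g_1)^{-1})(\mathbb T|)$, and on $X^{g_1,g_2}$ the fibrewise automorphisms of $\mathbb T|$ induced by $g_1g_2$ and by $g_2g_1$ are conjugate through the fibrewise automorphism induced by $g_1$, which is $Z_G(g_1,g_2)$-equivariant because $g_1$ commutes with $Z_G(g_1,g_2)$ and fixes $X^{g_1,g_2}$; hence the two logarithmic traces have the same eigenvalues with equivariantly isomorphic eigenbundles, and $\mathbb T^{tw}(\{(g_1,g_2)\})=\mathbb T^{tw}(\{(g_2,g_1)\})$ in $K_0^{Z_G(g_1,g_2)}(X^{g_1,g_2})$. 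Combining these, and using only the graded commutativity of the cup product (\cite[Corollary~5.7]{Bloch1}, resp.\ of the $K$-theory product) together with the fact that $\lambda_{-1}((\mathbb T^{tw})^{\vee})$ (resp.\ $c_{top}(\mathbb T^{tw})$) sits in degree $0$, one obtains
\[
\beta\star\alpha \;=\; \mu_*\!\big(e_2^*\beta\cup e_1^*\alpha\cup\lambda_{-1}((\mathbb T^{tw})^{\vee})\big) \;=\; (-1)^{mn}\,\mu_*\!\big(e_1^*\alpha\cup e_2^*\beta\cup\lambda_{-1}((\mathbb T^{tw})^{\vee})\big) \;=\; (-1)^{mn}\,\alpha\star\beta
\]
under the canonical identification above; the verbatim computation proves the statement for $\star_{c_{\mathbb T}}$.

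The one genuinely delicate point, and the step I expect to cost the most care, is the bookkeeping of the isomorphisms of Proposition~\ref{Morita} while matching the sectors $I(\{(g_1,g_2)\})$ and $I(\{(g_2,g_1)\})$ and the targets $X^{g_1g_2}$ and $X^{g_2g_1}$: one must check that the tautological swap is compatible with $e_1,e_2,\mu$ and with the equivariant structure of $\mathbb T^{tw}$ after everything has been transported through these canonical identifications. In the finite-group case of \S\ref{sect:FiniteGroupQuotient} this was packaged into the tidy identities $\mathcal R(g,h)=\mathcal R(h,h^{-1}gh)$ and $(h^*x)|_{X^{\langle g,h\rangle}}=x|_{X^{\langle g,h\rangle}}$; in the present generality the corresponding facts are the equivariant symmetry $\mathbb T^{tw}(\{(g_1,g_2)\})=\mathbb T^{tw}(\{(g_2,g_1)\})$ and the triviality of the $g_1$-action on $X^{g_1,g_2}$, and this matching is exactly the content of the analogous step in \cite{EJK1}. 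Everything else is the same formal manipulation of pull-backs and push-forwards already carried out in the proof of Theorem~\ref{Associativity1}.
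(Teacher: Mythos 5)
Your proposal is correct and follows essentially the same route as the paper: the paper's proof handles commutativity via the global involution $i$ on $I^2_G(X)$ exchanging the two factors, together with $i^*\mathbb{T}^{tw}(\Psi)=\mathbb{T}^{tw}(i(\Psi))$, $e_1\circ i=e_2$, $e_2\circ i=e_1$ and the compatibility of $\mu$ with $i$ — which is exactly your sector-by-sector "tautological swap," including the key observation that $g_1$ acts trivially on $X^{g_1,g_2}$ so the two multiplication maps agree. The paper dismisses the unit as "straightforward to check," so your explicit verification that $\mathbb{T}^{tw}(\{(1,g)\})=0$ via $L(1)=0$ and $L(g)(V)+L(g^{-1})(V)=[V]-[V^g]$ is a welcome but equivalent elaboration.
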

\begin{proof}

It is straightforward to check that the identity for $\star_{\mathcal{E}_{\mathbb{T}}}$ is $[\mathcal{O}_{I_G(\{1\})}] \in K_0^G(I_G(X))$ and the identify for $\star_{c_{\mathbb{T}}}$ is $[I_G(\{1\})] \in \CH^{*}_G(I_G(X))$.

We prove the graded commutativity for $\star_{\mathcal{E}_{\mathbb{T}}}$. The case of $\star_{c_{\mathbb{T}}}$ is similar.

Let $i: I^2_G(X) \to I^2_G(X)$ be the involution induced the by involution $i: G^2 \to G^2$ which exchanges the factors. We have
$$
i^*(\mathbb{T}^{tw}(\Psi)) = \mathbb{T}^{tw}(i (\Psi))
$$
for any diagonal conjugacy class $\Psi $ in $G^2$.

Let $\Psi_1 = e_1 (\Psi)$, $\Psi_2 = e_2 (\Psi)$ and $\Psi_3 = \mu (\Psi)$.
Let $\alpha \in K^G_{\bullet}(I(\Psi_1))$ and $\beta \in K^G_{\bullet} (I(\Psi_2))$. The product $\alpha \star_{\mathcal{E}_{\mathbb{T}}} \beta$ has a contribution in $K^G_{\bullet} (I(\Psi_3))$ given by
$$
\mu_*(e_1^*(\alpha) \cup e_2^*(\beta) \cup \lambda_{-1} \mathbb{T}^{tw}(\Psi)).
$$
The product $\beta \star_{\mathcal{E}_{\mathbb{T}}} \alpha$ has a contribution in $K^G_{\bullet} (I(\Psi_3))$ given by
$$
\mu_*(e_1^*(\beta) \cup e_2^*(\alpha) \cup \lambda_{-1} \mathbb{T}^{tw}(i (\Psi))).
$$

Since $e_1 \circ i = e_2$, $e_2 \circ i = e_1$ and the diagram 
\[
\begin{tikzcd}
I^2_G(\Psi) \arrow[r, "\mu"] \arrow[d, "i"]
& I_G(\Psi_3) \arrow[d, "id"] \\
I^2_G(i(\Psi)) \arrow[r, "\mu"]
& I_G(\Psi_3)
\end{tikzcd}
\]
is Cartesian, we have 
\begin{align*}
\mu_*(e_1^*(\beta) \cup e_2^*(\alpha) \cup \lambda_{-1} \mathbb{T}^{tw}(i (\Psi))) =
 & \mu_*(i^*e_2^*(\beta) \cup i^*e_1^*(\alpha) \cup \lambda_{-1} i^* \mathbb{T}^{tw}(\Psi)) \\
= & \mu_* \left(i^*(e_2^*(\beta) \cup e_1^*(\alpha) \cup \lambda_{-1} \mathbb{T}^{tw}(\Psi)) \right) \\
= & \mu_*(e_2^*(\beta) \cup e_1^*(\alpha) \cup \lambda_{-1} \mathbb{T}^{tw}(\Psi)).
\end{align*}
\end{proof}

Similarly to \cite[Definition 7.3]{EJK1}, we introduce the following notion which relates two (higher) orbifold theories. 
\begin{definition}
The \emph{orbifold Chern character} is the map
$$
\mathfrak{ch}: K_{\bullet}^{G}(I_G(X)) \to  \CH^*_{G}(I_G(X), \bullet)
$$
given by the formula
$$
\mathfrak{ch}(\mathcal{F}_{\Psi}): = \mathbf{ch}(\mathcal{F}_{\Psi}) \mathbf{td}(-L(\Psi)(\mathbb{T}))
$$
for any $\mathcal{F}_{\Psi} \in K_{\bullet}^{G}(I(\Psi))$. 
\end{definition}

\begin{theorem} \label{thm:completion}
The map $\mathfrak{ch}: K_{\bullet}^{G}(I_G(X)) \to \CH^*_{G}(I_G(X), \bullet)$ is a ring homomorphism with respect to orbifold products $\star_{\mathcal{E}_{\mathbb{T}}}$ and $ \star_{c_{\mathbb{T}}}$.
Moreover, this map factors through the completion and gives rise to an isomorphism
\begin{equation} \label{Orbifold Chern Completion}
\mathfrak{ch}: K_{\bullet}^{G}(I_G(X))^{\wedge} \to \CH^*_{G}(I_G(X), \bullet).
\end{equation}
\end{theorem}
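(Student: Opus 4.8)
The statement has two independent parts: that $\mathfrak{ch}$ intertwines the orbifold products $\star_{\mathcal E_{\mathbb T}}$ and $\star_{c_{\mathbb T}}$, and that the induced map on the $I_G$-adic completion is an isomorphism. The plan for the first part is to transplant the computation proving Theorem \ref{thm:stringChern} into the general equivariant, twisted-pullback framework of Definition \ref{def:OrbProdGeneral}, using Krishna's equivariant higher Riemann--Roch in place of Theorem \ref{Riemann--Roch2} and Bloch's higher Chern character in place of the classical one; the second part will be a formal consequence of Krishna's Atiyah--Segal completion theorem (Theorem \ref{Completion}).

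For multiplicativity I would work one diagonal conjugacy class at a time. Fix $\Psi$ in $G^2$ with representative $\mathbf g=(g_1,g_2)$ and set $\Psi_1=e_1(\Psi)$, $\Psi_2=e_2(\Psi)$, $\Psi_3=\mu(\Psi)$; by Proposition \ref{prop:l.c.i} (after the Morita reduction of Proposition \ref{Morita} one may instead take $\mu\colon X^{\mathbf g}\to X^{g_1g_2}$ with its $Z_G(\mathbf g)$-action) the map $\mu$ is a proper l.c.i.\ morphism of smooth schemes, so Krishna's equivariant Riemann--Roch applies. Given $\alpha\in K^G_\bullet(I(\Psi_1))$, $\beta\in K^G_\bullet(I(\Psi_2))$, I expand $\mathfrak{ch}(\alpha\star_{\mathcal E_{\mathbb T}}\beta)$ exactly as in the proof of Theorem \ref{thm:stringChern}: push $\mathbf{ch}$ through $\mu_*$ using equivariant GRR (which inserts the Todd class of the virtual relative tangent bundle of $\mu$), use that $\mathbf{ch}$ is multiplicative and commutes with $e_1^*,e_2^*$, rewrite $\mathbf{ch}(\lambda_{-1}(\mathbb T^{tw}(\Psi))^\vee)=c_{top}(\mathbb T^{tw}(\Psi))\,\mathbf{td}(-\mathbb T^{tw}(\Psi))$ via \eqref{Todd-Chern} (legitimate since $\mathbb T^{tw}(\Psi)$ is non-negative by \cite[Proposition 4.6]{EJK1}), and apply the equivariant projection formula to absorb all Todd factors into $\mu_*$. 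Expanding $\mathfrak{ch}(\alpha)\star_{c_{\mathbb T}}\mathfrak{ch}(\beta)$ straight from the definitions, both sides become $\mu_*$ of $e_1^*\mathbf{ch}(\alpha)\cdot e_2^*\mathbf{ch}(\beta)\cdot c_{top}(\mathbb T^{tw}(\Psi))$ multiplied by a single Todd class, and equality amounts to an identity in $K_0^G(I(\Psi))\otimes\mathbb Q$ which is the verbatim analogue of \eqref{eq:compare}, obtained from it by replacing the fixed loci $X^{\mathbf g}, X^g$ by the sectors $I(\Psi), I(\Psi_3)$, the obstruction bundle $\mathcal R$ by $\mathbb T^{tw}(\Psi)$, and each $\Im$ by the corresponding logarithmic trace $L(-)(\mathbb T)$ (with $\mathbb T$ in place of $T_X$). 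This $K_0$-identity is checked by unwinding Definition \ref{def:TwPullBack} together with additivity of the logarithmic trace and the relation $L(g)(V)+L(g^{-1})(V)=V-V^g$, as in \cite{EJK1}; I expect its verification — in particular the cancellation of the Lie-algebra contributions coming from $\mathbb T=T_X-\mathfrak g$ and from the relative tangent bundle of $\mu$ — to be the principal bookkeeping obstacle.

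For the isomorphism after completion, observe that on the $\Psi$-sector $\mathfrak{ch}$ is, by definition, the composite of the equivariant higher Chern character $\mathbf{ch}^{G}$ (introduced just before Theorem \ref{Completion}) with multiplication by $\mathbf{td}(-L(\Psi)(\mathbb T))$, an invertible element of $\bigoplus_p\CH^p_G(I(\Psi))\otimes\mathbb Q$ with degree-$0$ component $1$. Since $G$ acts on $I_G(X)$ with finite stabilizers, $\CH^p_G(I_G(X),n)$ vanishes for $p\gg0$, so the target of $\mathbf{ch}^G$ is the direct sum $\bigoplus_{p,n}\CH^p_G(I_G(X),n)$ and Theorem \ref{Completion} applies to $I_G(X)$. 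As $\mathbf{ch}^G$ carries $I_G^mK_\bullet^G(I_G(X))$ into $\bigoplus_{p\ge m,\,n}\CH^p_G$ (the discussion preceding Theorem \ref{Completion}) and multiplication by a unit with leading term $1$ preserves this filtration, $\mathfrak{ch}$ has the same property; hence it factors through the $I_G$-adic completion, and the induced map \eqref{Orbifold Chern Completion} is the composite of the completed isomorphism of Theorem \ref{Completion} with the $\mathbb Q$-linear automorphism given by sectorwise multiplication by $\mathbf{td}(-L(\Psi)(\mathbb T))$, hence an isomorphism. This last step is routine once Theorem \ref{Completion} is granted.
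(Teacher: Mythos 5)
Your proposal is correct and follows essentially the same route as the paper: multiplicativity is reduced to the computation of Theorem \ref{thm:stringChern} carried out with the equivariant Riemann--Roch theorem and projection/excess formulas, culminating in exactly the $K_0$-identity you isolate — which the paper does not reverify but simply quotes as \cite[Equation (42)]{EJK1} — and the completed isomorphism is obtained from Theorem \ref{Completion} together with the invertibility of $\mathbf{td}(-L(\Psi)(\mathbb{T}))$.
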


\begin{proof}
Given conjugacy classes $\Psi_1$, $\Psi_2$ in $G$ and elements $\alpha_1 \in K^G_{\bullet} (I(\Psi_1))$, $\alpha_2 \in K^G_{\bullet} (I(\Psi_2))$, 
let $\Psi_{1,2}$ be a diagonal conjugacy class in $G^2$ such that $e_1(\Psi_{1,2}) = \Psi_1$, $e_2(\Psi_{1,2}) = \Psi_2$. Let $\Psi_{12}: = \mu(\Psi_{1,2})$. 
By a similar argument as in the proof of Theorem \ref{thm:stringChern}, we have that $\mathfrak{ch} (\alpha_1 \star_{\mathcal{E}_{\mathbb{T}}} \alpha_2) $ is equal to 
$$ \mu_{*}[e_1^* \mathbf{ch}(\alpha_1) e_2^* \mathbf{ch}(\alpha_2) 
c_{top} (\mathbb{T}^{tw}(\Psi_{1,2})) \mathbf{td} (T_{I(\Psi_{1,2})} -\mu^* T_{I(\Psi_{12})} - \mathbb{T}^{tw}(\Psi_{1,2}) - \mu^* L(\Psi_{12})(\mathbb{T})]
$$
while $\mathfrak{ch}(\alpha_1) \star_{c_{\mathbb{T}}} \mathfrak{ch}(\alpha_2) $ is equal to 
$$
\mu_* [ e_1^* \mathbf{ch}(\alpha_1) e_2^* \mathbf{ch}(\alpha_2) c_{top}(\mathbb{T}^{tw}(\Psi_{1,2}))
\mathbf{td}(-e_1^*L(\Psi_1)(\mathbb{T}) - e_2^* L (\Psi_2)(\mathbb{T})].
$$
The identity
$$
T_{I(\Psi_{1,2})} -\mu^* T_{I(\Psi_{12})} - \mathbb{T}^{tw}(\Psi_{1,2}) - \mu^* L(\Psi_{12})(\mathbb{T}) = -e_1^*L(\Psi_1)(\mathbb{T}) - e_2^* L (\Psi_2)(\mathbb{T})
$$
in $K_{\bullet}^G(I(\Psi_{1,2}))$ (compare to \eqref{eq:compare}) is verified in \cite[Equation (42)]{EJK1}. This yields the desired result.

The factorization \eqref{Orbifold Chern Completion} follows from the corresponding property of the higher Chern character.
The bijectivity follows from the completion theorem (Theorem \ref{Completion}) and the invertibility of $\mathbf{td}(-L(\Psi)(\mathbb{T}))$ in $\CH^*_{G}(I_G(X), \bullet)$. 
\end{proof}

\begin{theorem}
Let $G$ be a group and $X$, $Y$ be smooth projective varieties endowed with $G$-actions. 
Let $f: X \to Y$ be a $G$-equivariant \'etale morphism. Then 
\begin{enumerate}[$(i)$]
\item The pull-backs
$$
f^*: K^G_{\bullet} (I_G(Y)) \to K^G_{\bullet} (I_G(X))
$$
and 
$$
f^*: \CH^{*}_G(I_G(Y), \bullet) \to \CH^{*}_G (I_G(X), \bullet) 
$$
are ring homomorphisms with respect to the orbifold products. Moreover, the orbifold higher Chern character commutes with pull-backs.

\item {\rm(Riemann--Roch)} For any conjugacy class $\Psi$ in $G$, let $I_X(\Psi) \subset I_G(X)$ and $I_Y(\Psi) \subset I_G(Y)$ be the corresponding subvarieties. Then the following diagram commutes
\[
\begin{tikzcd}
K_{\bullet}^{G}(I_X(\Psi)) \arrow[r, "f_{*}"] \arrow[d,"\mathbf{td}^G(T_{I_X(\Psi)})\mathfrak{ch}(-)"]
& K_{\bullet}^{G}(I_Y(\Psi)) \arrow[d, "\mathbf{td}^G(T_{I_Y(\Psi)})\mathfrak{ch}(-)"] \\
\CH^{*}_{G}(I_X(\Psi), \bullet) \arrow[r,  "f_{*}"] & \CH^{*}_{G}(I_Y(\Psi), \bullet)
\end{tikzcd}
\]
\end{enumerate}
\end{theorem}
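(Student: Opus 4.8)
The plan is to run the argument on the same template as the proofs of Theorems \ref{Associativity1}, \ref{Associativity3} and \ref{thm:stringChern}: reduce the multiplicativity of $f^*$ to the compatibility of $f$ with the structure maps $e_1,e_2,\mu$ and with the obstruction bundle $\mathbb T^{tw}$, and obtain the Riemann--Roch square from equivariant Grothendieck--Riemann--Roch dressed with the Todd correction that defines $\mathfrak{ch}$.

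\emph{Step 1 (functoriality of the inertia).} Since $f$ is $G$-equivariant it induces $G$-equivariant morphisms $I_Gf\colon I_G(X)\to I_G(Y)$ and $I^2_Gf\colon I^2_G(X)\to I^2_G(Y)$ commuting with $e_1,e_2,\mu,\sigma$; on the sector indexed by a (diagonal) conjugacy class $\Psi$ with representative $\mathbf g$ these restrict to the morphism $X^{\mathbf g}\to Y^{\mathbf g}$ coming from $f$. I would first check this restriction is étale: $X\times_Y Y^{\mathbf g}$ is étale over $Y^{\mathbf g}$ and carries a $\langle\mathbf g\rangle$-action over $Y^{\mathbf g}$ (as $f$ is equivariant and $\langle\mathbf g\rangle$ acts trivially on $Y^{\mathbf g}$), and $X^{\mathbf g}$ is its fixed locus, hence open and closed in it, so $X^{\mathbf g}\to Y^{\mathbf g}$ is étale. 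By definition $f^*$ on $K^G_\bullet(I_G(-))$ and $\CH^*_G(I_G(-),\bullet)$ is pull-back along $I_Gf$. Moreover $f$ étale gives $f^*T_Y=T_X$, hence $f^*\mathbb T_Y=\mathbb T_X$; since the twisted pull-back of \cite{EJK1} is built functorially out of restriction to fixed loci, logarithmic traces and invariant subbundles — all compatible with the étale pull-backs above — one gets $(I^2_Gf)^*\mathbb T^{tw}_Y=\mathbb T^{tw}_X$ and, sector by sector, $f^*L(\Psi)(\mathbb T_Y)=L(\Psi)(\mathbb T_X)$.

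\emph{Step 2 ($f^*$ is a ring homomorphism).} The crux is that base change along $I_Gf$ turns the structure maps $e_1,e_2,\mu$ of $I^2_G$ into Cartesian squares with trivial excess normal bundle; this is exactly where the étale hypothesis on $f$ is used, and verifying it is the main obstacle — sector by sector it is the claim that the fibre products computing the fixed loci on the $X$-side agree with the base change, along the étale map $X^{g_1g_2}\to Y^{g_1g_2}$, of the inclusion $Y^{g_1,g_2}\hookrightarrow Y^{g_1g_2}$. Granting it, pull back $\alpha\star_{\mathcal E_{\mathbb T}}\beta=\mu_*\big(e_1^*\alpha\cup e_2^*\beta\cup\lambda_{-1}(\mathbb T^{tw}_Y)^\vee\big)$ along $I_Gf$, move $f^*$ past $\mu_*$ by flat base change, rewrite the integrand using $e_i\circ I^2_Gf=I_Gf\circ e_i$ and $(I^2_Gf)^*\mathbb T^{tw}_Y=\mathbb T^{tw}_X$, and conclude with the projection formula (Proposition \ref{Projection formula}) that $f^*(\alpha\star_{\mathcal E_{\mathbb T}}\beta)=f^*\alpha\star_{\mathcal E_{\mathbb T}}f^*\beta$ — this is the computation of Theorem \ref{Associativity1} with a single map $\mu$ in place of the iterated product. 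Replacing $\lambda_{-1}(-)^\vee$ by $c_{top}(-)$ gives the statement for $\CH^*_G(I_G(-),\bullet)$. That $\mathfrak{ch}$ commutes with $f^*$ is then formal: $\mathbf{ch}$ commutes with pull-backs, so $\mathfrak{ch}(f^*\mathcal F_\Psi)=\mathbf{ch}(f^*\mathcal F_\Psi)\,\mathbf{td}(-L(\Psi)(\mathbb T_X))=f^*\big(\mathbf{ch}(\mathcal F_\Psi)\,\mathbf{td}(-L(\Psi)(\mathbb T_Y))\big)=f^*\mathfrak{ch}(\mathcal F_\Psi)$ by Step 1.

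\emph{Step 3 (the Riemann--Roch square).} Being étale and a morphism of projective varieties, $f$ is finite, so it restricts to a finite $G$-equivariant morphism $I_X(\Psi)\to I_Y(\Psi)$ of smooth projective varieties and $f_*$ is defined there. Apply equivariant Grothendieck--Riemann--Roch (Krishna's theorem) to this restricted $f$: $\mathbf{td}^G(T_{I_Y(\Psi)})\,\mathbf{ch}(f_*\xi)=f_*\big(\mathbf{td}^G(T_{I_X(\Psi)})\,\mathbf{ch}(\xi)\big)$. Multiplying by $\mathbf{td}(-L(\Psi)(\mathbb T_Y))$, using the projection formula and $f^*\mathbf{td}(-L(\Psi)(\mathbb T_Y))=\mathbf{td}(-L(\Psi)(\mathbb T_X))$, the left-hand side becomes $\mathbf{td}^G(T_{I_Y(\Psi)})\,\mathfrak{ch}(f_*\xi)$ and the right-hand side becomes $f_*\big(\mathbf{td}^G(T_{I_X(\Psi)})\,\mathfrak{ch}(\xi)\big)$, which is exactly the commutativity asserted. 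Thus $(i)$ is the multiplicativity computation of Theorems \ref{Associativity1}/\ref{thm:stringChern} transported along $f$, $(ii)$ is equivariant GRR with the Todd-class correction defining $\mathfrak{ch}$, and the only non-formal ingredient — the main obstacle — is the Cartesian-ness with no excess bundle verified in Step 2.
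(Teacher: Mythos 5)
Your overall strategy is the same as the paper's: part $(ii)$ and the compatibility of $\mathfrak{ch}$ with $f^*$ are exactly the paper's argument (equivariant Grothendieck--Riemann--Roch plus the projection formula and the identities $f^*\mathbb{T}_Y=\mathbb{T}_X$, $f^*L(\Psi)(\mathbb{T}_Y)=L(\Psi)(\mathbb{T}_X)$, $f^*\mathbb{T}^{tw}_Y=\mathbb{T}^{tw}_X$), and your Step 1 and Step 3 are correct. For part $(i)$ you actually go further than the paper, which disposes of it in one line: you correctly isolate the real content, namely that the square with horizontal arrows $\mu_X\colon I^2_G(X)\to I_G(X)$ and $\mu_Y\colon I^2_G(Y)\to I_G(Y)$ and vertical arrows induced by $f$ must be Cartesian with vanishing excess bundle, so that $f^*$ can be moved past $\mu_*$.

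But you then write ``granting it,'' and this is a genuine gap: the Cartesian claim is not just the main obstacle, it is false under the stated hypotheses. Sector by sector it asserts $X^{g_1,g_2}=X^{g_1g_2}\times_{Y^{g_1g_2}}Y^{g_1,g_2}$, i.e.\ that $f$ \emph{reflects} fixed points, and a $G$-equivariant \'etale morphism need not do this: $gx$ and $x$ lie in the same fibre of $f$ over $Y^{g}$ without having to coincide. Concretely, take $G$ finite, $Y=\operatorname{Spec}\mathbb{C}$ with the trivial action and $X=G$ with the translation action; $f\colon X\to Y$ is $G$-equivariant and finite \'etale between smooth projective varieties. Then $I_G(X)$ has only the untwisted sector, so $f^*$ annihilates every twisted sector of $K_0^G(I_G(Y))=\bigoplus_{g\in G}R(G)$, whereas the orbifold product of the unit of the $g$-sector with the unit of the $g^{-1}$-sector is the unit of the untwisted sector (all obstruction classes vanish here), whose pullback is nonzero. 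Hence $f^*$ is not a ring homomorphism in this example, and no proof along these lines (including the paper's own, which silently skips the point you flagged) can succeed without an additional hypothesis such as $X^{\mathbf{g}}=f^{-1}(Y^{\mathbf{g}})$ for all $\mathbf{g}$, equivalently $I^n_G(X)\cong X\times_Y I^n_G(Y)$. Under that extra hypothesis your Step 2 computation (base change for the Tor-independent Cartesian square plus the projection formula, and $c_{top}$ in place of $\lambda_{-1}(-)^{\vee}$ for higher Chow groups) does go through.
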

\begin{proof}
Since $f$ is \'etale, $f^*T_Y = T_X$, hence $f^{*} \mathbb{T}_Y = \mathbb{T}_X$. It is also clear from the construction that $f^*L(\Psi)(\mathbb{T}_Y) = L(\Psi)(\mathbb{T}_X)$ and $f^*\mathbb{T}^{tw}_Y = \mathbb{T}^{tw}_X$.
This implies part $(i)$.

For part $(ii)$, let $\mathcal{F}_{\Psi} \in K^G_{\bullet}(I_X(\Psi))$. By the projection formula and the equivariant Riemann-Roch theorem, we have
\begin{align*}
f_{*}[\mathbf{td}^G(T_{I_X(\Psi)})\mathfrak{ch}(\mathcal{F}_{\Psi})] & = f_{*}[\mathbf{td}^G(T_{I_X(\Psi)})\mathbf{ch}(\mathcal{F}_{\Psi}) \mathbf{td}(-L(\Psi)(\mathbb{T}_X))] \\
& = f_{*}[\mathbf{td}^G(T_{I_X(\Psi)}) \mathbf{ch}(\mathcal{F}_{\Psi}) f^*(\mathbf{td}(-L(\Psi)(\mathbb{T}_Y)))] \\
& = f_{*}[\mathbf{td}^G(T_{I_X(\Psi)}) \mathbf{ch}(\mathcal{F}_{\Psi})] \mathbf{td}(-L(\Psi)(\mathbb{T}_Y)) \\
& = \mathbf{td}^G(T_{I_Y(\Psi)})\mathbf{ch}(f_*\mathcal{F}_{\Psi})) \mathbf{td}(-L(\Psi)(\mathbb{T}_Y)) \\
& = \mathbf{td}^G(T_{I_Y(\Psi)}) \mathfrak{ch}(f_* \mathcal{F}_{\Psi}).
\end{align*}
\end{proof}

\subsection{Orbifold motives}\label{subsect:OrbMot}
In \cite{MHRCKummer}, the orbifold motive of a given global quotient of a smooth projective variety by a finite group is defined, in the category of Chow motives (with fractional Tate twists). Using the constructions of \cite{EJK1}, we can now treat more generally a Deligne--Mumford stack which is the quotient of a smooth projective variety by a linear algebraic group. 

We keep the notation and assumptions from the previous subsections. Let $X$, $G$ and $\mathcal{X}:=[X/G]$ be as before. The motive of $\mathcal{X}$ is defined as an object in the category $\DM_{\mathbb Q}$ of rational mixed motives, see \cite[\S 2.4]{HPL} (the condition of being \emph{exhaustive} is satisfied since we assumed the action is linearizable).

Recall that in Definition \ref{def:TwPullBack}, a class $\mathbb{T}^{tw} \in K_{0}^{G}(I^{2}_{G}(X))=K_{0}(\mathcal{I}^{2}_{\mathcal{X}})$ is constructed. Just as in Definition \ref{def:OrbProdGeneral}, we consider the image of the class $c_{top}(\mathbb{T}^{tw})\in \CH(\mathcal{I}^{2}_{\mathcal{X}})$ via the push-forward by the following proper morphism
$$(e_{1}, e_{2}, \mu): \mathcal{I}^{2}_{\mathcal{X}}\to \mathcal{I}_{\mathcal{X}}\times\mathcal{I}_{\mathcal{X}}\times\mathcal{I}_{\mathcal{X}},$$
which is a class $(e_{1}, e_{2}, \mu)_{*}c_{top}(\mathbb{T}^{tw})\in \CH(\mathcal{I}_{\mathcal{X}}\times \mathcal{I}_{\mathcal{X}}\times \mathcal{I}_{\mathcal{X}})$. It is equivalent to a morphism 
$$M(\mathcal{I}_{\mathcal{X}})\otimes M(\mathcal{I}_{\mathcal{X}})\to M(\mathcal{I}_{\mathcal{X}}).$$
By the same argument in Theorem \ref{Associativity2} and Proposition \ref{prop:Commutativity}, one can show that this endows $M(\mathcal{I}_{\mathcal{X}})$ a commutative associative algebra object structure on $M(\mathcal{I}_{\mathcal{X}})$ and it induces the one on $\CH^{*}_{\orb}(\mathcal{X})$. We call this algebra object the \emph{orbifold motive} of $\mathcal{X}=[X/G]$ and denote it by $M_{\orb}(\mathcal{X})$.

In the case where $\mathcal{X}=[X/G]$ with $G$ being a finite group, we recover the orbifold Chow motive $\h_{\orb}(\mathcal{X})$ constructed in \cite{MHRCKummer}, that is $M_{\orb}(\mathcal{X})$ is canonically isomorphic to $\iota\left(\h_{\orb}(\mathcal{X})\right)$ as algebra objects in $\DM_{\mathbb Q}$, where $\iota: \CHM_{\mathbb Q}^{op}\hookrightarrow \DM_{\mathbb Q}$ is the fully faithful tensor functor constructed in \cite{Voevodsky2} which embeds (the opposite category of) the category of Chow motives to the category of mixed motives. 

\subsection{Non-abelian localization for K-theory}
In this subsection, algebraic $K$-theory and higher Chow groups will be considered with complex coefficients. 
We summarize here the results of Edidin--Graham in \cite{EG2} which identify the equivariant $K$-theory of $X$ with a direct summand of the equivariant $K$-theory of its inertia stack.

For any conjugacy class $\Psi$ of $G$, let $m_{\Psi} \subset R(G) \otimes \mathbb{C}$ be the maximal ideal of representations whose virtual characters vanish on $\Psi$.
If $\Psi = \{h\}$ is the conjugacy class of $h$, we will denote $m_{\Psi}$ by $m_h$.
The ideal $m_1$ is simply the augmentation one.
By \cite[Propsition 3.6]{EG2}, there is a decomposition
\begin{equation} \label{eq:factorization}
K_{\bullet}^G(X)\otimes \mathbb{C} \cong \bigoplus_{\Psi} K_{\bullet}^G(X)_{m_{\Psi}}
\end{equation} 
where the sum runs over the finite number of conjugacy classes $\Psi$ such that $I(\Psi)$ is non-empty.

Let $Z$ be an algebraic group acting on a scheme $X$ (or more general, an algebraic space). Let $H$ be a subgroup of the center of $Z$ consisting of semi-simple elements which act trivially on $X$.
There is a natural action of $H$ on $K^Z_{\bullet}(X) \otimes \mathbb{C}$ described as follows.

For any $Z$-equivariant vector bundle $\mathcal{E}$ and any character $\chi$ of $H$, denote $\mathcal{E}_{\chi}$ to be the sub-vectorbundle of $\mathcal{E}$ whose section are given by
$$
\mathcal{E}_{\chi}(U): = \{s \in \mathcal{E}(U) \mid h.s = \chi(h)s\}.
$$
Then $\mathcal{E} = \bigoplus_{\chi} \mathcal{E}_{\chi}$ is a decomposition of $\mathcal{E}$ into the direct sum of $H$-eigenbundles.
Let $\mathbf{Vect}^{\chi,Z}(X)$ be the category of $Z$-equivariant vector bundles on $X$ such that $h$ acts with eigenvalue $\chi(h)$ for any $h \in H$ and let $K_{\bullet}^{\chi, Z}(X)$ 
the $K$-theory of $\mathbf{Vect}^{\chi,Z}(X)$. We have the decomposition
$$
\mathbf{Vect}(Z,X) \xrightarrow{\sim} \prod_{\chi} \mathbf{Vect}^{\chi}(Z,X), \ \mathcal{E} \mapsto (\mathcal{E}_{\chi})_{\chi}
$$
which is obviously exact. This yields a decomposition on $K$-theory
$$
K_n^{Z}(X) \otimes \mathbb{C} = \bigoplus_{\chi} K^{\chi, Z}_n(X) \otimes \mathbb{C}.
$$
Given that, we define the action of $H$ on $K_n^{Z}(X)\otimes \mathbb{C}$ by
$$
h. \mathcal{E} = (\chi(h)^{-1} \mathcal{E}_{\chi})_{\chi}
$$
for any $h \in H$.

When restricting to a point and to $n=0$, this defines an action of $H$ on the representation ring $R(Z)$ so that
$$
h.m_{\Psi} = m_{h \Psi}.
$$
In particular, $h^{-1}.m_h = m_1$, the augmentation ideal of $R(Z)$.

When $Z$ acts on $X$ with finite stabilizers, the decomposition $K_n^{Z}(X) = \bigoplus_{\Psi}K_n^{Z}(X)_{m_{\Psi}}$ is obviously compatible with this $H$-action and we have
$$
h^{-1}.K_n^{Z}(X)_{m_h} = K_n^{Z}(X)_{m_1}.
$$

In the case $Z = Z_G(h)$ and $X$ is $X^h$  where $h$ is a representative of $\Psi$ in $G$, this yields 
$$
h^{-1}.K_n^{Z_G(h)}(X^h)_{m_h} = K_n^{Z_G(h)}(X^h)_{m_1}.
$$
By \eqref{eq:Morita}, $K_n^{Z_G(h)}(X^h)$ is identified with $K_n^{G}(I(\Psi))$. The intersection $\Psi \cap Z$ decomposes into the union of conjugacy classes in $Z$ such that one of them is the conjugacy class of $h$.
Under the decomposition \eqref{eq:factorization}, the localization $K_n^{Z_G(h)}(X^h)_{m_h}$ corresponds to a summand of $K_{\bullet}^{G}(I(\Psi))_{m_{\Psi}}$, which we denote by $K_{\bullet}^{G}(I_G(X))_{c_{\Psi}}$
 \cite[Proposition 3.8]{EG2}.
The action of $h^{-1}$ is then define an isomorphism
$$
t_{\Psi} \colon K_{\bullet}^{G}(I(\Psi))_{c_{\Psi}} = K_{\bullet}^{G}(I(\Psi))_{m_1}.
$$
It is straightforward to check that this isomorphism is independent of the choice of $h$ in $\Psi$. These maps $t_{\Psi}$ resemble to define an isomorphism
$$
t: \bigoplus_{\Psi} K_{\bullet}^{G}(I(\Psi))_{c_{\Psi}} \to K_{\bullet}^{G}(I_G(X))_{m_1} = \bigoplus_{\Psi} K_{\bullet}^G(I(\Psi))_{m_1}.  
$$

The map $j: = \pi|_{I(\Psi)}: I(\Psi) \to X$ is a finite l.c.i morphism (Proposition \ref{prop:l.c.i}). Let $N_{\Psi}$ be the corresponding relative tangent bundle.
Then $j$ induces an ismorphism on the localization of $K$-groups
$$
j_*: K_{\bullet}^{G}(I(\Psi))_{c_{{\Psi}}} \to K_{\bullet}^{G}(X)_{m_{\Psi}}
$$
satisfying
\begin{equation*} 
\alpha = j_*\left(\frac{j^* \alpha}{\lambda_{-1}(N_{\Psi}^*)}\right)
\end{equation*}
for any $\alpha \in K_{\bullet}^G(X)_{m_{\Psi}}$ \cite[Theorem 3.3]{EG2}.

\begin{definition}
We define
\begin{align*}
\varphi \colon K_{\bullet}^{G}(X)\otimes \mathbb{C} & \to K_{\bullet}^{G}(I_G(X))_{m_1} \\
\alpha_{\Psi} & \mapsto t\left(\frac{j^*\alpha_{\Psi}}{\lambda_{-1}(N^*_{\Psi})} \right)
\end{align*}
for any $\alpha_{\Psi} \in K_{\bullet}^{G}(X)_{m_{\Psi}}$.
\end{definition}

\begin{proposition}
The map $\varphi$ is an isomorphism whose inverse is $\varphi^{-1} = j_* \circ t^{-1}$.
\end{proposition}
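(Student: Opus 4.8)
The plan is to reduce the statement to the two isomorphisms already recorded in the discussion preceding it, namely
$$
j_{*}\colon K_{\bullet}^{G}(I(\Psi))_{c_{\Psi}}\xrightarrow{\sim}K_{\bullet}^{G}(X)_{m_{\Psi}}
\qquad\text{and}\qquad
t_{\Psi}\colon K_{\bullet}^{G}(I(\Psi))_{c_{\Psi}}\xrightarrow{\sim}K_{\bullet}^{G}(I(\Psi))_{m_{1}},
$$
together with the projection-type identity $\alpha=j_{*}\!\left(j^{*}\alpha/\lambda_{-1}(N_{\Psi}^{*})\right)$ from \cite[Theorem 3.3]{EG2}. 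First I would observe that, by construction, $\varphi$ is compatible with the decomposition \eqref{eq:factorization} of its source and with the decomposition $K_{\bullet}^{G}(I_{G}(X))_{m_{1}}=\bigoplus_{\Psi}K_{\bullet}^{G}(I(\Psi))_{m_{1}}$ of its target, the index $\Psi$ running over the finitely many conjugacy classes of $G$ with $I(\Psi)\ne\emptyset$ (Proposition \ref{prop:l.c.i}). Thus $\varphi=\bigoplus_{\Psi}\varphi_{\Psi}$ with
$$
\varphi_{\Psi}\colon K_{\bullet}^{G}(X)_{m_{\Psi}}\longrightarrow K_{\bullet}^{G}(I(\Psi))_{m_{1}},\qquad
\alpha_{\Psi}\longmapsto t_{\Psi}\!\left(\frac{j^{*}\alpha_{\Psi}}{\lambda_{-1}(N_{\Psi}^{*})}\right),
$$
and it suffices to prove that each $\varphi_{\Psi}$ is an isomorphism with inverse $j_{*}\circ t_{\Psi}^{-1}$, and then take the direct sum.

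Next I would isolate the ``middle'' map $\psi_{\Psi}\colon K_{\bullet}^{G}(X)_{m_{\Psi}}\to K_{\bullet}^{G}(I(\Psi))_{c_{\Psi}}$, $\alpha\mapsto j^{*}\alpha/\lambda_{-1}(N_{\Psi}^{*})$. That this is well defined, i.e.\ that $\lambda_{-1}(N_{\Psi}^{*})$ is a unit in the $c_{\Psi}$-summand, is part of the content of \cite[Theorem 3.3]{EG2} and I would simply invoke it; heuristically it reflects the fact that a generator of $\Psi$ acts on the conormal bundle $N_{\Psi}^{*}$ of the twisted sector without the eigenvalue $1$, so its $\lambda_{-1}$-class becomes invertible after localization. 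The cited identity $\alpha=j_{*}(\psi_{\Psi}(\alpha))$ says exactly that $j_{*}\circ\psi_{\Psi}=\operatorname{id}$ on $K_{\bullet}^{G}(X)_{m_{\Psi}}$; since $j_{*}$ is bijective, a right inverse is automatically a two-sided inverse, whence $\psi_{\Psi}=(j_{*})^{-1}$. Consequently $\varphi_{\Psi}=t_{\Psi}\circ\psi_{\Psi}=t_{\Psi}\circ(j_{*})^{-1}$ is a composition of isomorphisms, hence an isomorphism, with inverse $(\varphi_{\Psi})^{-1}=j_{*}\circ t_{\Psi}^{-1}$.

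Finally I would reassemble the sectors. Since $t=\bigoplus_{\Psi}t_{\Psi}$ and $j_{*}$ acts summand-wise as the isomorphism $K_{\bullet}^{G}(I(\Psi))_{c_{\Psi}}\to K_{\bullet}^{G}(X)_{m_{\Psi}}$, one gets $j_{*}\circ t^{-1}=\bigoplus_{\Psi}\bigl(j_{*}\circ t_{\Psi}^{-1}\bigr)=\bigoplus_{\Psi}(\varphi_{\Psi})^{-1}=\varphi^{-1}$, which is the assertion. I do not anticipate a genuine obstacle here: once the decomposition into $\Psi$-sectors is set up, the argument is a formal composition of the two isomorphisms above, and the only delicate point — invertibility of $\lambda_{-1}(N_{\Psi}^{*})$ on the relevant summand, together with the matching of the $c_{\Psi}$-summand of $K_{\bullet}^{G}(I(\Psi))_{m_{\Psi}}$ with $t_{\Psi}$ and $j_{*}$ — has already been taken care of in the preceding construction and in \cite[Propositions 3.6, 3.8 and Theorem 3.3]{EG2}.
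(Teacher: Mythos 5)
Your argument is correct, but it handles the second composition differently from the paper, so a short comparison is worth recording. The paper's proof checks both composites directly: $(j_*\circ t^{-1})\circ\varphi=\mathrm{id}$ via the identity $\alpha=j_*\bigl(j^*\alpha/\lambda_{-1}(N_\Psi^*)\bigr)$ from \cite[Theorem 3.3]{EG2}, and $\varphi\circ(j_*\circ t^{-1})=\mathrm{id}$ via the self-intersection formula $j^*j_*(\gamma)=\gamma\cup\lambda_{-1}(N_\Psi^*)$ of \cite[Theorem 3.8]{Kock1}, which cancels against the division by $\lambda_{-1}(N_\Psi^*)$. You instead check only the first composite and then appeal to the fact, already stated before the proposition with reference to \cite[Theorem 3.3]{EG2}, that the localized push-forward $j_*\colon K_\bullet^G(I(\Psi))_{c_\Psi}\to K_\bullet^G(X)_{m_\Psi}$ is bijective, so that the section $\psi_\Psi(\alpha)=j^*\alpha/\lambda_{-1}(N_\Psi^*)$ is forced to be its two-sided inverse; this is legitimate and slightly more economical, since it avoids invoking the self-intersection formula, but it leans on the full strength of the Edidin--Graham nonabelian localization theorem (bijectivity of $j_*$ on the $c_\Psi$-summand), whereas the paper's computation only needs the one displayed identity from that theorem together with K\"ock's excess/self-intersection formula, and in effect re-proves the bijectivity along the way. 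Your sector-by-sector bookkeeping ($\varphi=\bigoplus_\Psi\varphi_\Psi$, $t=\bigoplus_\Psi t_\Psi$) matches the construction preceding the proposition, and the remark that invertibility of $\lambda_{-1}(N_\Psi^*)$ after localization is part of \cite[Theorem 3.3]{EG2} is exactly the point that makes $\varphi$ well defined in the first place.
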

\begin{proof}
For any $\alpha_{\Psi} \in K_{\bullet}^G(X)_{m_{\Psi}}$, we have
$$
(j_* \circ t^{-1})(\varphi \alpha_{\Psi}) = j_* \left(\frac{j^* \alpha_{\Psi}}{\lambda_{-1}(N^*_{\Psi})} \right) = \alpha_{\Psi}.
$$
Conversely, for any $\beta_{\Psi} \in K_{\bullet}^{G}(I_G(X))_{m_1}$,
$$
\varphi (j_* t^{-1}(\beta_{\Psi})) = t \left( \frac{j^* j_* (t^{-1}{\beta_{\Psi}})}{\lambda_{-1}(N^*(\Psi))} \right) = t(t^{-1} \beta_{\Psi}) = \beta_{\Psi}
$$
where the second identity follows from the self-intersection formula for $j$ \cite[Theorem 3.8]{Kock1}.
\end{proof}

\begin{definition}
For $\alpha, \, \beta \in K_{\bullet}^{G}(X) \otimes \mathbb{C}$, define
\begin{equation*}
\alpha \star_{\mathbb{T}} \beta: = \varphi^{-1}(\varphi(\alpha) \star_{\mathcal{E}_{\mathbb{T}}} \varphi(\beta)).
\end{equation*}
\end{definition}

\begin{theorem}
The product $\star_{\mathbb{T}}$ on $K_{\bullet}^G(X) \otimes \mathbb{C}$ is associative and graded commutative.
Moreover, the map 
$$
\mathfrak{ch} \circ \varphi: K_{\bullet}^G(X)\otimes \mathbb{C} \to \CH^*_{G}(I_G(X), \bullet) \otimes \mathbb{C} 
$$
is a ring isomorphism with respect to the product $\star_{\mathbb{T}}$ on the left and the product $\star_{c_{\mathbb{T}}}$ on the right. 
\end{theorem}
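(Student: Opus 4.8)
The plan is to deduce both assertions formally from material already at hand: the isomorphism $\varphi$ (with inverse $j_{*}\circ t^{-1}$), the $R(G)$-linearity of the orbifold product on $K^{G}_{\bullet}(I_{G}(X))$ (the Remark following Definition~\ref{def:OrbProdGeneral}), the finiteness of the decomposition \eqref{eq:factorization}, and the orbifold Chern character isomorphism of Theorem~\ref{thm:completion}.

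\emph{Step 1: well-definedness and the formal properties of $\star_{\mathbb{T}}$.} First I would check that $K^{G}_{\bullet}(I_{G}(X))_{m_{1}}=\operatorname{im}(\varphi)$ is closed under $\star_{\mathcal{E}_{\mathbb{T}}}$, so that the formula $\alpha\star_{\mathbb{T}}\beta:=\varphi^{-1}\bigl(\varphi(\alpha)\star_{\mathcal{E}_{\mathbb{T}}}\varphi(\beta)\bigr)$ actually makes sense. Since $\star_{\mathcal{E}_{\mathbb{T}}}$ commutes with the $R(G)\otimes\mathbb{C}$-action, for $r\in R(G)\otimes\mathbb{C}$, $\alpha\in K^{G}_{\bullet}(I_{G}(X))_{m_{\Psi}}$ and $\beta\in K^{G}_{\bullet}(I_{G}(X))_{m_{\Psi'}}$ one has $(r-\lambda)^{N}(\alpha\star_{\mathcal{E}_{\mathbb{T}}}\beta)=\bigl((r-\lambda)^{N}\alpha\bigr)\star_{\mathcal{E}_{\mathbb{T}}}\beta$ for a suitable scalar $\lambda$ and $N\gg 0$; hence $\alpha\star_{\mathcal{E}_{\mathbb{T}}}\beta$ lies in both the $m_{\Psi}$- and the $m_{\Psi'}$-primary part of the \emph{finite} decomposition \eqref{eq:factorization}, so it vanishes if $\Psi\neq\Psi'$ and remains in $K^{G}_{\bullet}(I_{G}(X))_{m_{\Psi}}$ if $\Psi=\Psi'$. (One must identify the summand ``$m_{1}$'' of $K^{G}_{\bullet}(I(\Psi))=K^{Z_{G}(g)}_{\bullet}(X^{g})$ occurring in the definition of $\varphi$ with the augmentation-primary part for the $R(G)$-action; this is immediate over $\mathbb{C}$, since $I_{G}\cdot R(Z_{G}(g))$ is $m_{1}$-primary.) Thus each $K^{G}_{\bullet}(I_{G}(X))_{m_{\Psi}}$ is a $\star_{\mathcal{E}_{\mathbb{T}}}$-subring, and $K^{G}_{\bullet}(I_{G}(X))_{m_{1}}$ is unital, its unit being the $m_{1}$-component of $[\mathcal{O}_{I_{G}(\{1\})}]$. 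Consequently $\star_{\mathbb{T}}$ is well defined and $\varphi$ is, by construction, a grading-preserving ring isomorphism onto $(K^{G}_{\bullet}(I_{G}(X))_{m_{1}},\star_{\mathcal{E}_{\mathbb{T}}})$; associativity, graded commutativity and the existence of a unit for $\star_{\mathbb{T}}$ then transport from Theorem~\ref{Associativity3} and Proposition~\ref{prop:Commutativity}.

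\emph{Step 2: the Chern character is a ring isomorphism on the $m_{1}$-part.} The orbifold Chern character $\mathfrak{ch}$ of Theorem~\ref{thm:completion} is a homomorphism for $\star_{\mathcal{E}_{\mathbb{T}}}$ and $\star_{c_{\mathbb{T}}}$; restricting it to the subring $K^{G}_{\bullet}(I_{G}(X))_{m_{1}}$ preserves this, so it remains to prove this restriction is bijective onto $\CH^{*}_{G}(I_{G}(X),\bullet)\otimes\mathbb{C}$. Here I would invoke non-abelian localization over $\mathbb{C}$ (Edidin--Graham \cite{EG2}, cf.\ Krishna \cite{Krishna1}): for an action with finite stabilizers the decomposition \eqref{eq:factorization} is finite, the augmentation ideal $I_{G}$ acts invertibly on every summand $K^{G}_{\bullet}(I_{G}(X))_{m_{\Psi}}$ with $\Psi\neq\{1\}$, and the remaining summand $K^{G}_{\bullet}(I_{G}(X))_{m_{1}}$ is already $I_{G}$-adically complete; hence the natural map identifies $K^{G}_{\bullet}(I_{G}(X))_{m_{1}}$ with $K^{G}_{\bullet}(I_{G}(X))^{\wedge}\otimes\mathbb{C}$, compatibly with $\mathfrak{ch}$. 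Under this identification $\mathfrak{ch}|_{K^{G}_{\bullet}(I_{G}(X))_{m_{1}}}$ is exactly the isomorphism \eqref{Orbifold Chern Completion}, so it is bijective.

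Combining the two steps, $\mathfrak{ch}\circ\varphi=\bigl(\mathfrak{ch}|_{K^{G}_{\bullet}(I_{G}(X))_{m_{1}}}\bigr)\circ\varphi$ is a composite of two ring isomorphisms with source $(K^{G}_{\bullet}(X)\otimes\mathbb{C},\star_{\mathbb{T}})$ and target $(\CH^{*}_{G}(I_{G}(X),\bullet)\otimes\mathbb{C},\star_{c_{\mathbb{T}}})$, which is the assertion. The one genuinely non-formal ingredient is the identification of the augmentation-primary summand with the $I_{G}$-adic completion over $\mathbb{C}$; I expect this, together with the attendant compatibility checks (finiteness of \eqref{eq:factorization}, so that projection onto the $m_{1}$-summand is a $\star_{\mathcal{E}_{\mathbb{T}}}$-ring map, and compatibility of $\mathfrak{ch}$ with the localization maps), to be the main point requiring care, everything else being bookkeeping around $R(G)$-linearity and the results already proved.
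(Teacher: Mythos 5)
Your proof is correct and follows essentially the same route as the paper, whose own argument simply notes that $\varphi$ is by construction an algebra isomorphism onto $\bigl(K^{G}_{\bullet}(I_G(X))_{m_1},\star_{\mathcal{E}_{\mathbb{T}}}\bigr)$, transports associativity and commutativity, and invokes Theorem \ref{thm:completion} for the Chern character statement. The two points you elaborate --- closure of the $m_1$-summand under $\star_{\mathcal{E}_{\mathbb{T}}}$ via $R(G)$-linearity and the finiteness of the decomposition, and the identification of that summand with the $I_G$-adic completion --- are precisely the steps the paper leaves implicit, and your treatment of them is sound.
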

\begin{proof}
By definition, the map $\varphi$ is an algebra isomorphism with respect to the orbifold products.
The first statement is a direct consequence of the commutativity and associativity of $\star_{\mathcal{E}_{\mathbb{T}}}$.
The second statement follows from Theorem \ref{thm:completion}.
\end{proof}

\section{Application: hyper-K\"ahler resolution conjectures}
The idea originates from theoretic physics. Based on considerations from topological string theory of orbifolds in \cite{MR818423} and \cite{MR851703}, one expects a strong relation between the cohomological invariants of an orbifold and those of its crepant resolution. Some first evidences are given in \cite{MR1672108}, \cite{MR1404917}, \cite{MR2027195}, \cite{MR2069013} on the orbifold Euler number and the orbifold Hodge numbers. Later Ruan put forth a much deeper conjectural picture, among which he has the following Cohomological Hyper-K\"ahler Resolution Conjecture in \cite{MR1941583}. We refer the reader to \cite{MR2234886}, \cite{MR2483931}, \cite{MR3112518} for more sophisticated versions.

	\begin{conj}[Ruan's Cohomological HRC]\label{conj:CHRC}
		Let $\mathcal{X}$ be a compact complex orbifold with underlying variety $X$ being
		Gorenstein. If there is a crepant resolution $Y\to X$ with $Y$ admitting a
		hyper-K\"ahler metric, then we have an isomorphism of graded commutative
		$\mathbb{C}$-algebras\,: $H^*(Y, \mathbb{C})\simeq H^*_{orb}(\mathcal{X},\mathbb{C})$.
	\end{conj}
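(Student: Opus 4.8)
The plan is to reduce the purely topological assertion to a motivic one, to prove the motivic statement by exhibiting a multiplicative algebraic correspondence, and to let the holomorphic symplectic hypothesis supply the numerical coincidences that force the correspondence to respect the orbifold product. I emphasize at the outset that in full generality this is an open problem, so what follows is a strategy together with an honest identification of where it breaks down.

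I would first dispose of the underlying graded-vector-space isomorphism, which is unconditional. When $\mathcal{X}$ has projective coarse space, $H^{*}_{\orb}(\mathcal{X},\mathbb{C})$ carries a pure Hodge structure whose Hodge numbers are the stringy Hodge numbers of $|\mathcal{X}|$; by the motivic-integration computations of Batyrev, Denef--Loeser and Yasuda, a crepant resolution $Y\to|\mathcal{X}|$ satisfies $h^{p,q}(Y)=h^{p,q}_{st}(|\mathcal{X}|)$ for all $p,q$. Thus $H^{*}(Y,\mathbb{C})$ and $H^{*}_{\orb}(\mathcal{X},\mathbb{C})$ are abstractly isomorphic as bigraded vector spaces, and the entire content of the conjecture is compatibility with cup products.

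Next I would pass to motives. In the case most directly accessible to the present paper, namely $\mathcal{X}=[X/G]$ a global quotient of a smooth projective variety by a linear algebraic group, the orbifold Chow ring and the orbifold motive $M_{\orb}(\mathcal{X})$ of \S\ref{subsect:OrbMot} are defined, and the cycle-class realization together with Proposition \ref{prop:MotiveImpliesChow} reduces the desired ring isomorphism to the construction of an isomorphism of commutative algebra objects $M(Y)\xrightarrow{\ \sim\ }M_{\orb}(\mathcal{X})$ in $\DM_{\mathbb{C}}$ (equivalently, via $\iota$, of $\h(Y)$ with $\h_{\orb}(\mathcal{X})$ when $G$ is finite). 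For a general compact complex orbifold one must moreover algebraize: argue that a Gorenstein orbifold admitting a projective hyper-K\"ahler crepant resolution is, possibly after deforming $Y$, of this quotient type. The candidate isomorphism is a correspondence $\Gamma\in\CH^{*}(Y\times\mathcal{I}_{\mathcal{X}})_{\mathbb{C}}$; in the quotient case it is assembled from the inertia components $X^{\mathbf{g}}$ and their twisted normal data, exactly as in the proofs of Theorem \ref{thm:main2}.

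Checking that $\Gamma$ is multiplicative unwinds, by the same projection-formula and excess-intersection manipulations used for the associativity of the orbifold products (Theorem \ref{Associativity1} and Theorem \ref{thm:completion}), to an identity on the triple-inertia loci relating the excess bundles $E_{1,2}$, $E_{2,3}$, the obstruction class $\mathbb{T}^{tw}$, and the corresponding data on $Y$. This is where the holomorphic symplectic hypothesis is essential: crepancy makes the age shifts interact with the symplectic pairing so that $\mathbb{T}^{tw}$ acquires exactly the rank prescribed by $Y$, and the symplectic form identifies the relevant normal bundles with their duals, collapsing the Todd and $\lambda_{-1}$ corrections on the orbifold side to honest Chern classes on $Y$. \textbf{The main obstacle}, and the reason the conjecture is still open in general, is the construction and control of $\Gamma$ itself: outside the Hilbert--Chow and surface-minimal-resolution cases of Theorem \ref{thm:main2} there is no general recipe producing such a correspondence, and even when one exists --- for instance from a derived McKay-type equivalence in the sense of Bridgeland--King--Reid or Kawamata --- computing its induced map on cohomology and proving it intertwines the stringy products requires global information about the exceptional locus that is not presently available. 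A realistic intermediate target, already attained by the methods of this paper, is the conjecture for all $Y$ deformation-equivalent to the known families resolved by the Hilbert--Chow morphism (Theorem \ref{thm:main2}); the general case would require a genuinely new input relating $D^{b}(Y)$ to $D^{b}(\mathcal{X})$ compatibly with the multiplicative structures.
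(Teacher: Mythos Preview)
The statement you were asked to prove is labeled a \emph{Conjecture} in the paper, and the paper makes no claim to prove it in general: it is presented as Ruan's open Cohomological Hyper-K\"ahler Resolution Conjecture, with the paper's contribution being the special cases collected in Theorem \ref{thm:main2}. So there is no proof in the paper to compare your attempt against, and your explicit acknowledgement that ``in full generality this is an open problem'' is exactly right.

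Given that, your write-up is a reasonable survey of the strategy and obstacles, and it is broadly consonant with the paper's own viewpoint: reduce to the motivic statement $(iv)$ of Conjecture \ref{conj:HRC+}, invoke Proposition \ref{prop:MotiveImpliesChow} to descend to cohomology, and note that the motivic statement is only known in the concrete families of Theorem \ref{thm:main2}. Your identification of the main obstacle---constructing and controlling a multiplicative correspondence $\Gamma$ outside the Hilbert--Chow and surface cases---is accurate. One caution: your proposed algebraization step (``a Gorenstein orbifold admitting a projective hyper-K\"ahler crepant resolution is, possibly after deforming $Y$, of this quotient type'') is itself a nontrivial and not obviously true claim, and the paper does not assert anything of the sort; Ruan's conjecture is stated for general compact complex orbifolds, not just global quotients, and the passage to the algebraic/quotient setting is part of what makes the general conjecture hard rather than a step one can simply invoke.
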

Here the right-hand side is the orbifold cohomology ring defined in \cite{MR1950941} and \cite{MR2104605}.
Conjecture \ref{conj:CHRC} being topological, we investigate in this section its refined counterpart in algebraic geometry, namely Conjecture \ref{conj:HRC} and its stronger version Conjecture \ref{conj:HRC+}. See Introduction for the precise statements.

\begin{lemma}\label{lemma:KChowEquiv}
In Conjecture \ref{conj:HRC}, $(ii)$ and $(iii)$ are equivalent. More generally, in Conjecture \ref{conj:HRC+}, $(ii)^{+}$ and $(iii)^{+}$ are equivalent. 
\end{lemma}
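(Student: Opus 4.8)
The plan is to transport any algebra isomorphism back and forth across the Chern character isomorphisms available on both sides, so that the lemma reduces to a diagram chase together with a little grading bookkeeping.

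Since $Y$ is a smooth projective variety, Theorem~\ref{Riemann--Roch1} provides a ring isomorphism
$$\mathbf{ch}_Y\colon K_\bullet(Y)_{\mathbb{C}}\ \xrightarrow{\sim}\ \bigoplus_{p,n}\CH^p(Y,n)_{\mathbb{C}}=\CH^*(Y,\bullet)_{\mathbb{C}}$$
respecting the homological ($\bullet$) grading, and by the orbifold (higher) Chern character of Theorem~\ref{thm:completion} (resp.\ Theorem~\ref{thm:EJKK}$(v)$ in the degree-zero case) there is a ring isomorphism
$$\mathfrak{ch}\colon K^{\orb}_\bullet(\mathcal{X})^{\wedge}_{\mathbb{C}}\ \xrightarrow{\sim}\ \CH^*_{\orb}(\mathcal{X},\bullet)_{\mathbb{C}},$$
again compatible with the $\bullet$-grading. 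Granting these, if $(iii)^{+}$ holds via a bigraded $\mathbb{C}$-algebra isomorphism $\psi\colon\CH^*(Y,\bullet)_{\mathbb{C}}\to\CH^*_{\orb}(\mathcal{X},\bullet)_{\mathbb{C}}$, then $\mathfrak{ch}^{-1}\circ\psi\circ\mathbf{ch}_Y$ realizes $(ii)^{+}$; conversely, conjugating an isomorphism of $(ii)^{+}$ by $\mathbf{ch}_Y$ and $\mathfrak{ch}$ produces $(iii)^{+}$. The equivalence of $(ii)$ and $(iii)$ in Conjecture~\ref{conj:HRC} is the special case obtained by restricting to the degree-zero ($n=0$) subalgebras, using Theorem~\ref{thm:EJKK}$(v)$ and the classical Chern character of $Y$ in place of their higher counterparts.

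The only point that requires care — and the main (minor) obstacle — is the bookkeeping for the codimension index $p$. Neither $\mathbf{ch}_Y$ (because of the exponential defining the Chern character) nor $\mathfrak{ch}$ (because of the Todd correction factor $\mathbf{td}(-L(\Psi)(\mathbb{T}))$ in its definition) is homogeneous for the $p$-grading; each preserves only the decreasing codimension filtration, with the identity as leading term of the correction. Hence a naive conjugation yields a filtered, rather than bigraded, isomorphism. To remedy this one passes to the associated graded of the codimension filtration — equivalently, one uses the canonical Adams (weight) eigenspace decomposition of $K_\bullet(Y)_{\mathbb{C}}$ and of $K^{\orb}_\bullet(\mathcal{X})^{\wedge}_{\mathbb{C}}$, under which the Chern characters induce the strictly bigraded isomorphisms recorded in the remark following Theorem~\ref{Riemann--Roch1}. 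Transporting isomorphisms through these strictly bigraded versions then gives genuine (bi)graded algebra isomorphisms and completes the argument; this is also the step where the hypothesis of complex (rather than integral) coefficients is indispensable, since only rationally are the Chern character maps isomorphisms.
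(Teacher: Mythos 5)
Your proposal is correct and takes essentially the same route as the paper's own proof: both directions are obtained by conjugating the hypothesized isomorphism through the higher Chern character isomorphism for $Y$ (Theorem \ref{Riemann--Roch1}) and the orbifold higher Chern character isomorphism $K_{\bullet}^{\orb}(\mathcal{X})^{\wedge}\xrightarrow{\simeq}\CH^{*}_{\orb}(\mathcal{X},\bullet)$ of Theorem \ref{thm:main1}$(v)$ (Theorem \ref{thm:completion}), the degree-zero case being delegated to \cite{MHRCKummer}. The paper's proof is terser---it simply ``combines the two isomorphisms''---and does not enter into the codimension-grading bookkeeping you raise; that extra discussion is a reasonable refinement, though note that your fix via the Adams decomposition would still require the given isomorphism to respect the relevant filtration, a point the paper likewise leaves implicit.
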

\begin{proof}
The equivalence of $(ii)$ and $(iii)$ can be found in \cite[Proof of Theorem 1.8]{MHRCKummer}. We only show the equivalence of $(ii)^{+}$ and $(iii)^{+}$ here.
By Theorem \ref{thm:main1} $(v)$, there exists an orbifold (higher) Chern character map, which is an isomorphism of algebras from the completion of orbifold algebraic K-theory to the orbifold higher Chow ring:
$$
\mathfrak{ch}: K_\bullet^{G}(I_G(X))^{\wedge}=K_{\bullet}^{\orb}(\mathcal{X})^{\wedge} \xrightarrow{\simeq}\CH^*_{G}(I_G(X), \bullet)=\CH^{*}_{\orb}(\mathcal{X}, \bullet).
$$
On the other hand, by Theorem \ref{Riemann--Roch1}, we have an isomorphism of algebras 
$$\mathfrak{ch}: K_{\bullet}(Y)\xrightarrow{\simeq} \CH^{*}(Y, \bullet).$$
Combining these two isomorphisms, we see the equivalence between $(ii)^{+}$ and $(iii)^{+}$.
\end{proof}

\begin{proposition}\label{prop:MotiveImpliesChow}
In Conjecture \ref{conj:HRC+}, $(iv)$ implies $(iii)^{+}$, hence also $(ii)^{+}$.
\end{proposition}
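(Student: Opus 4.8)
The plan is to deduce $(iii)^{+}$ from $(iv)$ by realizing the motivic isomorphism in (higher) Chow groups, exactly as the implication $(iv)\Rightarrow(iii)$ is obtained in \cite{MHRCKummer} for ordinary Chow groups. The mechanism is the representability of rational higher Chow groups in $\DM$: for a smooth projective variety $W$ — and, via the formalism of motives of quotient stacks used in \S\ref{subsect:OrbMot}, cf.\ \cite{HPL}, applied componentwise — for the smooth proper Deligne--Mumford stack $\mathcal{I}_{\mathcal{X}}=[I_{G}(X)/G]$, there are natural isomorphisms identifying $\CH^{*}(W,\bullet)_{\mathbb{Q}}$ (resp.\ $\CH^{*}_{G}(I_{G}(X),\bullet)_{\mathbb{Q}}=\CH^{*}_{\orb}(\mathcal{X},\bullet)_{\mathbb{Q}}$, up to the age relabelling of the codimension grading) with a direct sum of groups $\Hom_{\DM_{\mathbb{Q}}}(\mathbf{1}(j)[2j+n],M(W))$, compatibly with l.c.i.\ pull-backs, proper push-forwards and products, by the comparison between higher Chow groups and motivic cohomology \cite{Voevodsky1}, \cite{MVW}. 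Consequently the assignment
\[ N\ \longmapsto\ \bigoplus_{j,n}\Hom_{\DM_{\mathbb{C}}}\bigl(\mathbf{1}(j)[2j+n],\,N\bigr) \]
is a covariant, lax symmetric monoidal functor from $\DM_{\mathbb{C}}$ to bigraded $\mathbb{C}$-vector spaces; it carries a commutative algebra object to a bigraded commutative $\mathbb{C}$-algebra.

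First I would identify the value of this functor on both sides of $(iv)$. On $M(Y)$, equipped with the algebra structure induced by the small diagonal of $Y$ (the structure used to form $M_{\orb}(\mathcal{X})$ in \cite{MHRCKummer}), the functor returns $\CH^{*}(Y,\bullet)_{\mathbb{C}}$ with its ordinary intersection product; this is the higher analogue of the classical fact that the small-diagonal correspondence computes the cup product, and it follows from the projection formula together with the compatibility of the realization with pull-back and proper push-forward. On $M_{\orb}(\mathcal{X})=M(\mathcal{I}_{\mathcal{X}})$, whose algebra structure is, by \S\ref{subsect:OrbMot}, given by the correspondence $(e_{1},e_{2},\mu)_{*}c_{top}(\mathbb{T}^{tw})\in\CH^{*}(\mathcal{I}_{\mathcal{X}}\times\mathcal{I}_{\mathcal{X}}\times\mathcal{I}_{\mathcal{X}})$, the same ``a correspondence acts by pull--push'' computation shows that the induced multiplication on $\CH^{*}_{G}(I_{G}(X),\bullet)_{\mathbb{C}}$ is $\alpha\cdot\beta=\mu_{*}\bigl(e_{1}^{*}\alpha\cup e_{2}^{*}\beta\cup c_{top}(\mathbb{T}^{tw})\bigr)$, which is precisely the orbifold product $\star_{c_{\mathbb{T}}}$ of Definition \ref{def:OrbProdGeneral}. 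Applying the functor to the algebra isomorphism $M(Y)\xrightarrow{\ \sim\ }M_{\orb}(\mathcal{X})$ of $(iv)$ therefore produces an isomorphism of bigraded $\mathbb{C}$-algebras $\CH^{*}(Y,\bullet)_{\mathbb{C}}\simeq\CH^{*}_{\orb}(\mathcal{X},\bullet)_{\mathbb{C}}$, which is $(iii)^{+}$; the further implication $(iii)^{+}\Rightarrow(ii)^{+}$ is then Lemma \ref{lemma:KChowEquiv}.

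The main obstacle I anticipate is bookkeeping rather than anything conceptual: one must make the dictionary above precise, i.e.\ verify that the motive $M(\mathcal{I}_{\mathcal{X}})$ defined through the quotient-stack formalism of \cite{HPL} genuinely represents the Edidin--Graham equivariant higher Chow groups $\CH^{*}_{G}(I_{G}(X),\bullet)$, that this representability is natural for the finite l.c.i.\ maps $e_{1},e_{2}$ and the proper map $\mu$, and that under this identification composition of correspondences in $\DM_{\mathbb{C}}$ reproduces the pull--push formula defining $\star_{c_{\mathbb{T}}}$. Every analytic ingredient required (homotopy invariance, l.c.i.\ pull-back, proper push-forward, the projection formula, and external and intersection products for equivariant higher Chow groups) has been assembled in \S\ref{subsect:EquiChow}, so the argument is the one given in \cite{MHRCKummer} for Chow groups run verbatim with higher Chow groups throughout; in particular, as there, the hyper-K\"ahler and crepant-resolution hypotheses of Conjecture \ref{conj:HRC+} play no role in this implication.
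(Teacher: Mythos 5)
Your argument is correct and is essentially the paper's own proof: both deduce $(iii)^{+}$ by applying to the algebra isomorphism $M(Y)\simeq M_{\orb}(\mathcal{X})$ of $(iv)$ a Hom-functor in $\DM_{\mathbb{C}}$ that computes rational higher Chow groups, and then invoke Lemma \ref{lemma:KChowEquiv} for $(ii)^{+}$. The only (cosmetic, by duality for smooth proper motives) difference is that you use the covariant functor $\Hom(\mathbf{1}(j)[2j+n],-)$ where the paper uses $\Hom(-,\mathbb{C}(*)[2*-\bullet])$; your version also spells out the compatibility checks (that the small diagonal yields the intersection product on $Y$ and that $(e_1,e_2,\mu)_*c_{top}(\mathbb{T}^{tw})$ yields $\star_{c_{\mathbb{T}}}$) which the paper absorbs into the definition of $M_{\orb}(\mathcal{X})$ in \S\ref{subsect:OrbMot}.
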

\begin{proof}
The following general fact follows in a straight-forward way from the definition: let $M$ and $N$ be two commutative algebra objects in the category $\DM:=\DM(\mathbb{C})_{\mathbb{C}}$ of mixed motives (over complex numbers) with complex coefficients, if $M$ and $N$ are isomorphic as algebra objects, then we have an isomorphism of bigraded algebras $$\Hom_{\DM}\left(M, \mathbb{C}(*)[2*-\bullet]\right)\simeq \Hom_{\DM}\left(N, \mathbb{C}(*)[2*-\bullet]\right)$$
where $\mathbb{C}(*)[\bullet]$ is the motivic complex defining motivic cohomology with complex coefficients. Now it suffices to apply this statement to $M=M_{\orb}(\mathcal{X})$ and $N=\iota\left(\h(Y)\right)=M(Y)$, where $\iota: \CHM_{\mathbb{C}}^{op}\hookrightarrow \DM$ is the fully faithful embedding tensor functor  \cite{Voevodsky2}.
\end{proof}

Therefore, in some sense, among the various hyper-K\"ahler resolution conjectures, the motivic one is the strongest and most fundamental. Invoking the series of works \cite{MHRCKummer}, \cite{MHRCK3} and \cite{McKaySurface}, we deduce Theorem \ref{thm:main2}:
\begin{proof}[Proof of Theorem \ref{thm:main2}]
As the motivic hyper-K\"ahler resolution conjecture is proved in all the cases in the statement by \cite{MHRCKummer}, \cite{MHRCK3} and \cite{McKaySurface} (note that isomorphic algebra objects in $\CHM_{\mathbb{C}}$ are isomorphic algebra objects in $\DM_{\mathbb C}$), Proposition \ref{prop:MotiveImpliesChow}  implies that the other hyper-K\"ahler resolution conjectures also hold in these cases.
\end{proof}

\bibliographystyle{amsplain}

\Addresses
\end{document}